\address{Graduate School of Mathematics,
Nagoya University, 464-8602 Furo-cho, Chikusa, Nagoya, Japan} \email{ohta@math.nagoya-u.ac.jp}
\address{Graduate School of Mathematics,
Nagoya University, 464-8602 Furo-cho, Chikusa, Nagoya, Japan\\
Current Address: Department of Mathematics, Kyoto University, 606-8502 Kitashirakawa Oiwake-cho, Sakyo, Kyoto, Japan}
\email{sanda.fumihiko.5s@kyoto-u.ac.jp}
  \theoremstyle{theorem} 
   \newtheorem{theorem}{Theorem}[section]
  \newtheorem{lemma}[theorem]{Lemma}
  \newtheorem{proposition}[theorem]{Proposition}
  \newtheorem{corollary}[theorem]{Corollary}
  \theoremstyle{definition}
  \newtheorem{remark}[theorem]{Remark} 
   \newtheorem{definition}[theorem]{Definition}
  \newtheorem{example}[theorem]{Example}
 \newcommand{\ainf}{A_\infty}
 \newcommand{\A}{\mathscr{A}}
 \newcommand{\C}{\mathbb{C}}
 \newcommand{\N}{\mathbb{N}}
 \newcommand{\LL}{\mathcal{L}}
 \newcommand{\PP}{\mathbb{P}}
 \newcommand{\Q}{\mathbb{Q}}
 \newcommand{\R}{\mathbb{R}}
 \newcommand{\X}{\mathbb{X}}
 \newcommand{\Z}{\mathbb{Z}}
 \newcommand{\io}{I}
 \newcommand{\LI}{L_\infty}
 \newcommand{\e}{\mathrm{e}}
 \newcommand{\ke}{\mathbb{K}}
 \newcommand{\ob}{\mathrm{Ob}}
 \newcommand{\Hom}{\mathrm{Hom}}
 \newcommand{\im}{\mathrm{Im}}
 \newcommand{\id}{\mathrm{id}}
 \newcommand{\End}{\mathop{\mathrm{End}}\nolimits}
 \newcommand{\der}{\mathrm{Der}}
 \newcommand{\sh}{\mathrm{Sh}}
 \newcommand{\ggm}{\mathrm{G}}
 \newcommand{\eul}{\mathrm{E}}
 \newcommand{\dz}{\frac{d}{dz}}
 \newcommand{\nove}{\Lambda^e}
 \newcommand{\wt}[1]{\widetilde{#1}}
 \newcommand{\no}[1]{\|#1\|}
 \newcommand{\ilim}[1][]{\mathop{\varprojlim}\limits_{#1}}
 \newcommand{\relmiddle}[1]{\mathrel{}\middle#1\mathrel{}}
 \newcommand{\hodot}{\widehat{\odot}}
 \newcommand{\hotimes}{\widehat{\otimes}}
 \newcommand{\hoplus}{\widehat{\oplus}}
 \newcommand{\hell}{\widehat{\ell}}
 \newcommand{\hoc}{CC^\bullet(\A)}
 \newcommand{\hoh}{CC_\bullet(\A)}
\newcommand{\rhoc}{\overline{CC}^\bullet(\A)}
 \newcommand{\rhoh}{\overline{CC}_\bullet(\A)}
 \newcommand{\vin}{\mathit{in}}
\newcommand{\vout}{\mathit{out}}
\begin{document}
  \title{Meromorphic connections in filtered $A_{\infty}$ categories}
  \dedicatory{Dedicated to Professor Kyoji Saito on his 75th birthday}
  \author{Hiroshi Ohta  
  and Fumihiko Sanda} 
  \date{\today}
 \thanks{The first named author is supported partially by JSPS Grant-in-Aid
for Scientific Research (A) No 15H02054, and the second named author is supported partially by JSPS Grant-in-Aid
for Scientific Research (A) No 15H02054 and for Young Scientists (B) No 17K17817 (PI. A. Kanazawa).}
\begin{abstract} 
In this note, introducing notions of CH module, CH morphism and CH connection, 
we define a meromorphic connection in the ``$z$-direction'' on periodic cyclic homology of an $\ainf$ category as a connection on cohomology of a CH module. Moreover, we study and clarify compatibility of our meromorphic connections under a CH module morphism preserving CH connections at chain level. Our motivation comes from symplectic geometry. 
The formulation given in this note designs to fit algebraic properties of open-closed maps in symplectic geometry.

\end{abstract}
\maketitle
   \tableofcontents
\section{Introduction}\label{Intro}
In this note we introduce and study a certain meromorphic connection on periodic cyclic homology of 
a filtered $\ainf$ category. 
In \cite{getcar} Getzler introduced the Gauss-Manin connection on periodic cyclic homology 
of an $\ainf$ algebra, which we call the  {\it Getzler-Gauss-Manin connection} in this note. 
He constructed the connection at chain level and showed that the curvature is chain homotopic to zero.
His connection does not involve derivative of the `$z$-direction'. Here the parameter $z$ 
is the auxiliary variable in the cyclic homology, which is denoted by $-u$ in \cite{getcar}.
In this article we will incorporate the derivative of the `$z$-direction' with the connection.  
This is necessary and important when we study the relationship between the Fukaya category 
for a general, not necessary Calabi-Yau, symplectic manifold and 
Kyoji Saito's flat structure \cite{saiper}. 
On the other hand, Katzarkov-Kontsevich-Pantev \cite{kkp1} introduced 
a connection on periodic cyclic homology which contains the derivative of the `$z$-direction'. 
Although the definition of our connection is inspired from their definition and  
it looks similar to theirs, they are essentially different as we will explain later
in Remark \ref{rmk:KKP}.
\par

Our motivation comes from symplectic geometry. 
We briefly and informally recall the symplectic geometric background to explain our motivation and aim of this article, though  
these geometric contents are not used in this note.
Let $(X, \omega)$ be a closed symplectic manifold. 
We do {\it not} assume $c_1(X) =0$ here.
Let 
\begin{equation}\label{eq:nov}
\Lambda_0 = \left.\left\{\sum_{i=0}^{\infty} a_i T^{\lambda_i} ~\right\vert~ a_i \in \C, \lambda_i \in \R_{\ge 0}, \lambda_i \to \infty \right\}
\end{equation}
be (a version of) the Novikov ring. 
We choose and fix a homogeneous basis $T_0=1, T_1, \dots , T_m$ of $H^{\ast}(X;\Lambda_0)$.
(For simplicity, we assume $\deg T_i$ are even.)
Let ${\bf t} = \sum_{i=0}^m t_i T_i \in H^{\ast}(X;\Lambda_0)$ and denote by $\ast_{\bf t}$ the quantum product
on $H^{\ast}(X;\Lambda_0)$  
defined by 
$$
(a\ast_{\bf t}b,c)_{PD_X} = \sum_{\alpha \in H_2(X;\Z)} \sum_{n=0}^{\infty} \frac{1}{n!}
GW_{0,\alpha, n+3}^X (\underbrace{{\bf t}, \dots , {\bf t}}_n, a,b,c)T^{\omega(\alpha)}.
$$
Here 
$(\cdot,\cdot)_{PD_X}$ denotes the Poincar\'e paring on $H^{\ast}(X;\Lambda_0)$ and 
$GW_{0,\alpha, n+3}^X$ is 
(the $\Lambda_0$ linear extension of) the genus zero $(n+3)$ points Gromov-Witten invariant of $X$ with 
class $\alpha$. 
Dubrovin introduced a meromorphic flat connection $\nabla^{\rm D}$ 
called {\it Dubrovin's quantum connection} on the trivial $H^{\ast}(X;\Lambda_0)$-bundle over 
$H^{\ast}(X;\Lambda_0)\times \PP^1$ 
satisfying 
$$
\begin{aligned}
\nabla^{\rm D}_{\frac{\partial}{\partial t_i}} & = \frac{\partial}{\partial t_i} + 
\frac{1}{z} (T_i \ast_{{\bf t}}), \\
\nabla^{\rm D}_{z\frac{\partial}{\partial z}} & = z\frac{\partial}{\partial z} -\frac{1}{z}
(E \ast_{{\bf t}}) +\mu,
\end{aligned}
$$
where $z$ is the parameter sitting in $\PP^1$, $E=c_1(X) + \sum_{i=0}^m (1-\frac{\deg T_i}{2})t_iT_i$ and $\mu \in \End H^{\ast}(X;\Lambda_0)$ defined by 
$\mu(a)= \frac{1}{2}(\deg a -\frac{\dim X}{2})a$ for $a \in H^{\ast}(X;\Lambda_0)$.
See, e.g., \cite[Lecture 3]{dubgeo} and compare Example \ref{5.10} below.
\par
On the other hand, Lagrangian intersection Floer theory on $X$ provides a filtered $A_{\infty}$ category, 
called the {\it Fukaya category}, denoted by ${\rm Fuk} (X)$. 
The {\it open-closed map} $\frak p$ introduced in \cite{fooo1} is extended in \cite{afooo} to 
a $QH^{\ast}(X)$-module homomorphism 
\begin{equation}\label{eq:ocmap}
\widehat{\frak p} ~:~ HH_{\ast}({\rm Fuk} (X);\Lambda_0) \to QH^{\ast}(X;\Lambda_0)
\end{equation}
from the Hochschild homology of the Fukaya category. Here $QH^{\ast}(X;\Lambda_0)$ denotes 
the quantum cohomology ring of $X$ with coefficients in $\Lambda_0$.
If we use the de Rham model, 
then ${\rm Fuk} (X)$ has a structure of 
cyclic filtered $A_{\infty}$ category. (See \cite{fukcyc}, \cite{fukuno}, \cite{afooo}).
The open-closed map $\widehat{\frak p}$ will be lifted to a cyclic open-closed map 
from the the cyclic homology 
of ${\rm Fuk} (X)$ to the ($S^1$-equivariant) quantum cohomology:
\begin{equation}\label{eq:cocmap}
\widehat{c\frak p} ~:~ CH_{\ast}({\rm Fuk} (X);\Lambda_0[[z]]) \to QH^{\ast}(X;\Lambda_0 [[z]]).
\end{equation}
Then general algebraic theory on the Hochschild chain/cochain of an $A_{\infty}$ category tells us 
that the Hochschild chain $CC_{\ast}({\rm Fuk} (X);\Lambda_0[[z]])$
has a structure of DGLA module over the Hochschild cochain 
$CC^{\ast}({\rm Fuk} (X);\Lambda_0[[z]])$
which has a structure of DGLA. 
Moreover it is known that $QH^{\ast}(X;\Lambda_0 [[z]])$ has a structure of hypercommutative algebra
in the sense of \cite{getope} (see Example \ref{hypcom} for the definition).
Furthermore, we note that 
these maps and categories are bulk-deformed by elements of 
$H^{\ast}(X;\Lambda_0)$. (See \cite{fooo1}, \cite{foootoric2} for more detailed discussion on bulk-deformations.)
In this situation, we like to construct a meromorphic connection on periodic cyclic homology of 
${\rm Fuk} (X)$ over $H^{\ast}(X;\Lambda_0)\times \PP^1$ which is compatible with  
Dubrovin's quantum connection $\nabla^{\rm D}$ under the cyclic open-closed map $\widehat{c\frak p}$.
This is our motivation. In this article we do not touch the geometric part of the cyclic open-closed map. 
The purpose of this article is to extract and describe the properties that the cyclic open-closed map 
is supposed to 
have in terms of purely algebraic language, and to introduce a meromorphic connection containing the derivative of the $z$-direction as well, and to prove that the connection is compatible with  
Dubrovin's quantum connection under the cyclic open-closed map in our algebraic formulation. 
In this sense 
our formulation provides an algebraic counterpart of the cyclic open-closed map in Lagrangian Floer theory
together with our meromorphic connection, 
and describes the relation of our meromorphic connection in the Fukaya $\ainf$ category to Dubrovin's quantum connection in quantum cohomology via the cyclic open-closed map. 
See Proposition \ref{prop5.21}, Corollary \ref{cor5.22} and Theorem \ref{thm:68}, 
and also Remark \ref{rem:varComp} for the variants.
The unitarity of the $A_{\infty}$ category is used in 
Section \ref{sec:CH}. We also note that there is a related result on compatibility of  
the Getzler-Gauss-Manin connection under a certain $L_{\infty}$ morphism of modules 
in deformation quantization \cite[Proposition 1.4]{catcha}.
\par
In this note a key ingredient to incorporate the derivative of the $z$-direction 
with the Getzler-Gauss-Manin connection on periodic cyclic homology of a filtered $A_{\infty}$ category
is to consider a $\Z$-grading. To encode a $\Z$-grading structure in the Fukaya $A_{\infty}$ category 
${\rm Fuk} (X)$\footnote{More precisely, we consider anchored/graded Lagrangian submanifolds as objects. See \cite{foooanch}, \cite{gradL} for more details.}
we will use the universal Novikov ring $\Lambda_0^e$ (see Example \ref{4.3} (1) for this notation) 
by adding one formal variable $e$ of degree $2$ instead of $\Lambda_0$ in \eqref{eq:nov}.
This formal variable $e$ was originally used in the universal Novikov ring in 
Lagrangian Floer theory \cite{fooo00}, \cite{fooo1} in order to encode the Maslov index of holomorphic disks, while $\Lambda_0$ or its quotient field $\Lambda$ without the formal variable $e$ 
are frequently used in recent literatures,  for example \cite{foootoric1}, \cite{foootoricmirror}, 
where only $\Z/2\Z$-grading is considered.
By using the $\Z$-grading structure we define a meromorphic connection called an {\it Euler connection}  
in Definition \ref{def:Eu},   
which incorporates the derivative of the $z$-direction with the Getzler-Gauss-Manin connection. 
\par
To establish the compatibility of our Euler connection 
with Dubrovin's quantum connection under the cyclic open-closed map 
at cohomology level, we need argument at chain level. 
In an algebraic aspect, we will study certain algebraic structure keeping track of information of chain homotopy at certain depth. 
For this purpose we introduce the notion of {\it CH module} over an $\LI$ algebra in Definition \ref{def:CH},
where `CH' stands for {\it Cartan homotopy}. It is a cousin of the notion of calculus algebra in 
\cite{dolnon}, \cite{tamnon}.
(We note that the Cartan homotopy formula already played an important role in Getzler's paper \cite{getcar} to  
construct the Getzler-Gauss-Manin connection on periodic cyclic homology.) 
Actually we will see that the structure of calculus algebra 
gives a typical example of our CH module structure and 
a hpercommutative algebra mentioned above also provides a typical example of the CH module structure.
(See Examples \ref{ex:cal}, \ref{hypcom}.) 
In the language of CH-module and CH-morphism we define,  
we formulate and study the compatibility of our Euler connections  
under a CH-morphism in Proposition \ref{prop5.21} and Corollary \ref{cor5.22}, 
and apply these results to the situation arising from an $A_{\infty}$ category in Section \ref{sec:CH}.

\begin{remark}\label{rmk:KKP}
Our connection formally looks similar to one in \cite[p.108]{kkp1} by replacing our variables $z,e$ by their variables $u,t$ respectively. In fact, the variable $z$ plays the same role as the variable $u$.
However there are indeed differences in the following points.  
Katzarkov-Kontsevich-Pantev consider a $t\in {\mathbb A}_{\C}\setminus \{0\}$-parametrized family of 
$\Z /2\Z$-graded (DG) algebras (see \cite{calcat} for its $A_{\infty}$ version), while we consider a $\Z$-graded family of (DG) algebras.
Their variable $t$ stands for a formal abstract parameter and 
does not have a non-trivial grading (but they consider a `weight' instead) because of the property $d_{{\mathcal A}_t} = t \cdot d_{A}$ etc,
described in \cite[p.108]{kkp1}. On the other hand, our variable $e$ 
appears in more primitive way and 
has a geometric meaning in Lagrangian Floer theory as mentioned above,  
which naturally involves a $\Z$-grading. 
Furthermore, since our $\Z$-graded family of (DG) algebras is different form one they consider, 
the Getzler-Gauss-Manin connections associated to these families are, at least a priori, different.  
\end{remark}

\begin{remark}\label{rmk:GPSS}
There are related works, e.g. in \cite{ganmir}, \cite{seicon} motivated also by symplectic geometry.
In \cite[Section 4.2]{ganmir} Ganatra-Perutz-Sheridan claims the compatibility of 
the Getzler-Gauss-Manin connection and Dubrovin's quantum connection under the cyclic 
open-closed map. Their Geztler-Gauss-Manin connection does not involve the derivative of 
the $z$-direction. However they consider the Calabi-Yau case. So it is actually not necessary for the  situation they study. 
Moreover, we note that a $\Z$-grading structure naturally appears in their case.
\par
In \cite{seicon} Seidel studies a similar quantum connection on equivariant Hamilton Floer homology 
and its compatibility with Dubrovin's quantum connection under the PSS map. 
\par
\end{remark}

The outline of this paper is in order.
Since we borrow Tygan's reformulation \cite{tsygau}, following Barannikov's idea sketched in \cite{barqua}, of the Getzler-Gauss-Manin connection
in terms of $L_{\infty}$ algebra and $L_{\infty}$ module, we start with recalling 
basic definitions concerning $L_{\infty}$ algebra and $L_{\infty}$ module in Section \ref{slinf}.
In Section \ref{sect:CH} we introduce the notion of CH module over an $L_{\infty}$ 
algebra $L$ and CH module morphism.
For a given $L_{\infty}$ algebra $L$ over a graded algebra $R$, we first enhance an $L_{\infty}$ algebra structure on $\widetilde{L}:=L[[z]]\oplus \epsilon L[[z]]$ with new formal variables $z$ and 
$\epsilon$ of $\deg z =2$ and $\deg \epsilon =1$, which can be regarded as a mapping cone of the morphism 
$z \cdot{\rm id} \in {\rm End }_{R[[z]]} (L[[z]])$. 
Then a {\it CH module structure} on a graded $R[[z]]$ module $\widetilde{M}$ over the $\LI$ algebra $L$ 
is defined as 
an `$\epsilon^2$-truncated' $L_{\infty}$ module structure on $\wt{M}$ over the cone $\widetilde{L}$. 
This structure includes information of chain homotopy at certain depth which will be used in later argument at chain level. On the other hand, as we mentioned above, our motivation comes from symplectic geometry, where 
the symplectic energy plays an important role. To encodes the symplectic energy
we will use the Novikov ring/field as coefficients on which the symplectic energy induces a valuation or norm. Thus in Section \ref{sec:norm} 
we define the normed (filtered) version of the notion of CH module. 
In Section \ref{sec:conn}, after defining the notion of {\it CH connection} on a CH module 
$\widetilde{M}$ over $L$, we define the Getzler-Gauss-Manin connection on 
$\widetilde{M}$ for each CH connection and Maurer-Cartan element of the $L_{\infty}$ algebra $L$ in Definition \ref{defggm}. In Subsection \ref{subeu}, using the grading operator and the Euler vector field, we incorporate the derivative of the $z$-direction with the Getzler-Gauss-Manin connection defined above. We call the resulted connection an {\it Euler connection} on $\widetilde{M}$. 
Then for any CH module morphism which intertwines CH connections we show compatibility of  
the Euler connections under the CH module morphism (Proposition \ref{prop5.21}, Corollary \ref{cor5.22}). 
Now in Section \ref{sec:CH}, we study the situation of the Hochschild chain/cochain of 
an $A_{\infty}$ category $\A$. It is known that 
the shifted (reduced) Hochschild cochain complex $\rhoc[1]$ has a structure of DGLA and 
the shifted (reduced) Hochschild chain complex $\rhoh[1]$ has a structure of DGLA module 
over $\rhoc[1]$. Moreover we show in Theorem \ref{thm:68} that 
the reduced Hochschild chain $\wt{M}:=\rhoh[1][[z]]$ has a CH module structure over  
the DGLA $L:=\rhoc[1]$. 
Therefore we can apply the story developed in up to Section \ref{sec:conn} to this situation.
Thus this derives a Getzler-Gauss-Manin connection together with derivative of the $z$-direction 
(an Euler connection) on the periodic cyclic homology of an $A_{\infty}$ category.  
Finally in Section \ref{prim},  we recall the definition of a part of primitive forms and briefly explain  relations to filtered $\ainf$ categories and Euler connections.

\par\medskip\noindent
{\bf Acknowledgment:} 
The first named author would like to thank Kyoji Saito, 
the organizer of the workshop `Primitive form' 
held at Kavli IPMU in 2014, and speakers, Si Li \cite{saipri}, Kenji Fukaya and Atsushi Takahashi 
\cite{takcal} whose talks partially encouraged him to do the research in this article.  
Since around 2008, he had several seminars with K.Saito and A.Takahashi 
on primitive forms and related things. During the seminars, 
Takahashi's comments drew his attention to importance of $\Z$-grading structure and 
the notion of calculus algebra \cite{dolfor}.

\section{Preliminaries on $\LI$ algebras and $\LI$ modules}\label{slinf}
In this section, we recall some definitions related to $\LI$ algebras (e.g. \cite{ladint}).
Let $R=\oplus_{k\in \Z} R^k$ be a $(\Z-)$graded ring.
Throughout this paper,
we assume that rings are (graded) commutative and contains $\Q$, i.e., $n \cdot 1$ is invertible for each $n \in \Z \setminus\{0\}$.
By graded $R$ modules, we mean left graded $R$ modules, which are naturally considered as graded $R$ bimodules. 
Usually, elements of graded modules are assumed to be homogeneous.

For graded modules, the degree of homogeneous elements are denoted by $|\cdot|$ and 
set $|\cdot|':=|\cdot|-1.$
For a graded $R$ module $V$, the one shift $V[1]$ is defined by $V[1]^k:=V^{k+1}$.
Let $s : V \to V[1]$ be the\\`` identity map'' (degree $-1$). 
The graded $R$ module structure of $V[1]$ is defined by $rsv:=(-1)^{|r|}srv$.
To simplify notation, we use the same letter $v$ for $sv$.

For graded $R$-modules $V, W,$ the graded tensor product over $R$ is denoted by $V \otimes W.$
The symmetric group $S_k$ of degree $k$ naturally acts on $V^{\otimes k}.$ 
The coinvariant of this action is called the graded symmetric tensor product and denoted by 
$V^{\odot k}.$
An element $\sigma \in S_k$ is called an $(i_1, i_2, \dots, i_l)$-shuffle if $i_1+\cdots+i_l=k$ and 
$$
\sigma(1) < \sigma(2) < \cdots <\sigma(i_1), \dots, \sigma(i_1+\cdots+i_{l-1}+1) < \cdots <\sigma(k).
$$
The set of $(i_1,i_2,\dots,i_l)$-shuffles is denoted by $\sh(i_1, i_2, \dots, i_l)$.
For $x_1, \dots, x_k \in V$ and $\sigma \in S_k,$ the Koszul sign $\epsilon(\sigma)$ is defined by 
\[x_1\odot \cdots \odot x_k=\epsilon(\sigma)x_{\sigma(1)} \odot \cdots \odot x_{\sigma(k)}.\] 
Note that $\epsilon(\sigma)$ depends on $x_1, \dots, x_k$.

Set $E_kV:=V[1]^{\odot k}$ and $EV:=\oplus_{0 \le k} E_kV$.
We define a product $EV \otimes EV \to EV $ and a coproduct $\Delta : EV \to EV \otimes EV$ as follows:
\begin{align*}
 &(y_1 \odot \cdots \odot y_k) \cdot (y'_1 \odot \cdots \odot y'_l)=
   y_1 \odot \cdots \odot y_k \odot y'_1 \odot \cdots \odot y'_l, \\
 &\Delta(y_1\odot \cdots \odot y_k)=\sum_{\substack{p+q=k \\ 0 \le p \le k}} 
   \sum_{\sigma \in \sh(p,q)} \epsilon(\sigma) y_{\sigma(1)}\odot \cdots \odot y_{\sigma(p)}
   \otimes y_{\sigma(p+1)}\odot \cdots \odot y_{\sigma(k)},
\end{align*}
where $\epsilon(\sigma)$ is the koszul sign determined by the shifted degree $|\cdot|'$.
We define a unit $\eta$ by the inclusion $R \cong E_0V \subset EV$ and define a counit $\epsilon$ by the projection from $EV$ to $R.$
Then $(EV, \,\cdot\,, \Delta, \eta, \epsilon)$ is a bialgebra.
We note that $\eta$ gives a coaugmentation and $(EV,\delta, \eta, \epsilon)$ is a cocomutative conilpotent cofree coalgebra (see, e.g., \cite[\S1]{lodalg}). 
For \[\{f_k\}_{1 \le k} \in \prod_{1 \le k} \Hom_R^1(V[1]^{\odot k}, V[1]),\]
we define $\widehat{f} \in \Hom_R^1(EV, EV)$ by 
\[\widehat{f}(y_1, \dots, y_k)=\sum_{\substack{p+q=k \\ 1 \le p \le k}}\sum_{\sigma \in \sh(p,q)}\epsilon(\sigma)
f_p(y_{\sigma(1)}, \dots, y_{\sigma(p)}) \odot \cdots \odot y_{\sigma(k)}.\]
This gives an isomorphism between the set $\prod_{1 \le k} \Hom_R^1(V[1]^{\odot k}, V[1])$ and the set of coderivations of $(EV, \Delta)$ such that composition of the coderivation and $\eta$ is equal to zero.  
Similarly, for \[\{f_k\}_{1 \le k} \in \prod_{1 \le k} \Hom_R^0(V[1]^{\odot k}, W[1]),\]
we define $\e^f \in \Hom_R^0(EV, EW)$ by the following:
\begin{align*}
f^{\odot l}(y_1, \dots, y_k)&=\sum_{\substack{i_1+ \cdots +i_l=k \\ \sigma \in \sh(i_1, \dots, i_l) }} 
                                                \epsilon(\sigma)
                                                f_{i_1}(y_{\sigma(1)}, \dots, y_{\sigma(i_1)})\odot \cdots \odot 
                                                f_{i_l}(y_{\sigma(i_1+\cdots+i_{l-1}+1)}, \dots, y_{\sigma(k)}),     \\
\e^f&=\sum_{l=0}^\infty \frac{f^{\odot l}}{l!}.
\end{align*}
Then this gives an isomorphism between the set $\prod_{1 \le k} \Hom_R^0(V[1]^{\odot k}, W[1])$ 
and the set of coalgebra morphisms from $EV$ to $EW$ such that the composition of the morphism and $\eta$ is equal to $\eta$ .
\begin{definition}
Let $L$ be a graded module over $R$ and 
$$
\{\ell_k\} \in \prod_{1 \le k} \Hom_R^1(L[1]^{\odot k}, L[1]).
$$
The pair $(L, \{\ell_k\})$ is called an {\it $\LI$ algebra over $R$} 
if $\{\ell_k\}$ satisfies the relation
$\widehat{\ell} \circ \widehat{\ell}=0.$
\end{definition}
\begin{remark}\label{DGLA}
Let $(L, \{\ell_k\}_{1 \le k})$ be an $\LI$ algebra. 
Set \[\delta=-\ell_1(y), \ [y_1, y_2]= (-1)^{|y_1|}\ell_2(y_1, y_2).\]
Assume that $\ell_k=0 \ (3 \le k).$
Then $(L, [\cdot, \cdot], \delta)$ is a differential graded Lie algebra $($DGLA for short$)$.
Conversely, a DGLA is naturally considered as an $\LI$ algebra.
\end{remark}
\begin{definition}
Let $(L, \{\ell_k\}), (L', \{\ell'_k\})$ be $\LI$ algebras.
A set of morphisms 
\[\{f_k\} \in \prod_{1 \le k} \Hom_R^0(L[1]^{\odot k}, L'[1])\] 
is called an {\it $\LI$ morphism} 
if it satisfies 
$\widehat{\ell'} \circ \e^f=\e^f \circ \widehat{\ell}.$
\end{definition}

We next recall the definition of $\LI$ modules over an $\LI$ algebra $(L, \{\ell_k\}).$
Let $M$ be a graded $R$-module.
We consider an $EL$ comodule $EL \otimes M[1].$
By the construction, there exists an isomorphism between the set 
$\Hom_R^1(EL \otimes M[1], M[1])$
and the set of coderivations of $EL \otimes M[1]$, 
where $(EL, \widehat{\ell})$ is considered as a differential graded coalgebra.
For an element 
\[\{\ell^M_k\} \in \prod_{0 \le k}\Hom_R^1(L[1]^{\odot k} \otimes M[1], M[1]),\]
the corresponding coderivation is denoted by 
$\widehat{\ell}^M \in \End^1_R (EL \otimes M[1])$. 
Namely, writing   
$\widehat{\ell}^M(y_1\odot \dots \odot y_k \otimes m)$ as 
$\widehat{\ell}^M(y_1,\dots,y_k |m)$, we have 
\begin{equation}\label{eq:Lmodule}
\aligned
\widehat{\ell}^M (y_1, \dots, y_k|m) 
& = \sum_{\substack{p+q=k \\ 1 \le p \le k}}\sum_{\sigma \in \sh(p,q)}\epsilon(\sigma)
\ell_p(y_{\sigma(1)}, \dots, y_{\sigma(p)}) \odot \cdots \odot y_{\sigma(k)}\otimes m \\
& + \sum_{\substack{p+q=k \\ 0 \le p \le k}}\sum_{\sigma \in \sh(p,q)}\epsilon' (\sigma) y_{\sigma(1)} \odot \dots \odot y_{\sigma(p)}\otimes
\ell^M_{q}(y_{\sigma(p+1)}, \dots, y_{\sigma(k)} | m),
\endaligned
\end{equation}
where $\epsilon'(\sigma)\!=\!(-1)^{|y_{\sigma(1)}|'+ \cdots +|y_{\sigma(p)}|'}\epsilon(\sigma)$.
Note that $\widehat{\ell}^M$ depends on the $\LI$ algebra structure of $L$.
\begin{definition}
$(M, \{\ell^M_k\}_{0 \le k})$ is called an {\it $\LI$ module over $L$} if 
$\widehat{\ell}^M \circ \widehat{\ell}^M=0.$ 
\end{definition} 
\begin{remark}\label{DGLAM}
Let $L$ be a DGLA and $(M, d)$ be a DGLA module over the DGLA $L$.
The action of $y \in L$ is denoted by $\LL_y \in \End^{|y|}_R(M).$
We consider $L$ as an $\LI$ algebra $($see Remark \ref{DGLA}$)$.
Set \[\ell_0^M=-d, \quad \ell_1^M(y|m)=(-1)^{|y|}\LL_y(m), \quad \ell_k^M=0 \ (k \ge 2).\]
Then $(M, \{\ell^M_k\}_{0 \le k})$ is an $\LI$ module. 
In this way, a DGLA module $(M,d,\LL)$ over a DGLA $L$
can be regarded as an $L_{\infty}$ module.
\end{remark}
We recall the definition of morphisms of $\LI$ modules.
Let $(N, \{\ell_k^N\}_{0 \le k})$ be another $\LI$ module over $L.$
Then there exists an isomorphism between the set $\Hom_R^0 (EL \otimes M[1], N[1])$ and 
the set of comodule morphisms from $EL \otimes M[1]$ to $EL \otimes N[1].$
For an element
\[\{f_k\}_{0 \le k} \in \prod_{0 \le k} \Hom^{0}_R (L[1]^{\odot k} \otimes M[1], N[1]),\]
the corresponding comodule morphism is denoted by $\check{f}.$
\begin{definition}
$\{f_k\}_{0 \le k}$ is called an {\it $\LI$ module morphism} if it satisfies
$\widehat{\ell}^N \circ \check{f}=\check{f} \circ \widehat{\ell}^M.$  
\end{definition}
\begin{remark}\label{rel}
Let $L$ and $L'$ be $\LI$ algebras over $R$, 
$\{f_k\}_{1 \le k}$ be an $\LI$ morphism from $L$ to $L',$ and 
$(M, \{\ell_k^M\}_{0 \le k})$ be an $\LI$ module over $L'$. 
Then $(\e^{-f} \otimes \id) \circ \widehat{\ell}^M \circ (\e^f \otimes \id)$ is a coderivation and 
this coderivation makes $M$ into an $\LI$ module over $L$. 
Using this construction, we can define $\LI$ module morphisms between $\LI$ modules defined over different $\LI$ algebras.
Similar remarks are applied to the cases of morphisms of 
CH-modules (Definition \ref{def:CHhom}) and normed CH-modules (Definition \ref{def:CHmor}).
\end{remark}

%%%%%%%%%%%%%%%%%%%%%%%%%%%%%%%%%%%%%%%%%%%%%%%%%%%%%%%%%%%%%%%%%%%%%%%%%%%%%%%%%%%
\section{CH structures}\label{sect:CH}

\subsection{Notations on formal power series}\label{s3.1}
Let $M$ be a graded additive group and 
let $t_1, t_2, \dots, t_k$ be formal variables with degree $d_1, d_2, \dots, d_k \in \Z$ respectively.
The space of degree $d$ formal power series of 
$t_1, \dots , t_k$ (these variables are graded commutative) with coefficients in $M$ 
is denoted by $M[[t]]^d$ and set $M[[t]]:=\oplus_{d \in \Z} M[[t]]^d.$
Note that the degrees of coefficients also contribute to the degrees of elements of $M[[t]].$
The space of formal Laurent power series $M((t))=\oplus_{d \in \Z} M((t))^d$ is defined similarly.
These are naturally considered as graded additive groups.

A morphism from $M$ to another graded additive group $N$ naturally extends to a morphism
from $M[[t]]$ (resp. $M((t))$) to $N[[t]]$ (resp. $N((t))$).
By abuse of notation, these extended morphisms are also denoted by the same symbols.  

We use multi-index notation, i.e.,
$t^\alpha:=t_1^{\alpha_1} \cdots t_k^{\alpha_k}$ for $\alpha=(\alpha_1, \dots, \alpha_k) \in \Z^k.$
If $R$ is a graded ring and $M$ is a graded module over $R$, then $R[[t]]$ (resp. $R((t))$) has a natural graded ring structure and $M[[t]]$ (resp. $M((t))$) has a natural (left) graded module structure over $R[[t]]$ (resp. $R((t))$).
Note that the ring structure and module structure are determined by the following sign rules:
\[t^\alpha r=(-1)^{|r||t^\alpha|}rt^\alpha,\  
t^\alpha m=(-1)^{|m||t^\alpha|}mt^\alpha, \ \ 
(r \in R, m \in M).\]

\subsection{CH structures}
In \S 3.2, we introduce CH structures. 
This construction is inspired by \cite{tsygau}.
Let $(L, \{\ell_k\}_{1\le k})$ be an $\LI$ algebra over a graded algebra $R$.
Let $z$ and $\epsilon$ be formal variables with degree 
$$
\vert z \vert =2, \quad \vert \epsilon \vert =1.
$$
We introduce a ``mapping cone'' of the morphism $z \cdot\id \in \End(L[[z]]).$
Set 
$$
\wt{L}:=L[[z]] \oplus \epsilon L[[z]].
$$
Then $\wt{L}[1]$ is naturally identified with $L[1][[z]]\oplus \epsilon L[1][[z]].$
Note that $\epsilon \circ s=-s \circ\epsilon$, where $\epsilon$ is the multiplication by $\epsilon$.
We define operations $\wt{\ell}_k$ on $\wt{L}[1]$ by the following equations:
\begin{equation}\label{eq:tildl}
\wt{\ell}_k(y_1+\epsilon y_1', \dots, y_k+\epsilon y_k'):=
   \ell_k(y_1, \dots, y_k)-\epsilon\sum_{i=1}^k(-1)^\# \ell_k(y_1, \dots, y_i', \dots, y_k)+
   \begin{cases}zy_1' &\text{if} \ k=1,\\
                      0     &\text {if} \ k\ge 2,
    \end{cases}
\end{equation}
where $\#:=|y_1|'+\cdots+|y_{i-1}|'$ and $|y_i|' = |y_i| + 1$ for $y_i \in L[[z]]$.
\begin{proposition}
$(\wt{L}, \{\wt{\ell}_k\}_{1 \le k})$ is an $\LI$ algebra over $R[[z]]$.
\end{proposition}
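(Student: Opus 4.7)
The plan is to verify the $\LI$ relation $\widehat{\wt\ell}\circ\widehat{\wt\ell}=0$ on $E\wt L$ by writing $\widehat{\wt\ell}=\widehat{A}+z\widehat{\iota}$, where $\widehat{A}$ is the coderivation built from the first two terms of \eqref{eq:tildl} and $\widehat{\iota}$ is the coderivation extension of the degree $-1$ contraction $\iota:\wt L\to\wt L$ defined by $\iota(y)=0$ for $y\in L[[z]]$ and $\iota(\epsilon y')=y'$. The $zy_1'$ correction appearing only in $\wt\ell_1$ is exactly $z\iota$ applied in the single slot. Since $\widehat A$ carries no explicit factor of $z$ while $z\widehat\iota$ carries one, squaring gives
$$\widehat{\wt\ell}^{\,2} = \widehat A^2 + z\bigl(\widehat A\widehat\iota+\widehat\iota\widehat A\bigr) + z^2\,\widehat\iota^{\,2},$$
and the three coefficients must vanish independently.

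For the $z^0$ term, $(\wt L,\{A_k\})$ is precisely the tensor product of the $\LI$ algebra $(L[[z]],\{\ell_k\})$ with the $\Z$-graded commutative $R[[z]]$-algebra $R[[z]][\epsilon]/(\epsilon^2)$ of degree $|\epsilon|=1$, the sign rule $\epsilon\circ s=-s\circ\epsilon$ being the one dictated by Koszul. Since tensoring any $\LI$ algebra with a graded commutative algebra again yields an $\LI$ algebra, $\widehat A^2=0$ reduces to $\widehat\ell^{\,2}=0$, which holds by hypothesis. For the $z^2$ term, $\iota^2=0$ pointwise on $\wt L$, and $\iota$ extends as an odd derivation of the cocommutative coalgebra $E\wt L$; the standard argument that an odd square-zero derivation of a graded symmetric algebra squares to zero then gives $\widehat\iota^{\,2}=0$. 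For the $z^1$ term, $\widehat A\widehat\iota+\widehat\iota\widehat A=0$ expresses that $\iota$ behaves as $\partial/\partial\epsilon$ and therefore satisfies the graded Leibniz rule against each $A_k$, an automatic consequence of $A_k$ being $R[\epsilon]/(\epsilon^2)$-multilinear.

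The principal obstacle I expect is careful Koszul sign bookkeeping, specifically: (i) confirming that the explicit sign $-(-1)^{\#}$ in \eqref{eq:tildl} is exactly the one produced by the Leibniz extension under the convention $\epsilon\circ s=-s\circ\epsilon$; and (ii) verifying $\widehat A\widehat\iota+\widehat\iota\widehat A=0$ at the coderivation level, which entails summing shuffle signs and tracking the cancellation between $\iota$ applied to outputs of $A_k$ and $\iota$ applied slot by slot to inputs. Once the tensor-extension interpretation is secured, the three identities follow from standard $\LI$ and graded coalgebra arguments.
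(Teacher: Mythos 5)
Your proof is correct and follows essentially the same route as the paper's: the paper likewise splits off the $zy_1'$ correction, observes that the remaining operations $\wt{\ell}'_k$ satisfy the $\LI$ relations, and then checks that the coefficient of $z$ in the cross terms $\wt{\ell}_1\wt{\ell}_k + \sum_i \pm\,\wt{\ell}_k(\dots,\wt{\ell}_1(y_i+\epsilon y_i'),\dots)$ vanishes. Your decomposition $\widehat{\wt{\ell}}=\widehat{A}+z\widehat{\iota}$ simply supplies the conceptual justifications (scalar extension along $R[[z]]\to R[[z]][\epsilon]/(\epsilon^2)$, and the Leibniz rule for $\iota=\partial/\partial\epsilon$ against the $R[[z]][\epsilon]/(\epsilon^2)$-multilinear $A_k$) for the two steps the paper labels ``easy to see'' and ``direct calculation.''
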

\begin{proof}
We put
 \[\wt{\ell}'_k(y_1+\epsilon y_1', \dots, y_k+\epsilon y_k'):=
   \ell_k(y_1, \dots, y_k)-\epsilon\sum_{i=1}^k(-1)^\# \ell_k(y_1, \dots, y_i', \dots, y_k). \]
Then it is easy to see that they satisfy the $\LI$ relations. 
Hence it is sufficient to show that the coefficient of $z$ in the formula 
 \[\wt{\ell}_1 \wt{\ell}_k(y_1+\epsilon y_1', \dots, y_k+\epsilon y_k')+
    \sum_{i=1}^k (-1)^\# \wt{\ell}_k(y_1+\epsilon y_1', \dots, \wt{\ell}_1(y_i+\epsilon y_i'),\dots, y_k+\epsilon y_k')\]
    is equal to zero, where $\#:=|y_1+\epsilon y_1'|'+\cdots+|y_{i-1}+\epsilon y_{i-1}'|'$.
This statement follows by direct calculation. 
\end{proof}
Let $(L', \{\ell'_k\}_{1 \le k})$ be another $\LI$ algebra. 
For an element 
$$
\{f_k\} \in \prod_{1 \le k} \Hom_R^{0}(L[1]^{\odot k}, L'[1]),
$$ we set 
\begin{equation}\label{eq:wtf}
\wt{f}_k(y_1+\epsilon y_1', \dots, y_k+\epsilon y_k'):=
   f_k(y_1, \dots, y_k)+\epsilon \sum_{i=1}^k(-1)^\# f_k(y_1, \dots, y_i', \dots, y_k),
\end{equation}
where $\#:=|y_1+\epsilon y_1'|'+\cdots+|y_{i-1}+\epsilon y_{i-1}'|'$.
\begin{proposition}\label{prop:wtf}
If $\{f_k\}_{1 \le k}$ is an $\LI$ morphism from $L$ to $L'$, then $\{\wt{f}_k\}_{1 \le k}$ is also an $\LI$ morphism from $\wt{L}$ to $\wt{L'}$.
\end{proposition}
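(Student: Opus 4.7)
The plan is to parallel the proof of the preceding proposition. First, write $\wt{\ell}_k = \wt{\ell}'_k + \delta_{k,1} D_L$, where $\wt{\ell}'_k$ denotes the right-hand side of \eqref{eq:tildl} with the $zy_1'$ correction removed, and $D_L \in \End^1_{R[[z]]}(\wt{L})$ is the operator $y + \epsilon y' \mapsto zy'$; define $D_{L'}$ on $\wt{L'}$ analogously. Since $D_L, D_{L'}$ are unary, the coderivations on $E\wt{L}$ and $E\wt{L'}$ decompose additively as $\widehat{\wt{\ell}} = \widehat{\wt{\ell}'} + \widehat{D_L}$ and $\widehat{\wt{\ell'}} = \widehat{\wt{\ell'}'} + \widehat{D_{L'}}$. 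Consequently the identity $\widehat{\wt{\ell'}} \circ \e^{\wt{f}} = \e^{\wt{f}} \circ \widehat{\wt{\ell}}$ to be established splits into
\[
\text{(i)}\ \ \widehat{\wt{\ell'}'} \circ \e^{\wt{f}} = \e^{\wt{f}} \circ \widehat{\wt{\ell}'}, \qquad \text{(ii)}\ \ \widehat{D_{L'}} \circ \e^{\wt{f}} = \e^{\wt{f}} \circ \widehat{D_L}.
\]

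For (i), the key observation is that $\wt{\ell}'_k$, $\wt{\ell'}'_k$ and $\wt{f}_k$ are precisely the $R[[z]][\epsilon]/(\epsilon^2)$-multilinear extensions of $\ell_k$, $\ell'_k$ and $f_k$: the $\epsilon$-substitution sums in \eqref{eq:tildl} and \eqref{eq:wtf} terminate after one step because $\epsilon^2 = 0$, and the Koszul signs are exactly those produced by pulling $\epsilon$ across $\ell_k$ (shifted degree $1$, giving a minus sign) or $f_k$ (shifted degree $0$, giving a plus sign). The given $\LI$ morphism identity $\widehat{\ell'} \circ \e^f = \e^f \circ \widehat{\ell}$ therefore persists after extension of scalars from $R[[z]]$ to $R[[z]][\epsilon]/(\epsilon^2)$, which is exactly (i). This step plays the role of ``they satisfy the $\LI$ relations'' in the previous proof.

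For (ii), since $\e^{\wt{f}}$ is a coalgebra morphism between coaugmented cofree coalgebras and $\widehat{D_L}, \widehat{D_{L'}}$ are coderivations, both sides of (ii) are coderivations along $\e^{\wt{f}}$, so it suffices to verify (ii) after composing with the projection to the cogenerators $\wt{L'}[1]$. This reduces (ii) to checking
\[
D_{L'}\bigl(\wt{f}_k(Y_1, \ldots, Y_k)\bigr) = \sum_{i=1}^k (-1)^{|Y_1|' + \cdots + |Y_{i-1}|'}\, \wt{f}_k(Y_1, \ldots, D_L(Y_i), \ldots, Y_k)
\]
for $Y_j = y_j + \epsilon y_j'$. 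Expanding via \eqref{eq:wtf}, the left side equals $z \sum_j (-1)^{|y_1|' + \cdots + |y_{j-1}|'} f_k(y_1, \ldots, y_j', \ldots, y_k)$, while each term $\wt{f}_k(Y_1, \ldots, zy_i', \ldots, Y_k)$ on the right has an $\epsilon$-free part together with an $\epsilon$-part coming from positions $j \neq i$. The $\epsilon$-free parts sum to the left side, using $|Y_j|' = |y_j|'$ and evenness of $|z|$. The $\epsilon$-parts cancel pairwise: for each $a < b$ the contributions from $(i,j) = (a,b)$ and $(i,j) = (b,a)$ differ by an overall sign, caused by the shift $|zy_i'|' = |y_i'| + 1$ that appears in one of them but not the other.

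The main obstacle is the Koszul-sign bookkeeping in the pairwise cancellation of (ii). Conceptually, $\wt{f}$ intertwines the mapping-cone differentials $D_L$ and $D_{L'}$ because it acts trivially in the $z$-direction, but making this precise requires tracking shifted degrees together with the degree-$2$ factor $z$ through the formula for $\wt{f}_k$; the rest of the proof is formal once the decomposition $\wt{\ell} = \wt{\ell}' + D_L$ is in place.
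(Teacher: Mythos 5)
Your proof is correct and follows essentially the same route as the paper's: the paper likewise reduces the $\LI$ morphism identity to the vanishing of the contribution of the extra term $zy_1'$ in $\wt{\ell}_1$ (your step (ii)), the remaining part being the $\epsilon$-extended identity inherited from $\{f_k\}$ (your step (i)), and then invokes a direct calculation. You have merely made explicit the cofreeness reduction and the pairwise sign cancellation that the paper leaves implicit.
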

\begin{proof}
It is sufficient to show that the coefficient of $z$ in the formula
\[\wt{\ell}_1 \wt{f}_k(y_1+\epsilon y_1', \dots, y_k+\epsilon y_k')-
    \sum_{i=1}^k (-1)^\# \wt{f}_k(y_1+\epsilon y_1', \dots, \wt{\ell}_1(y_i+\epsilon y_i'), 
    \dots, y_k+\epsilon y_k')\]
is equal to zero, where $\#:=|y_1+\epsilon y_1'|'+\cdots+|y_{i-1}+\epsilon y_{i-1}'|'.$
This follows by direct calculation.
\end{proof}
Now we consider the following decreasing filtration on $\wt{L}[1]$ which is defined by the number of 
$\epsilon$:
 \[\cdots F^{-2}:=\wt{L}[1] \supseteq 
 F^{-1}:=\wt{L}[1] \supset F^0:=L[1][[z]] \supset F^1:=0 \supseteq \cdots
 .\]
Let $\wt{M}$ be a graded $R[[z]]$ module. 
Then $\wt{M}[1]$ is equipped with the following filtration:
      \[ \cdots F^{-1}:=\wt{M}[1] \supseteq 
      F^0:=\wt{M}[1]\supset F^1:=0 \supseteq \cdots
      .\]
We note that tensor products of modules with filtrations are naturally equipped with filtrations.
We consider a set of morphisms 
\[\ell_k^{\wt{M}} \in \Hom_{R[[z]]}^{1} 
  (\wt{L}[1]^{\odot k}\otimes\wt{M}[1], \wt{M}[1]) \ \ (0 \le k).\]
Note that $\odot$ and $\otimes$ are defined over $R[[z]]$. 
We denote by 
$$
\widehat{\ell}^{\wt{M}} \in {\rm Hom}_{R[[z]]}^1  
(E\wt{L}\otimes\wt{M}[1], E\wt{L}\otimes\wt{M}[1])
$$ 
the coderivation corresponding to $\ell_k^{\wt{M}}$.   
\begin{definition}
Let $(L, \{\ell_k\}_{1\le k})$ be an $\LI$ algebra over a graded algebra $R$ and 
$\wt{M}$ a graded $R[[z]]$ module.  
Let $n \in \Z_{\ge 0}$. A set of morphisms $\{\ell_k^{\wt{M}}\}_{0 \le k}$ is called an 
{\it $\LI$ module structure on $\wt{M}$ over $\wt{L} \ \mathrm{mod} \ \epsilon^n$}  if it satisfies  
\begin{equation}\label{eq:mod}
(\widehat{\ell}^{\wt{M}}\circ\widehat{\ell}^{\wt{M}})(F^a) \subset F^{a+n}
\end{equation} 
for any $a \in \Z$.
Here $F^a$ is the filtration on $E\wt{L} \otimes \wt{M}[1]$ induced by the filtrations on 
$\wt{L}[1]$ and $\wt{M}[1]$ defined as above. 
\end{definition}

\begin{remark}\label{3.4}
Let $\epsilon : E\wt{L} \to R[[z]]$ be the counit.
By the explicit formula (\ref{eq:Lmodule}) 
(modify the sign $\epsilon'(\sigma)$ to $\epsilon(\sigma)$), 
the morphism $\epsilon \otimes \id$ gives an isomorphism from
the set of coderivations (of degree $2$) of $E\wt{L} \otimes \wt{M}[1]$ to
$\Hom^2_R(E\wt{L} \otimes \wt{M}[1], \wt{M}[1])$ and this isomorphism preserves the filtrations.
Moreover 
\[F^{a+n}\wt{M}[1]=\begin{cases} \wt{M}[1]  \ &(a+n \le 0),\\
                                              0 \ &(a+n \ge 1). \end{cases}\]
Hence the condition $(\widehat{\ell}^{\wt{M}}\circ\widehat{\ell}^{\wt{M}})(F^a) \subset F^{a+n}$ is equivalent to
the condition 
\begin{align*}
\big{(}(\epsilon \otimes \id) \circ \widehat{\ell}^{\wt{M}}\circ\widehat{\ell}^{\wt{M}}\big{)}(F^{1-n})=0.
\end{align*}

\end{remark}

\begin{remark}\label{3.5}
%Note that $F^{a+n} (E\wt{L}\otimes\wt{M}[1])=0$ if and only if $a+n\ge 1$. 
When $n=1$, the condition \eqref{eq:mod} yields 
$\ell^{\wt{M}} \circ \widehat{\ell}^{\wt{M}} (F^0) =0$. 
Thus an $\LI$ module structure on $\wt{M}$ over $\wt{L}$ $\mod \epsilon$ is nothing but an $\LI$ module structure on $\wt{M}$ over $L[[z]]$. When $n=2$, 
the condition \eqref{eq:mod} implies 
$\ell^{\wt{M}} \circ \widehat{\ell}^{\wt{M}} (F^{-1})=0.$ 
\end{remark}

\begin{definition}\label{def:CH}
An $\LI$ module structure on $\wt{M}$ over $\wt{L}$ $\ \mathrm{mod} \ \epsilon^2$ 
is called a {\it CH structure on $\wt{M}$ over $L$}. 
A graded $R[[z]]$ module $\wt{M}$ with a CH structure over $L$ is called a {\it CH module over $L$}.
\end{definition}

%%%%%%%%%%%%%%%%%%%        END DEFINITON OF CH STRUCTURE     %%%%%%%%%%%%%%%%%%%%%%%%%%

Let $(\wt{M}, \{\ell^{\wt{M}}_k\}_{0 \le k})$ be a CH module over an $\LI$ algebra $(L, \{\ell_k\}_{0 \le k})$. 
We put 
\begin{equation}\label{ch3.2.1}
\delta:=-\ell_1, \quad [y_1, y_2]:=(-1)^{|y_1|}\ell_2(y_1, y_2) 
 \ \text{for all} \ y_1, y_2 \in L,
\end{equation}
and   
\begin{equation}\label{ch3.2.2}
d:=-\ell_0^{\wt{M}}, \quad \LL_y:=(-1)^{|y|}\ell_1^{\wt{M}}(y|\,\cdot\,)
  \ \text{for all} \ y \in L[[z]] \subset \wt{L}.
\end{equation}
Moreover, associated to the CH module structure,  
we define maps $\io_{y_1}$ and $\rho_{y_1, y_2}$ for $y_1, y_2 \in L[[z]]$ by  
\begin{equation}\label{ch3.3}
 \io_{y_1}:=(-1)^{|y_1|+1}\ell_1^{\wt{M}}(\epsilon y_1|\, \cdot \,), \ \quad 
 \rho_{y_1, y_2}:=(-1)^{|y_1|+|y_2|}\ell^{\wt{M}}_2(y_1, \epsilon y_2|\,\cdot\,).
\end{equation}

Now we write down explicitly the relations of the CH module structure under the following conditions:
\begin{equation}\label{ch3.1} 
\begin{cases}
& \ell_k=0 \ (3 \le k),\\
& \ell_k^{\wt{M}} =0 \ (3 \le k), 
\ \  \ell_2^{\wt{M}}(y_1, y_2| \ \cdot\ )=0 
\ \text{for all} \ y_1, y_2 \in L[[z]] \subset \wt{L}.  
\end{cases}
\end{equation}
Note that $(L, \delta, [\cdot, \cdot])$ is a DGLA over $R$ and $(\wt{M},d, \LL)$ is a DGLA module over the DGLA $L[[z]],$ where the DGLA structure on $L$ linearly extends to $L[[z]]$.  
\begin{remark}\label{lsign}
We will consider $\LL$ $($resp. $\io)$ as an element of 
$$
\Hom_{R[[z]]}\big{(}L[[z]], \End_{R[[z]]}(\wt{M})\big{)}
$$
of degree $0$ $($resp. degree $1)$.
Similarly, we will consider 
$$
\rho \in 
\Hom^0_{R[[z]]}\big{(}L[[z]] \otimes L[[z]], \End_{R[[z]]}(\wt{M})\big{)}.
$$
\end{remark}   
The $\LI$ relations$\mod \epsilon ^2$ yield the following relations 
among the operators (see Remark \ref{3.4}):
\begin{align} \label{ch3.4}
&[d, \io_y]+\io_{\delta y}+z\LL_y=0, \\ \label{ch3.5}
&\io_{[y_1, y_2]}-(-1)^{|y_1|}[\LL_{y_1}, \io_{y_2}]+
        [d, \rho_{y_1, y_2}]-\rho_{\delta y_1, y_2}-(-1)^{|y_1|}\rho_{y_1, \delta y_2}=0, \\ \label{ch3.6}
&\rho_{[y_1, y_2], y_3}-\rho_{y_1, [y_2, y_3]}+(-1)^{|y_1||y_2|}\rho_{y_2, [y_1, y_3]}
-[\LL_{y_1}, \rho_{y_2, y_3}]+(-1)^{|y_1||y_2|}[\LL_{y_2}, \rho_{y_1, y_3}]=0.
\end{align}
Conversely, we easily see the following:
\begin{proposition}\label{chprop3.7}
Let $(L, \delta, [\,\cdot, \cdot\,])$ be a DGLA over $R$ and $(\wt{M}, d, \LL)$ be a DGLA module over 
the DGLA $L[[z]]$ equipped with morphisms $\io$ and $\rho$ 
where 
$$
\aligned
\io & \in \Hom^1_{R[[z]]}\big{(}L[[z]], \End_{R[[z]]}(\wt{M})\big{)}, \\
\rho & \in \Hom^0_{R[[z]]}\big{(}L[[z]] \otimes L[[z]], \End_{R[[z]]}(\wt{M})\big{)}.
\endaligned
$$
Suppose that these morphisms satisfy the relations \eqref{ch3.4}, \eqref{ch3.5} and 
\eqref{ch3.6}.
Then, by the formulas \eqref{ch3.2.1}, \eqref{ch3.2.2} and \eqref{ch3.3}, these morphisms give a CH structure on $\wt{M}$ over $L$ which satisfies the conditions \eqref{ch3.1}. 
\end{proposition}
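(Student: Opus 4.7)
The plan is to reverse-engineer the construction leading to equations \eqref{ch3.4}, \eqref{ch3.5}, \eqref{ch3.6}. First I would use formulas \eqref{ch3.2.1} to promote $(L,\delta,[\cdot,\cdot])$ to an $\LI$ algebra with $\ell_k=0$ for $k\ge 3$ (Remark \ref{DGLA}), and then extend to $\wt{L}$ via \eqref{eq:tildl}. On the module side, I define $\ell_0^{\wt M}=-d$, and $\ell_1^{\wt M}(y|m)=(-1)^{|y|}\LL_y m$ for $y\in L[[z]]$, while $\ell_1^{\wt M}(\epsilon y|m)=(-1)^{|y|+1}\io_y m$ and $\ell_2^{\wt M}(y_1,\epsilon y_2|m)=(-1)^{|y_1|+|y_2|}\rho_{y_1,y_2}m$, together with $\ell_2^{\wt M}(y_1,y_2|\cdot)=0$ for $y_i\in L[[z]]$; the values on inputs containing two or more $\epsilon$'s and all $\ell_k^{\wt M}$ with $k\ge 3$ are set to zero. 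The value on $\ell_2^{\wt M}(\epsilon y_1,y_2|\cdot)$ is forced by graded symmetry of $\odot$ and the Koszul sign convention.

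Next I would invoke Remark \ref{3.4}, which identifies the CH (i.e.\ $\mod\epsilon^2$) condition with
\[
\bigl((\epsilon\otimes\id)\circ\widehat{\ell}^{\wt M}\circ\widehat{\ell}^{\wt M}\bigr)(F^{-1})=0.
\]
Since $F^{-1}\subset E\wt L\otimes\wt M[1]$ is spanned by monomials carrying at most one $\epsilon$, it suffices to verify the square-zero identity on such monomials, arity by arity. Because $\ell_k$ and $\ell_k^{\wt M}$ vanish for $k\ge 3$ (and $\ell_2^{\wt M}$ vanishes on $L[[z]]^{\odot 2}$), the expansion of $\widehat{\ell}^{\wt M}\circ\widehat{\ell}^{\wt M}$ on each such monomial produces only finitely many surviving terms.

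Then I would match each arity with the prescribed relations. For zero-$\epsilon$ inputs, arities $0,1,2$ reproduce $d^2=0$, $[d,\LL_y]=\LL_{\delta y}$ (up to sign), and $[\LL_{y_1},\LL_{y_2}]=\LL_{[y_1,y_2]}$ (up to sign); these hold by the DGLA module structure of $(\wt M,d,\LL)$ over $L[[z]]$. For one-$\epsilon$ inputs: arity $1$ (input $\epsilon y\otimes m$) reduces to \eqref{ch3.4}; arity $2$ (inputs $\epsilon y_1\odot y_2\otimes m$ and $y_1\odot\epsilon y_2\otimes m$, identified via Koszul symmetry) reduces to \eqref{ch3.5}; arity $3$ reduces to \eqref{ch3.6}, after combining the $\rho$-terms on the two $\wt L$-outputs of $\widehat{\ell}^{\wt M}$ and using the graded Jacobi identity of $[\cdot,\cdot]$. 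The $z$-dependence in \eqref{eq:tildl} is precisely what produces the $z\LL_y$ term in \eqref{ch3.4}; all other arities give trivial identities because the relevant $\ell_k$ or $\ell_k^{\wt M}$ vanish.

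The main obstacle will be the translation of signs: equations \eqref{ch3.2.1}, \eqref{ch3.2.2}, \eqref{ch3.3} use the unshifted grading $|\cdot|$, whereas the $\LI$-module relation \eqref{eq:Lmodule} and the symmetry of $\odot$ live in the shifted grading $|\cdot|'$, with additional $(-1)^{|y_{\sigma(i)}|'}$ factors from $\epsilon'(\sigma)$ and from the rule $\epsilon\circ s=-s\circ\epsilon$. I would set up a short sign lemma at the start—tabulating the conversion $(-1)^{|y|}\leftrightarrow(-1)^{|y|'+1}$ and the sign picked up when moving $\epsilon$ past $L[1]$-elements—so that the arity-by-arity matching above becomes a routine verification rather than a calculation-by-calculation exercise.
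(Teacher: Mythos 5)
Your proposal is correct and follows essentially the same route as the paper: the paper derives \eqref{ch3.4}--\eqref{ch3.6} as precisely the surviving terms of $(\epsilon\otimes\id)\circ\widehat{\ell}^{\wt{M}}\circ\widehat{\ell}^{\wt{M}}$ on $F^{-1}$ under the conditions \eqref{ch3.1}, and then states the converse (Proposition \ref{chprop3.7}) as immediate, which is exactly your arity-by-arity verification run in reverse via Remark \ref{3.4}. The only cosmetic remark is that at arity three no Jacobi identity is needed for the module relation itself (it enters only through Proposition \ref{prop:wtf}'s predecessor, namely the fact that $\wt{L}$ is an $\LI$ algebra, which underlies Remark \ref{3.4}); otherwise your term-by-term matching and your treatment of the inputs with two or more $\epsilon$'s are exactly right.
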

%We also define $\LL_{y_1, y_2},$ $\rho_{y_1, y_2} \in \End(\wt{M})$ by 
%  \[\LL_{y_1, y_2}:=(-1)^{|y_2|} \ell^{\wt{M}}_2(y_1, y_2), \ 
%    \rho_{y_1, y_2}:=(-1)^{|y_1|+|y_2|+1}\ell^{\wt{M}}_2(y_1, \epsilon y_2|\,\cdot\,).\]
%By the $\LI$ relations$\mod \epsilon ^2,$ these operators satisfy the following relations:
%\begin{align}
%[d, \io_y]+\io_{\delta y}&+z\LL_y \label{ch}=0\\
%\io_{[y_1, y_2]}-(-1)^{|y_1|}[\LL_{y_1}, \io_{y_2}]&=
%        [d, \rho_{y_1, y_2}]-\rho_{\delta y_1, y_2}-(-1)^{|y_1|}\rho_{y_1, \delta y_2}
%        -z\LL_{y_1, y_2}
%\end{align}
%Moreover, if $\LL_{(y_1, y_2)}=0$ and $\ell^{\wt{M}}_3=0,$
%then the next relation is written as follow:
%\begin{align}
%\rho_{[y_1, y_2], y_3}-\rho_{y_1, [y_2, y_3]}+(-1)^{|y_1||y_2|}\rho_{y_2, [y_1, y_3]}
%-[\LL_{y_1}, \rho_{y_2, y_3}]+(-1)^{|y_1||y_2|}[\LL_{y_2}, \rho_{y_1, y_3}]=0.
%\end{align}
%%%%%%%%%%%%%%%%%%%%%%%%%%%%%%%%%  EXAMPLES %%%%%%%%%%%%%%%%%%%%%%%%%%%%%%%%%%%%%%%%
\begin{example}\label{ex:cal}
({\bf Calculus algebra}.) To give an example of CH module, 
we recall the definition of calculus $($e.g., \cite{dolnon}, \cite{tamnon}$)$. 
Let $(V, \wedge)$ be a graded commutative algebra over $R$, $W$ be a graded module over $V$
with the structure morphism $\iota_\bullet : V\otimes W \to W$ and $B \in \End^{-1}_R(W)$ be a degree $-1$ morphism with $B^2=0.$ 
Assume $V[1]$ is equipped with a graded Lie algebra structure $[\cdot, \cdot]$ and $W[1]$ is equipped with a graded Lie module structure 
$\LL_\bullet : V[1] \otimes W[1] \to W[1]$.
For $x \in V$, the morphisms $(-1)^{|x|'}[x, \cdot\,] \in \End_R(V)$ and
$(-1)^{|x|'}\LL_x \in \End_R(W)$ are denoted by $l_x$.
The 7-tuple $(V, W, \wedge, [\cdot, \cdot], \iota_\bullet, \LL_\bullet, B)$ is called a 
{\it calculus}
if they satisfy 
 \begin{align*}
  l_{x_1}(x_2 \wedge x_3)=(l_{x_1}x_2) \wedge x_3 + (-1)^{(|x_1|+1)|x_2|}x_2 \wedge l_{x_1}x_3
 \end{align*}
and
 \begin{align*}   
  l_{x_1 \wedge x_2}=l_{x_1} \circ \iota_{x_2}+(-1)^{|x_1|}\iota_{x_1} \circ l_{x_2},\ \ \ 
  \iota_{l_{x_1}x_2}=[l_{x_1}, \iota_{x_2}],\ \ \ 
  l_{x}&=[B, \iota_x].
 \end{align*}
For a calculus, set 
\[ L:=V[1], \ \wt{M}:=W[1][[z]], \ d:=zB, \ \LL_{x}:=\LL_{x}, \ \io_{x}:=(-1)^{|x|} \iota_x \ (x \in V).\]  
Then $\wt{M}$ is a CH module over $L$ with 
$\ell^{\wt{M}}_k=0, \ (k \ge 2)$. 
\end{example}
\begin{example}\label{hypcom}
({\bf Hypercommutative algebra}.) 
We recall the definition of hypercommutative algebra \cite{getope}.
Let $A$ be a graded $R$ module equipped with a symmetric $k$-ary operation 
$(\,\cdot, \dots, \cdot\,) : A^{\odot k} \to A$ of degree $4-2k$ for each $k \ge 2.$
For a subset $S=\{i_1, i_2, \dots, i_k\} \subset \N  \ (i_1 < \cdots < i_k)$ and 
$x_{i_1}, \dots, x_{i_k} \in A$, we will denote $x_{i_1} \odot \cdots \odot x_{i_k}$ by $x_S$. 
We call $A$ a {\it hypercommutative algebra} if 
\[\sum_{S_1 \sqcup S_2=\{3,4, \dots, k-1\}}\pm(x_1, x_{S_1}, (x_2, x_{S_2}, x_k))= 
   \sum_{S_1 \sqcup S_2=\{3,4, \dots, k-1\}}\pm(x_2, x_{S_1}, (x_1, x_{S_2}, x_k)),\]
where $\pm$ are the Koszul signs. 
These equations are called the {\it WDVV equations}.
\par
Set $L:=A[1]$, which is considered as an abelian graded Lie algebra.
We also set $\wt{M}:=A[[z]].$
We define $\ell^{\wt{M}}_k$ by
 \[\ell^{\wt{M}}_k(x_1, \dots, x_k|m):=0, \ 
  \ell^{\wt{M}}_k(\epsilon x_1, x_2, \dots, x_k|m):=
  (-1)^{1+|x_1|+|x_2|+ \cdots+|x_k|} (x_1, x_2, \dots, x_k, m),\]
where $x_1, \dots, x_k, m \in A[[z]]$. 
Note that  the double suspensions $s^2x_i$ are also denoted by the same symbol $x_i.$
Then $\wt{M}$ is a CH module over $L$.
Moreover, if we define $\ell^{\wt{M}}_k(\epsilon x_1, \epsilon x_2, \dots, x_k|m)=0$, we obtain a trivially extended $L_{\infty}$ module 
mod $\epsilon^3$ structure in this example by using the $WDVV$ equations.
\end{example}

%%%%%%%%%%%%%%%%%%%%%%%%%%%% MORPHISMS OF CH ALGEBLAS %%%%%%%%%%%%%%%%%%%%%%%%%%%
We next define morphisms of CH modules.
Let $\wt{M}$ and $\wt{N}$ be CH modules over $L$.
We consider a set of morphisms 
$$
\{f_k\}_{0 \le k} \in \prod_{0 \le k} \Hom^{0}_{R[[z]]}(\wt{L}[1]^{\odot k}\otimes\wt{M}[1], \wt{N}[1]).
$$ 
\begin{definition}\label{def:CHhom}
The set $\{f_k\}_{0 \le k}$ is called a 
{\it CH module morphism from $\wt{M}$ to $\wt{N}$} if it satisfies  
\[(\widehat{\ell}^{\wt{N}} \circ \check{f} - \check{f} \circ \widehat{\ell}^{\wt{M}})(F^a)
    \subset F^{a+2}\] 
for all $a \in \Z.$
\end{definition}
\begin{remark}\label{relch}
Let $L, L'$ be $\LI$ algebras and $\{f_k\}_{1 \le k}$ be an $\LI$ morphism from $L$ to $L'$.
Then the morphism $\e^{\wt{f}}$ preserves the filtrations. 
Hence a CH module over $L'$ naturally gives a CH module over $L$ $($see also Remark \ref{rel}$)$.   
\end{remark}
Let $\{f_k\}_{0 \le k}$ be a CH module morphism from $\wt{M}$ to $\wt{N}$.  
For $y \in L[[z]]$ and $m \in \widetilde{M}$ we put 
\begin{equation}\label{chmor3.7}
F_y(m):=(-1)^{|y|}f_1(y|m), \quad  
F_y^{\epsilon}(m):=(-1)^{|y|+1}f_1(\epsilon y|m).
\end{equation}
Thus we have $F, F^{\epsilon} \in \Hom_{R[[z]]} \left(L[[z]],\End_{R[[z]]}(\wt{M},\wt{N})\right)$.
%These morphisms are considered as $R[[z]]$ module morphisms from $L[[z]]$ to 
%{\color{blue}$\End_{R[[z]]}(\wt{M},\wt{N})$}. 
Then we find 
\begin{equation}\label{chmo}
\io_y \circ f_0-f_0 \circ \io_y=d \circ F_y^{\epsilon}-(-1)^{|y|}F_y^{\epsilon} 
\circ d-F_{\delta y}^{\epsilon}-zF_y.
\end{equation}
Here $f_0 \in \End_{R[[z]]}(\wt{M}[1],\wt{N}[1])$ is naturally considered as an element of $\End_{R[[z]]}(\wt{M},\wt{N})$.
%%%%%%%%%%%%%%%%%%%%%%%%%%%%%%%%%%%%%%%%%%%%%%%%%%%%%%%%%%%%%%%%%%%%%%%%%%%%%%%%%%

\section{Normed objects}\label{sec:norm}
\subsection{Preliminaries on norms}
A main reference of this subsection is \cite{bgr}.
We basically follow the terminology used there.
Let $V=\oplus_{k \in \Z} V^k$ be an graded additive group. 
A (non-archimedean) norm on $V$ is a function $\no{\cdot} : V \to \R_{\ge 0}$ with the following properties:
\begin{itemize}
\item $\no{v_1-v_2} \le \max \{\no{v_1}, \no{v_2}\}$ for all $v_1, v_2 \in V$.
\item $\no{v}=0$ if and only if $v=0$. 
\item $\no{v}=\displaystyle{\max_{k \in \Z}} \no{v_k}$, where $v=\sum v_k  \ (v_k \in V^k).$
\end{itemize}
A graded additive group with a norm is called a {\it graded normed group}.
A graded normed group $V$ is said to be {\it complete} if each $V^k$ is a complete metric space
with respect to the norm.
Set $\no{V}:= \sup\{\no{v}\ |\ v \in V\}$. 
A graded normed group $V$ is said to be {\it bounded} 
if $\no{V} \le C$ for some constant $C \in \R_{\ge 0}$.

Let $W$ be another graded normed group.
Then $V \otimes W$ is equipped with a norm which is defined by
$\|\sum_{i=1}^kv_i \otimes w_i\|=\sup_{1 \le i \le k}\no{v_i} \cdot \no{w_i}.$
The completion of $(V\otimes W)^k$ is denoted by $(V \hotimes W)^k.$
The completed tensor product is defined by $V\hotimes W :=\oplus (V \hotimes W)^k$. 
The completed symmetric tensor product $\widehat{\odot}$ and
the completed direct sum $\hoplus$ are defined similarly. 
 
A group morphism $f : V \to W$ is said to be {\it contractive}  
if $\no{f(v)} \le \no{v}$ for all $v \in V.$

Let $R$ be a graded ring and $\no{\cdot}$ be a norm on $R$ (as a graded additive group).
The norm $\no{\cdot}$ is called a {\it ring norm} if it satisfies 
$\no{r_1 r_2} \le \no{r_1}\no{r_2} \ (r_1, r_2 \in R)$ and $\ \no{1}=1.$
A graded ring equipped with a ring norm is called a {\it graded normed ring}.

Let $V$ be a graded module over a graded normed ring $R$ and $\no{\cdot}$ be a norm on $V$.
The norm $\no{\cdot}$ is called a {\it module norm} if it satisfies 
$\no{rv} \le \no{r}\no{v} \ (r \in R, v \in V).$
A graded module equipped with a module norm is called a {\it graded normed module}.

Let $\ke$ be a normed ring (ungraded, i.e., concentrated in degree zero).
Let $R$ be a graded normed algebra over $\ke$, i.e.,
$R$ is a graded algebra over $\ke$ equipped with a norm $\no{\cdot}$ such that 
$\no{\cdot}$ is a ring norm and a $\ke$ module norm.

\begin{example}\label{ex:normedring}
Here are some examples of normed rings $\ke$.
\begin{enumerate}
\item A field with the trivial valuation.
\item The universal Novikov field %(\cite{fooobook1}, \cite{foootoric1})
  \[\Lambda:=\displaystyle{\left\{\sum_{i=0}^\infty a_i T^{\lambda_i}\relmiddle|
     \begin{aligned} &a_i \in \C, \lambda_i \in \R, \lim_{i \to \infty}\lambda_i=\infty \\
                          &\lambda_0 < \lambda_1 <\lambda_2 < \cdots
     \end{aligned}     \right\}}.\]
        The norm on $\Lambda$ is defined by 
        $\|\displaystyle{\sum_{i=0}^\infty a_i T^{\lambda_i}\|:=\e^{-\lambda_0}}.$
        Note that $\Lambda$ is a non-archimedean valuation field. 
\item The Novikov ring 
$\Lambda_0 = \{ x \in \Lambda ~\vert~ \Vert x \Vert \le 1 \} \ ($the valuation ring of $\Lambda),$ 
         where the norm is induced from  $\Lambda$.   
\end{enumerate}
\end{example}
%\begin{remark}
%$\Lambda$ is also considered as a field with the trivial valuation. 
%\end{remark}
\begin{example}\label{4.3}\
Here are some examples of graded normed algebras and graded normed modules.
\begin{enumerate}
\item Let $R$ be an $($ungraded$)$ normed algebra over $\ke$ $($e.g., $R=\Lambda_0, \ke=\C)$.
         Set $R^e:=R((e)),$ where $e$ is a formal variable of degree $2$.
         Note that $R((e)) =\oplus_k R((e))^k =R[e, e^{-1}]$ (see \S \ref{s3.1}).
         Set \[\displaystyle{\|\sum_{k\in \Z} r_k e^k\|:=\max_{k \in \Z}\|r_k\|}, \ \ (r_k \in R).\]
         Then $R^e$ is a graded normed algebra over $\ke$. 
\item Let $R$ be a graded normed algebra over $\ke$ and 
         $z$ be a formal variable of degree $2$.
         Assume that $R$ is bounded.
         We define a norm on $R[[z]]$ by 
         \[\no{\sum_{k=0}^\infty r_kz^k}:=\sup_{k \in \Z_{\ge 0}}\no{r_k}.\]
         Then $R[[z]]$ is a graded normed algebra.
         
         Let $M$ be a graded bounded normed module over $R.$
         Similar to $R[[z]]$, $M[[z]]$ is also equipped a norm and 
         $M[[z]]$ is a graded normed module over $R[[z]].$   
\item Let $R$ be a graded normed algebra over $\ke$ 
         and let $t^1, \dots, t^m$ be formal variables with degree $d_1, \dots ,d_m \in \Z$.
         Assume that $R$ is bounded.
         Take some constant $C \in \R_{>1}.$
          We define a norm on $R[[t]]$ by 
          \[\no{\sum_\alpha r_\alpha t^\alpha}:=
          \sup_\alpha \no{r_\alpha}\no{t^\alpha},\]
          where $\no{t^\alpha}:=C^{-\alpha_1-\cdots-\alpha_m}.$
          Then $R[[t]]$ is a graded normed algebra over $\ke$.
          
          Let $M$ be a graded bounded normed module over $R.$
          Similar to $R[[t]]$, $M[[t]]$ is also equipped with a norm and 
          $M[[t]]$ is a graded normed module over $R[[t]].$ 
\end{enumerate}
\end{example}
\begin{remark}
If a graded bounded normed group $V$ is complete, then $V[[z]], V[[t]]$ are also complete.
\end{remark}

\subsection{Normed $\LI$ algebras and modules}

Let $R$ be a graded normed algebra over a normed ring $\ke$.
Assume that $R$ is complete.
Let $L$ be an $\LI$ algebra equipped with a complete $R$ module norm.  
Set $\widehat{E}L:=\hoplus L[1]^{\hodot k}.$
As in the case of $EL$, we can define morphisms $\cdot, \Delta, \eta, \epsilon$ on $\widehat{E}L.$
Morphisms $\ell_k \in \Hom^1_R(L[1]^{\odot k}, L[1]) \ (1 \le k)$ with 
$\no{\ell_k(y_1, \dots, y _k)} \le \no{y_1} \cdots \no{y_k}$
naturally extend to a contractive coderivation $\widehat{\ell}$ of $\widehat{E}L$ with $\widehat{\ell} \circ \eta=0$.  
Also as in the case of $EL,$ this correspondence gives an isomorphism.
A set of morphisms $\{\ell_k\}_{1 \le k}$ corresponding to a contractive morphism $\widehat{\ell}$ is also said to be {\it contractive}. 
Similarly, we can define contractive morphisms $\e^f, \widehat{\ell}^M, \check{f}$ (see \S \ref{slinf}) and corresponding sets of morphisms are also said to be contractive.
\begin{definition}
A {\it normed $\LI$ algebra} is a pair $(L, \{\ell_k\}_{1\le k})$,where
\begin{itemize}
\item $L$ is a graded complete normed $R$-module.
\item $\{\ell_k\}$ is an $\LI$-structure on $L$.
\item $\{\ell_k\}$ is contractive.
\end{itemize}
\end{definition}
\begin{definition}
A morphism between normed $\LI$ algebras is defined by a contractive $\LI$ morphism, which is called a {\it normed $\LI$ morphism}.
\end{definition}

\begin{definition}
A {\it normed $\LI$ module} over a normed $\LI$ algebra $L$ is a pair $(M, \{\ell^M_k\}_{0 \le k})$, where
\begin{itemize}
\item $M$ is a graded complete normed $R$-module.
\item $\{\ell^M_k\}$ is an $\LI$-module structure on $M$ over $(L, \{\ell_k\}_{1\le k}).$
\item $\{\ell^M_k\}$ is contractive.
\end{itemize}
\end{definition}
\begin{definition}
A morphism between normed $\LI$ modules is defined by a contractive $\LI$ module morphism,
which is called a {\it normed $\LI$ module morphism}.
\end{definition}

\subsection{Normed CH modules}
We assume $\no{R} \le 1.$
Let $L$ be a normed $\LI$ algebra with $\no{L} \le 1.$
We define a norm on $\wt{L}$ by $\no{y+\epsilon y'}:=\max\{\no{y}, \no{y'}\}$
(see Example \ref{4.3} (2) for the definition of the norm on $L[[z]]$). 
Then $(\wt{L}, \{\wt{\ell}_k\})$ is a normed $\LI$ algebra.
\begin{definition}
A {\it normed CH module} over a normed $\LI$ algebra 
 $L$ is a pair $(\wt{M}, \{\ell^{\wt{M}}_k\}_{0 \le k})$, where
\begin{itemize}
\item $\wt{M}$ is a graded complete normed $R[[z]]$ module.
\item $\{\ell^{\wt{M}}_k\}$ is a CH structure on $\wt{M}$.
\item $\{\ell^{\wt{M}}_k\}$ is contractive.
\end{itemize}         
\end{definition}
\begin{definition}\label{def:CHmor}
A morphism between normed CH modules is defined by a contractive CH module morphism,
which is called a {\it normed CH morphism}. 
\end{definition}
%\begin{remark}
%Let $L'$ be another normed $\LI$ algebra with $\no{L'} \le 1$ 
%and $\{f_k\}$ be a normed $\LI$ morphism from $L$ to $L'$.
%Then $\{\wt{f}_k\}$ 
%defined in Proposition \ref{prop:wtf} is a normed $\LI$ morphism from $\wt{L}$ to $\wt{L'}$.
%Hence a normed CH module over $L'$ gives a normed CH module over $L$ 
%$($see also Remarks \ref{rel} and \ref{relch}$)$.
%
%\end{remark}

%%%%%%%%%%%%%%%%%%%%%%%%%%%%%%%%%%%%%%%%%%%%%%%%%%%%%%%%%%%%%%%%%%%%%%%%%%%%%%%%%%%

\section{Getzler-Gauss-Manin connections}\label{sec:conn}

\subsection{Connections on CH modules}\label{5.1}
Let $R$ be a graded algebra over a ring $\ke.$
In this subsection $R$ and $\ke$ are not assumed to be normed.
The graded Lie algebra of derivations of $R$ is denoted by $\der_{\ke}(R)$, i.e., 
\[\der_{\ke}(R)=\oplus_k \der_{\ke}^k(R), \ \der_{\ke}^k(R):=\left\{X \in \End_{\ke}^k (R) \relmiddle| X(rr')=X(r)r'+(-1)^{k|r|}rX(r')\right\}.\]
\begin{definition}
We define $E \in \der_\ke^0(R)$ by $E(r):=\frac{1}{2}|r|r$.
This vector field is called an {\it Euler vector field}.
\end{definition}
For a graded $R$ module $V$, a {\it connection on $V$} is a (degree preserving) morphism 
\[\nabla : \der_{\ke}(R) \to \End_\ke(V)\]
such that $\nabla_X(rv)=(Xr)v+(-1)^{|X||r|}r\nabla_Xv$, 
for $X \in \der_{\ke}(R), r \in R, v \in V$. 
For a connection $\nabla$ on $V$, a connection on $V[1]$ is defined by the formula 
$(-1)^{|X|}\nabla_X.$
For another graded $R$ module $W$ with a connection $\nabla$, graded $R$ modules
$V \oplus W, V \otimes W, \Hom_R(V, W)$ are also equipped with  connections.
These connections  are also denoted by $\nabla.$
Explicitly, the connections on $V \otimes W$ and $\Hom_R(V, W)$ are defined by the following formulas:
\begin{align*}
\nabla_X(v \otimes w)&=(\nabla_Xv) \otimes w + (-1)^{|X||v|}v \otimes (\nabla_Xw),\\
(\nabla_X f)(v)&=\nabla_X(f(v))-(-1)^{|f||X|}f(\nabla_X v).
\end{align*}
The curvature $R^{\nabla}(X,Y)$ is defined by 
\[\nabla_X\nabla_Y-(-1)^{|X||Y|}\nabla_Y \nabla_X-\nabla_{[X, Y]}.\]
A connection $\nabla$ is called {\it flat} if $\nabla$ is an endomorphism of a graded Lie algebra, i.e.,
$R^\nabla=0.$

Let $L$ be an $\LI$ algebra over $R$ and 
$\nabla$ be a connection on the graded $R$ module $L$.
Then $\nabla$ naturally gives connections on $EL$ and $\End_R(EL)$, which are also denoted by $\nabla.$
A connection $\nabla$ is called an {\it $\LI$ connection} if 
$\nabla_X (\,\widehat{\ell}\,)=0$ for any $X \in \der_{\ke}(R)$.
Explicitly, this equality is written as follows:
\[\nabla_X(\ell_k(y_1, \dots, y_k))=
  (-1)^{|X|} \sum_{i=1}^k (-1)^{(|y_1|'+\cdots+|y_{i-1}|')|X| }\ell_k(y_1, \dots, \nabla_Xy_i, \dots y_k).\] 
  
For a connection $\nabla$ on $L$, we define a morphism
$$
\wt{\nabla} : \der_\ke(R) \to \End_{\ke[[z]]}(\wt{L})
$$ 
by 
\begin{equation}\label{eq:Lconn}
\wt{\nabla}_X(y+\epsilon y'):=\nabla_X(y)+(-1)^{|X|}\epsilon \nabla_X(y').
\end{equation}
Note that $\nabla_X$ linearly extends as a $\ke[[z]]$ module morphism. 

\begin{definition}\label{def:CHconn}
Let $\wt{M}$ be a CH module over an $\LI$ algebra $L$ and 
$\nabla$ be an $\LI$ connection on $L$. 
A {\it CH connection on $\wt{M}$} is a morphism
$$
\wt{\nabla} : \der_\ke(R) \to \End_{\ke[[z]]}(\wt{M})
$$ 
such that $\wt{\nabla}$ is a connection on 
$\wt{M}$ regarded as a graded $R$ module and satisfies the following relations:
\begin{align*}
  \wt{\nabla}_X(\ell^{\wt{M}}_k(y_1, \dots, y_k|m))&=
  (-1)^{|X|} \sum_{i=1}^k (-1)^{(|y_1|'+\cdots+|y_{i-1}|')|X| }
  \ell^{\wt{M}}_k(y_1, \dots, \wt{\nabla}_Xy_i, \dots, y_k|m)\\
  &+(-1)^{(1+|y_1|'+\cdots+|y_{k}|')|X|}\ell^{\wt{M}}_k(y_1, \dots, y_k|\wt{\nabla}_Xm)
\end{align*}
for $y_i \in \wt{L}$ $($not $L[[z]])$. 
Here $\wt{\nabla}$ in the first term on the right hand side is the connection induced by the $\LI$ connection $\nabla$ on $L$ as in \eqref{eq:Lconn}.
\end{definition}    
\begin{remark}
For a connection $\wt{\nabla}$ on $\wt{M}$,
we define a connection on $\wt{M}[1]$ by 
$(-1)^{\vert X \vert} \wt{\nabla}$. We use the same symbol by abuse of notation.
%
%The ``degree one shift'' of $\wt{\nabla}$, 
%which is also denoted by $\wt{\nabla}$, is defined %similar to the case of connections. 
\end{remark}

\begin{definition}\label{def:chhompresconn}
Let $(\wt{M},\wt{\nabla})$ and $(\wt{N},\wt{\nabla})$ be CH modules 
with CH connections over an $\LI$ algebra $L$ with an $\LI$ connection $\nabla$.  
We say that a CH module morphism $\{f_k\}_{0 \le k}$ from $\wt{M}$ to $\wt{N}$ {\it preserves the CH connections} if 
$$
\wt{\nabla}_X \circ \check{f} = \check{f} \circ \wt{\nabla}_X  
$$
for any $X \in \der_{\ke}(R)$.
Here $\check{f} \in \Hom^{0}_{R[[z]]}(E\wt{L}\otimes\wt{M}[1], E\wt{L}\otimes\wt{N}[1])$ is  
the comodule morphism corresponding to $\{f_k\}_{0 \le k}$ 
and $\wt{\nabla}_X$ are the $\ke[[z]]$ linear morphisms induced from the $\LI$ connection
and CH connections.
\end{definition}

\subsection{Maurer-Cartan elements}\label{MC}
Let $L$ be a normed $\LI$ algebra over a graded complete normed ring $R.$
Let $\gamma \in L^1 \cong L[1]^0.$
If $\|\gamma\| <1,$ then we can define 
     \[\e^\gamma:=
     \sum_{k=0}^\infty\frac{1}{k!} \overbrace{\gamma \hodot \cdots \hodot \gamma}^k \in 
     \widehat{E}L^0.\]
This is a group like element, i.e.,
$\Delta \e^\gamma=\e^\gamma \hotimes \e^\gamma$ and $\epsilon(\e^\gamma)=1.$
Moreover, the morphism $\e^\gamma \cdot$ is an isomorphism with the inverse 
$\e^{-\gamma} \cdot.$
\begin{definition}
$\gamma \in L^1$ is called a {\it Maurer-Cartan element} of $L$ if $\|\gamma\| < 1$ and $\gamma$ satisfies 
$\hell(\e^\gamma)=0,$ i.e., 
\begin{align*}
\sum_{k=1}^\infty \ell_k(\gamma, \dots, \gamma)/k!=0. 
\end{align*}
This equation is called a {\it Maurer-Cartan equation}.
\end{definition}
For a Maurer-Cartan element $\gamma$ of $L$, we can twist $\LI$ structure of $L$ by 
$$
\hell^\gamma:=\e^{-\gamma} \hell \e^\gamma.
$$
Explicitly, the corresponding set of morphisms $\{\ell^\gamma_k\}_{1 \le k}$ is written as follows:
\[\ell^\gamma_k(y_1, \dots y_k):=
  \sum_{i=0}^\infty\frac{1}{i!}\ell_{k+i}(\overbrace{\gamma, \dots, \gamma}^i, y_1, \dots, y_k).\]
We note that the Maurer-Cartan equation for $\gamma$ implies  ``$\ell^\gamma_0=0$''. 
By construction, we easily see that $\{\ell^\gamma_k\}_{1 \le k}$ gives an $\LI$ algebra structure. 
To simplify notation, this $\LI$ algebra is denoted by $L^\gamma.$

Let $L'$ be another normed $\LI$ algebra and 
$\{f_k\}_{0 \le k}$ be a normed $\LI$ morphism from $L$ to $L'.$
For $\gamma \in L^1$ with $\|\gamma\|<1$, we see that $\e^f(\e^\gamma)$ is a group like element.    
Hence there exists a unique element $f_*\gamma \in EL'^0$ such that 
$\|f_*\gamma\|<1$ and $\e^f(\e^\gamma)=\e^{f_*\gamma}.$
Explicitly, we have 
\[f_*\gamma=\sum_{k=1}^\infty\frac{1}{k!}f_k(\gamma, \dots, \gamma).\]
Moreover, if $\gamma$ is a Maurer-Cartan element, 
then $f_*\gamma$ is also a Maurer-Cartan element. 
Set $\e^{f, \gamma}:=\e^{-f_*\gamma}\e^f\e^{\gamma},$
then we easily see that $\e^{f, \gamma}$ is a coalgebra morphism with
$\e^{f, \gamma}\circ \eta_{EL} =\eta_{EL'}$. 
(Here $\eta_{EV}$ denotes a unit of $EV$. See 
the beginning of Section \ref{slinf}.)
The corresponding set of morphisms is denoted by $\{f^\gamma_k\}_{0 \le k}.$
If $\gamma$ is a Maurer-Cartan element, then $\{f^\gamma_k\}_{0 \le k}$ gives a normed $\LI$ morphism from
$L^\gamma$ to $L'^{f_*\gamma}.$

%%%%%%%%%%%%%%%%%%%%%%%%%%%%%  CH MODULE  %%%%%%%%%%%%%%%%%%%%%%%%%%%%%%%%%%%%%%%%%%%
We next consider twists of normed CH modules.
Assume $\|R\|, \|L\| \le 1$.
Then $\wt{L}$ is also a normed $\LI$ algebra. 
If $\gamma$ is a Maurer-Cartan element of $L$, then we easily see that $\gamma$ is also a Maurer-Cartan element of $\wt{L}.$ 
Let $(\wt{M}, \{\ell^{\wt{M}}_k\}_{0 \le k})$ be a normed CH module over $L.$
By the left multiplication, $\e^\gamma$ gives an automorphism of 
$\widehat{E}\wt{L} \hotimes \wt{M}[1].$ 
We define $\{\ell^{\wt{M}, \gamma}_k\}_{0 \le k}$ by
$$
\widehat{\ell}^{\wt{M}, \gamma}:=\e^{-\gamma} \widehat{\ell}^{\wt{M}} \e^{\gamma}.
$$
Note that $\e^{-\gamma} \widehat{\ell}^{\wt{M}} \e^{\gamma}$ is a contractive coderivation of 
$\widehat{E}\wt{L}^\gamma \hotimes \wt{M}[1]$.
\begin{proposition}
For a Maurer-Cartan element $\gamma$ of $L$, the pair $(\wt{M}, \{\ell^{\wt{M}, \gamma}_k\})$ is a normed CH module over the normed $\LI$ algebra $L^\gamma.$
\end{proposition}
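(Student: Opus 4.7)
The plan is to verify, in three steps, that $(\wt{M}, \{\ell^{\wt{M},\gamma}_k\})$ satisfies the three requirements in the definition of a normed CH module over $L^\gamma$: the underlying graded complete normed $R[[z]]$ module structure on $\wt{M}$ is inherited from the original normed CH module; the twisted structure satisfies the CH axiom $(\widehat{\ell}^{\wt{M},\gamma})^2(F^a)\subset F^{a+2}$; and the twisted structure is contractive.

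As a preliminary, I would make explicit the identification $\wt{L^\gamma}=\wt{L}^\gamma$ as $\LI$ algebras on the underlying $R[[z]]$ module $L[[z]]\oplus\epsilon L[[z]]$. Since $\gamma\in L^1$ has zero $\epsilon$-component, applying \eqref{eq:tildl} to $\wt{\ell}_{k+i}(\gamma,\dots,\gamma,y_1+\epsilon y_1',\dots,y_k+\epsilon y_k')$ and then summing over $i\ge 0$ weighted by $1/i!$ recovers exactly the tilde of $\ell^\gamma_k$; the $zy_1'$ correction appears only from $k+i=1$, i.e.\ only in arity $k=1$, as required. This identification guarantees that the contractive coderivation $\widehat{\ell}^{\wt{M},\gamma}$ (singled out in the excerpt on $\widehat{E}\wt{L}^\gamma\hotimes\wt{M}[1]$) genuinely defines a coderivation on the comodule $\widehat{E}\wt{L^\gamma}\hotimes\wt{M}[1]$ appropriate for a CH module over $L^\gamma$.

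The main observation for the CH axiom is that $\gamma\in F^0\wt{L}[1]$, so each $\gamma^{\odot i}$ lies in $F^0$ of the corresponding tensor power, and hence left multiplication by $\e^{\pm\gamma}$ on $\widehat{E}\wt{L}\hotimes\wt{M}[1]$ preserves the filtration $F^a$. Combining this with $\e^{\gamma}\e^{-\gamma}=\id$ one computes
\[\widehat{\ell}^{\wt{M},\gamma}\circ\widehat{\ell}^{\wt{M},\gamma}=\e^{-\gamma}\circ(\widehat{\ell}^{\wt{M}}\circ\widehat{\ell}^{\wt{M}})\circ\e^{\gamma}.\]
Evaluating this on $F^a$ and using the original CH hypothesis $(\widehat{\ell}^{\wt{M}})^2(F^a)\subset F^{a+2}$ together with the filtration-preservation of $\e^{\pm\gamma}$, we immediately obtain $(\widehat{\ell}^{\wt{M},\gamma})^2(F^a)\subset F^{a+2}$.

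Contractivity follows similarly: the hypothesis $\no{\gamma}<1$ and $\no{1/i!}\le 1$ (valid since $\ke\supset\Q$ is non-archimedean) give $\no{\gamma^{\odot i}/i!}\le \no{\gamma}^i\to 0$, so $\e^{\pm\gamma}$ converge and are contractive on the completed tensor product; composing them with the contractive $\widehat{\ell}^{\wt{M}}$ preserves contractivity of $\widehat{\ell}^{\wt{M},\gamma}$. The only nonroutine point is the compatibility $\wt{L^\gamma}=\wt{L}^\gamma$ in the second paragraph; once that is in hand, the CH axiom and contractivity reduce to the basic fact that multiplication by a Maurer-Cartan element carrying no $\epsilon$-part is filtration-preserving and contractive.
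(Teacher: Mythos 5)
Your proof is correct and follows essentially the same route as the paper's: both rest on the identity $\widehat{\ell}^{\wt{M},\gamma}\circ\widehat{\ell}^{\wt{M},\gamma}=\e^{-\gamma}\circ(\widehat{\ell}^{\wt{M}}\circ\widehat{\ell}^{\wt{M}})\circ\e^{\gamma}$ together with the fact that left multiplication by $\e^{\pm\gamma}$ preserves the filtration (and is contractive) because $\gamma$ has no $\epsilon$-component and $\no{\gamma}<1$. Your additional check that $\widetilde{L^{\gamma}}=\wt{L}^{\gamma}$ is a sensible elaboration of a point the paper leaves implicit, and does not change the argument.
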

\begin{proof}
By the definition, we have 
$\widehat{\ell}^{\wt{M}, \gamma} \circ \widehat{\ell}^{\wt{M}, \gamma}=
   \e^{-\gamma} \widehat{\ell}^{\wt{M}} \circ \widehat{\ell}^{\wt{M}} \e^\gamma.$
Since the automorphism $\e^\gamma $ preserves the filtration on 
$\widehat{E}\wt{L}^\gamma \hotimes \wt{M}[1],$ we see that
$\widehat{\ell}^{\wt{M}, \gamma} \circ \widehat{\ell}^{\wt{M}, \gamma}(F^a) \subset F^{a+2}.$  
\end{proof}
This normed CH module is denoted by 
$\wt{M}^\gamma$. 
Accordingly, the corresponding maps in 
\eqref{ch3.2.2}, \eqref{ch3.3} are denoted by 
\begin{equation}\label{dgamma}
d^{\gamma}, ~ {\LL}_y^{\gamma}, ~ \io_{y_1}^{\gamma}, ~ \rho_{y_1, y_2}^{\gamma}.
\end{equation}
\par
Let $\wt{N}$ be another normed CH module over $L$ and
$\{f_k\}_{0 \le k}$ be a normed CH morphism from  $\wt{M}$ to $\wt{N}$.
Set 
\begin{equation}\label{fgamma}
\check{f}^\gamma:=\e^{-\gamma} \check{f}\e^\gamma.
\end{equation}
This is a contractive comodule morphism and the corresponding set of morphisms is denoted by $\{f^\gamma_k\}_{0 \le k}$.
\begin{proposition}
$\{f^\gamma_k\}_{0 \le k}$ is a normed CH morphism from $\wt{M}^\gamma$ to $\wt{N}^\gamma.$
\end{proposition}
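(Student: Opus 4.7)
The plan is to conjugate the CH morphism condition on $\{f_k\}$ by $\e^\gamma$ and observe that the filtration property survives the conjugation because $\gamma \in L^1 \subset L[1][[z]] = F^0$ contains no $\epsilon$.

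First, I would unwind the definitions. By the definition of $\wt{M}^\gamma$ and $\wt{N}^\gamma$, we have $\widehat{\ell}^{\wt{M},\gamma} = \e^{-\gamma} \widehat{\ell}^{\wt{M}} \e^{\gamma}$ and $\widehat{\ell}^{\wt{N},\gamma} = \e^{-\gamma} \widehat{\ell}^{\wt{N}} \e^{\gamma}$, and by \eqref{fgamma} we have $\check{f}^\gamma = \e^{-\gamma} \check{f} \e^\gamma$. A direct multiplication, using the fact that $\e^{\gamma} \e^{-\gamma} = \id$ on $\widehat{E}\wt{L} \hotimes \wt{M}[1]$, then gives
\begin{equation*}
\widehat{\ell}^{\wt{N},\gamma} \circ \check{f}^\gamma - \check{f}^\gamma \circ \widehat{\ell}^{\wt{M},\gamma}
= \e^{-\gamma}\bigl(\widehat{\ell}^{\wt{N}} \circ \check{f} - \check{f} \circ \widehat{\ell}^{\wt{M}}\bigr)\e^{\gamma}.
\end{equation*}

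Next I would check that left multiplication by $\e^{\pm\gamma}$ preserves the filtration $F^\bullet$ on $\widehat{E}\wt{L} \hotimes \wt{M}[1]$ (and on $\widehat{E}\wt{L} \hotimes \wt{N}[1]$). Since $\gamma \in L^1$ and we identify $\gamma$ with its image in $\wt{L}[1]$, the element $\gamma$ lies in $F^0$ of $\wt{L}[1]$; multiplying a tensor by $\gamma$ prepends a factor with zero $\epsilon$-count, which does not decrease the $F$-degree. Hence $\e^{\gamma}(F^a) \subset F^a$ and likewise for $\e^{-\gamma}$. Combining this with Definition \ref{def:CHhom} applied to the original morphism, for any $a \in \Z$ we obtain
\begin{equation*}
\bigl(\widehat{\ell}^{\wt{N},\gamma} \circ \check{f}^\gamma - \check{f}^\gamma \circ \widehat{\ell}^{\wt{M},\gamma}\bigr)(F^a)
\subset \e^{-\gamma}\bigl(\widehat{\ell}^{\wt{N}} \circ \check{f} - \check{f} \circ \widehat{\ell}^{\wt{M}}\bigr)(F^a)
\subset \e^{-\gamma}(F^{a+2}) \subset F^{a+2},
\end{equation*}
which is precisely the CH morphism condition for $\{f^\gamma_k\}$ viewed as a map from $\wt{M}^\gamma$ to $\wt{N}^\gamma$ over $L^\gamma$.

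Finally, I would verify contractivity. Since $\|\gamma\| < 1$, the morphisms $\e^{\pm\gamma}$ are contractive, and $\check{f}$ is contractive by assumption; hence $\check{f}^\gamma = \e^{-\gamma} \check{f} \e^{\gamma}$ is contractive, so $\{f^\gamma_k\}$ is a normed CH morphism. I do not foresee any real obstacle beyond the routine filtration check; the only subtlety is confirming that $\gamma$ indeed sits in $F^0$ rather than in any lower piece of the filtration, which is immediate from the definition of $\wt{L}$ as $L[[z]] \oplus \epsilon L[[z]]$ and the inclusion $L \subset L[[z]]$.
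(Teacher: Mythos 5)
Your proposal is correct and follows essentially the same route as the paper: the key identity $\widehat{\ell}^{\wt{N},\gamma}\circ\check{f}^\gamma-\check{f}^\gamma\circ\widehat{\ell}^{\wt{M},\gamma}=\e^{-\gamma}(\widehat{\ell}^{\wt{N}}\circ\check{f}-\check{f}\circ\widehat{\ell}^{\wt{M}})\e^{\gamma}$ together with the fact that left multiplication by $\e^{\pm\gamma}$ preserves the filtration (already used in the paper for the twisted CH module) is exactly the paper's argument. You merely make explicit the filtration and contractivity checks that the paper leaves implicit.
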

\begin{proof}
By definition, we have
\[\widehat{\ell}^{\wt{N}, \gamma} \circ \check{f}^\gamma-\check{f}^\gamma \circ \widehat{\ell}^{\wt{M}, \gamma}=
\e^{-\gamma} (\widehat{\ell}^{\wt{N}} \circ \check{f}-\check{f} \circ \widehat{\ell}^{\wt{M}})\e^\gamma,\]
which implies the proposition.  
\end{proof}
For the normed CH morphism $\{f^\gamma_k\}_{0 \le k}$, the corresponding maps in \eqref{chmor3.7} are denoted by 
\begin{equation}\label{chmor5.4}
F^\gamma_y, ~F_y^{\epsilon, \gamma}.
\end{equation}

%%%%%%%%%%%%%%%%%%%%%%%%%%   GETZLER GAUSS MANIN   %%%%%%%%%%%%%%%%%%%%%%%%%%%%%%%%
\subsection{Getzler-Gauss-Manin connections}\label{subge}
In \cite{getcar}, Getzler constructed a connection on periodic cyclic homology of $A_\infty$ algebras.
Following Barannikov's idea outlined in \cite[Remark 3.3]{barqua},
Tsygan reformulated this connection in terms of $\LI$ modules in \cite{tsygau}.
We adapt Tsygan's reformulation and introduce similar connections on 
our normed CH modules.

In the rest of Section \ref{sec:conn}, let $\ke$ be a complete normed ring and $R$ be a graded complete normed algebra over $\ke$ unless otherwise mentioned.
Let $L$ be a normed $\LI$ algebra over $R$ with an $\LI$ connection $\nabla$. 
We assume that $\no{\ke}, \no{R}, \no{L} \le 1$. 
Let $\wt{M}$ be a normed CH module over $L$ with a CH connection $\wt{\nabla}$.
Let $\gamma \in L^1$ be a Maurer-Cartan element. 
\begin{definition}\label{defggm}
We define a morphism $z\nabla^\ggm : \der_\ke(R) \to \End_{\ke[[z]]}(\wt{M})$ as follows:
\begin{align*}
z\nabla^\ggm_X:=z\wt{\nabla}_X-(-1)^{|X|}\io^\gamma_{\nabla_X\gamma}.
\end{align*}
This morphism is called a {\it Getzler-Gauss-Manin connection} (GGM connection for short).
\end{definition}
\begin{remark}
See \eqref{ch3.3} and Remark \ref{lsign} for the definition of $\io$. 
Since $\io$ is a morphism from $L\otimes \wt{M},$ 
the connection $\nabla$ in the term $\io^\gamma_{\nabla_X\gamma}$ should be considered as 
a connection on $L$ $($not a connection on $L[1]$$)$. 
\end{remark}
Since $\io^\gamma_{\nabla_X\gamma} \in \End^{2+|X|}_{R[[z]]}(\wt{M}),$         
we see that $z\nabla^\ggm$ satisfies the ``Leibniz rule'' in the following form: 
\[z\nabla^\ggm_X(rm)=z(Xr)m+(-1)^{|X||r|}rz\nabla^\ggm_Xm.\]  
Thus the GGM connection $z\nabla^\ggm$ itself is not a connection in the sense of
the beginning of Subsection \ref{5.1}, but later 
$\nabla^\ggm := (z\nabla^\ggm)/z$ will be regarded as a meromorphic connection and 
also called a GGM connection.      

\begin{example}\label{5.10}
({\bf Continued from Example \ref{hypcom}}.)
Let $A$ be a finite-dimensional graded vector space over a field $\ke$ and choose a homogeneous basis $\{T_0, \dots, T_m\}$. 
%Let $\deg$ be the degree operator of $A$, i.e.,
%$\deg x:=|x|x$ for each homogeneous element $x \in A$.
Assume that $A^e:=A((e))$ be a hypercommutative algebra over $\ke^e:=\ke ((e))$.
(See Example \ref{4.3} (1) for the notation.)
%The linear extension of $\deg$ is denoted by the same symbol.
%Note that $\deg$ is not the degree operator of $A^e$.
%An element $c_1\in A^2$ is called an Euler element if $(x_1, \cdots, x_n, c_1)=\deg(x_1, \dots, x_n).$
Choose formal variables $t_0, \dots, t_m$ with degrees $2-|T_0|, \dots, 2-|T_m|$ respectively.
Set $R:=\ke^e[[t]].$
Then $A^e[[t]]$ is naturally considered as a hypercommutative algebra over $R$ equipped with the trivial connection.
By Example \ref{hypcom}, $A^e[[t]]$ is a CH module over $A^e[[t]][1].$
We define a Maurer Cartan element $\gamma$ to be ${\bf t}=\sum_{i=0}^mT_i t_i$.
Set 
\[(T_{i_1}, \dots, T_{i_k})^\gamma:=
  \sum_{l=0}^\infty\frac{1}{l!}(\overbrace{\gamma, \dots, \gamma}^l, T_{i_1}, \dots, T_{i_k}), \ 
T_i *_{{\bf t}} T_j:=(T_i, T_j)^\gamma. \]
Then $A^e[[t]]$ with operators $(\cdot, \dots, \cdot)^\gamma$ is a hypercommutative algebra.
In this setting we see that 
$z\nabla^\ggm_{\frac{\partial}{\partial t_i}}=z\frac{\partial}{\partial t_i}+T_i *_{\bf t}$.
This gives a reinterpretation of Dubrovin's connection in \cite{dubint}. 
\end{example}

To show that GGM connection defines a morphism on cohomology, we use the following lemmas:
\begin{lemma}\label{5.6}
$\ell^{\gamma}_1(\nabla_X\gamma)=0.$
\end{lemma}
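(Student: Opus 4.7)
The idea is simply to differentiate the Maurer--Cartan equation
$$\sum_{k=1}^\infty \frac{1}{k!}\,\ell_k(\gamma,\dots,\gamma)=0$$
by the vector field $X$ and exploit the fact that $\nabla$ is an $L_\infty$ connection. Since $|\gamma|=1$, the shifted degree $|\gamma|'=0$, so all the Koszul signs that would otherwise appear from interchanging $\nabla_X\gamma$ with copies of $\gamma$ in the graded symmetric product $L[1]^{\odot k}$ collapse to $+1$. This will make the computation essentially sign-free.

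\textbf{Step 1.} Apply $\nabla_X$ to the Maurer--Cartan equation. Using $\ke$-linearity of $\nabla_X$ together with the defining property of an $L_\infty$ connection (Definition of $L_\infty$ connection applied to each $\ell_k$), I get
\begin{align*}
0 &= \nabla_X\!\left(\sum_{k=1}^\infty \tfrac{1}{k!}\,\ell_k(\gamma,\dots,\gamma)\right) \\
  &= (-1)^{|X|}\sum_{k=1}^\infty \tfrac{1}{k!}\sum_{i=1}^k (-1)^{(|\gamma|'+\cdots+|\gamma|')|X|}\,\ell_k(\gamma,\dots,\nabla_X\gamma,\dots,\gamma).
\end{align*}
The exponent in the inner sign is $(i-1)\cdot 0\cdot|X|=0$, hence the sign is $+1$ for every $i$.

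\textbf{Step 2.} Use graded symmetry. Since $\gamma$ has shifted degree $0$, moving $\nabla_X\gamma$ through copies of $\gamma$ in $L[1]^{\odot k}$ picks up no sign, so each of the $k$ summands equals $\ell_k(\nabla_X\gamma,\gamma,\dots,\gamma)$. The displayed identity therefore becomes
$$0=(-1)^{|X|}\sum_{k=1}^\infty \frac{k}{k!}\,\ell_k(\nabla_X\gamma,\gamma,\dots,\gamma)
 =(-1)^{|X|}\sum_{j=0}^\infty \frac{1}{j!}\,\ell_{j+1}(\nabla_X\gamma,\underbrace{\gamma,\dots,\gamma}_{j}),$$
where I set $j=k-1$.

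\textbf{Step 3.} Recognize the sum. By the very definition of the twisted operations,
$$\ell_1^\gamma(\nabla_X\gamma)=\sum_{j=0}^\infty \frac{1}{j!}\,\ell_{1+j}(\gamma,\dots,\gamma,\nabla_X\gamma),$$
and by graded symmetry (again using $|\gamma|'=0$) the order is immaterial, so this coincides with the sum obtained in Step 2 up to the overall sign $(-1)^{|X|}$. Dividing by this unit yields $\ell_1^\gamma(\nabla_X\gamma)=0$.

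There is essentially no obstacle: the lemma is an infinitesimal form of ``the Maurer--Cartan locus is preserved by $\nabla$'', and the only thing to check is the sign bookkeeping, which is trivialized by the fact that $\gamma$ sits in the ``sweet spot'' $|\gamma|'=0$ of the shifted grading. If one were to try to prove an analogous statement for an element of different degree, the signs would of course not be as clean, but for a genuine Maurer--Cartan element the computation is immediate.
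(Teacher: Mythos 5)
Your proof is correct and follows essentially the same route as the paper: apply $\nabla_X$ to the Maurer--Cartan equation, use the $\LI$ connection identity together with the fact that $|\gamma|'=0$ kills all Koszul signs, and identify the resulting sum with $\ell_1^\gamma(\nabla_X\gamma)$. The paper's proof is just a more compressed version of the same computation.
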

\begin{proof}
Since $|\gamma|=1,$ we easily see that 
$\nabla_X \ell_k(\gamma, \dots, \gamma)=
 (-1)^{|X|}k\ell_k(\gamma, \dots, \gamma, \nabla_X\gamma)$.
 Hence we have 
 \[\ell^{\gamma}_1(\nabla_X\gamma)=
  \sum_{k=0}^\infty \ell_{1+k}(\gamma, \dots, \gamma, \nabla_X\gamma)/k!=
 (-1)^{|X|}\nabla_X \left( \sum_{k=1}^\infty \ell_k(\gamma, \dots, \gamma)/k! \right)=0.\]
\end{proof}
\begin{lemma}\label{5.7}  
$[\wt{\nabla}_X, \ell^{\wt{M}, \gamma}_0]=
   (-1)^ {|X|}\ell^{\wt{M}, \gamma}_1({\nabla}_X\gamma|\,\cdot\,).$
\end{lemma}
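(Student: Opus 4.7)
The approach is to unwind $\ell^{\wt{M},\gamma}_0$ via the twist formula from Subsection \ref{MC},
\begin{equation*}
\ell^{\wt{M},\gamma}_0(m) = \sum_{k=0}^{\infty} \frac{1}{k!}\,\ell^{\wt{M}}_k(\gamma,\dots,\gamma | m),
\end{equation*}
apply $\wt{\nabla}_X$ term by term, and invoke the CH-connection compatibility from Definition \ref{def:CHconn} with every input equal to $\gamma$.

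The decisive simplification is that $\gamma \in L^1$, so the shifted degree satisfies $|\gamma|' = 0$. Two things follow at once. First, every Koszul prefactor of the form $(-1)^{(|y_1|' + \cdots + |y_{i-1}|')|X|}$ appearing in Definition \ref{def:CHconn} collapses to $1$, and the prefactor in front of the $\wt{\nabla}_X m$-term collapses to $(-1)^{|X|}$. Second, by the graded symmetry of $\ell^{\wt{M}}_k$ on its $L[1]$-inputs (and because $\wt{\nabla}_X\gamma = \nabla_X \gamma$ since $\gamma$ has no $\epsilon$-component), the $k$ summands in which a single $\gamma$ is replaced by $\nabla_X\gamma$ all coincide with one fixed term, so the sum equals $k$ times that term. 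Combining these, for each $k$,
\begin{equation*}
\wt{\nabla}_X \ell^{\wt{M}}_k(\gamma,\dots,\gamma | m) = (-1)^{|X|}\, k\, \ell^{\wt{M}}_k(\gamma,\dots,\gamma,\nabla_X\gamma | m) + (-1)^{|X|}\, \ell^{\wt{M}}_k(\gamma,\dots,\gamma | \wt{\nabla}_X m).
\end{equation*}
Summing over $k \geq 0$ with weight $1/k!$, the second contribution reassembles into $(-1)^{|X|} \ell^{\wt{M},\gamma}_0(\wt{\nabla}_X m)$. For the first, the factor $k/k! = 1/(k-1)!$ together with the reindexing $j = k-1$ transforms the sum into $(-1)^{|X|}\ell^{\wt{M},\gamma}_1(\nabla_X\gamma | m)$ by direct comparison with the formula $\ell^{\wt{M},\gamma}_1(y | m) = \sum_{i \geq 0} \frac{1}{i!}\ell^{\wt{M}}_{1+i}(\gamma,\dots,\gamma,y | m)$. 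Rearranging, and noting that $\ell^{\wt{M},\gamma}_0$ has shifted degree $+1$ so that the graded commutator reads $[\wt{\nabla}_X,\ell^{\wt{M},\gamma}_0] = \wt{\nabla}_X \ell^{\wt{M},\gamma}_0 - (-1)^{|X|} \ell^{\wt{M},\gamma}_0 \wt{\nabla}_X$, yields the claimed identity.

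The main obstacle is pure sign bookkeeping under the abuse-of-notation convention (stated in the remark after Definition \ref{def:CHconn}) that $\wt{\nabla}_X$ on $\wt{M}[1]$ differs from its avatar on $\wt{M}$ by $(-1)^{|X|}$; one must check that every intermediate $(-1)^{|X|}$-prefactor aligns consistently with the final statement. A tidier alternative would be to work coalgebraically: rewrite $\widehat{\ell}^{\wt{M},\gamma} = \e^{-\gamma}\widehat{\ell}^{\wt{M}}\e^{\gamma}$, exploit $\wt{\nabla}_X(\e^\gamma) = \e^\gamma \cdot \nabla_X\gamma$ (immediate from $|\gamma|' = 0$ and the Leibniz rule for the derivation $\wt{\nabla}_X$ on $\widehat{E}\wt{L}$) together with $[\wt{\nabla}_X,\widehat{\ell}^{\wt{M}}] = 0$ (the coderivation reformulation of Definition \ref{def:CHconn}), and project onto the $E_0\wt{L} \otimes \wt{M}[1]$-component. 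Both routes are elementary; the direct one above appears shortest.
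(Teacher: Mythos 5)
Your proof is correct and follows essentially the same route as the paper: expand $\ell^{\wt{M},\gamma}_0=\sum_k \ell^{\wt{M}}_k(\gamma,\dots,\gamma|\,\cdot\,)/k!$, apply the CH-connection compatibility with all inputs equal to $\gamma$, use $|\gamma|'=0$ and symmetry to get the factor $k$, and reindex via $k/k!=1/(k-1)!$ to recognize $\ell^{\wt{M},\gamma}_1(\nabla_X\gamma|\,\cdot\,)$. The paper's proof is a one-line version of exactly this computation (mirroring its proof of Lemma \ref{5.6}), so no further comparison is needed.
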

\begin{proof}
We see that
\begin{align*} [\wt{\nabla}_X, \ell^{\wt{M}, \gamma}_0]&=
    [\wt{\nabla}_X, \sum_{k=0}^\infty \ell^{\wt{M}}_k(\gamma, \dots, \gamma|\,\cdot\,)/k!\,]=
    (-1)^{|X|}\sum_{k=0}^\infty 
                \ell^{\wt{M}}_{1+k}(\gamma, \dots, \gamma, {\nabla}_X \gamma|\, \cdot\,)/k! \\
                &=(-1)^ {|X|}\ell^{\wt{M}}_1({\nabla}_X\gamma|\,\cdot\,).
    \end{align*}
\end{proof}
\begin{proposition}\label{ggmcoh}
$[d^\gamma, z\nabla^\ggm_X]=0$.
\end{proposition}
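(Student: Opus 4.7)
The plan is to compute the graded commutator $[d^\gamma, z\nabla^\ggm_X]$ by splitting it along the two summands in the definition of $z\nabla^\ggm_X$, and then handle each piece using the two lemmas already proved. First, by linearity,
\[ [d^\gamma, z\nabla^\ggm_X] = z\,[d^\gamma, \wt{\nabla}_X] - (-1)^{|X|}\,[d^\gamma, \io^\gamma_{\nabla_X\gamma}]. \]
So the whole problem reduces to identifying these two brackets and showing that they cancel.

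For the first bracket, Lemma \ref{5.7} gives directly
\[ [\wt{\nabla}_X, \ell^{\wt{M},\gamma}_0] = (-1)^{|X|}\,\ell^{\wt{M},\gamma}_1(\nabla_X\gamma|\,\cdot\,), \]
and since $d^\gamma = -\ell^{\wt{M},\gamma}_0$ with $|d^\gamma|=1$, a short sign check converts this into $[d^\gamma, \wt{\nabla}_X] = \ell^{\wt{M},\gamma}_1(\nabla_X\gamma|\,\cdot\,)$. For the second bracket, I would invoke the Cartan-homotopy relation \eqref{ch3.4} applied to the twisted CH module $(\wt{M}^\gamma, \{\ell^{\wt{M},\gamma}_k\})$ over $L^\gamma$, which reads
\[ [d^\gamma, \io^\gamma_y] + \io^\gamma_{\delta^\gamma y} + z\LL^\gamma_y = 0, \]
now specialized to $y = \nabla_X\gamma$. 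Lemma \ref{5.6} asserts $\ell^\gamma_1(\nabla_X\gamma)=0$, so $\delta^\gamma(\nabla_X\gamma)=0$ and the middle term drops out; using $|\nabla_X\gamma|=|X|+1$ together with $\LL^\gamma_y = (-1)^{|y|}\ell^{\wt{M},\gamma}_1(y|\,\cdot\,)$ yields
\[ [d^\gamma, \io^\gamma_{\nabla_X\gamma}] = -z\LL^\gamma_{\nabla_X\gamma} = z(-1)^{|X|}\,\ell^{\wt{M},\gamma}_1(\nabla_X\gamma|\,\cdot\,). \]
Substituting both identities into the expansion, the factor $(-1)^{|X|}\cdot(-1)^{|X|}=1$ in the second term exactly cancels the first, giving the conclusion $[d^\gamma, z\nabla^\ggm_X]=0$.

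The principal technical point to justify is that \eqref{ch3.4} remains available for the twisted data $(\wt{M}^\gamma,L^\gamma)$ even though the simplifying hypotheses \eqref{ch3.1} (in particular the vanishing of higher $\ell_k$'s) need not persist under a Maurer-Cartan twist. This is handled by observing that \eqref{ch3.4} arises as the $(\epsilon \otimes \id)$-projection of $(\widehat{\ell}^{\wt{M}})^2$ evaluated on a single element $\epsilon y \otimes m \in F^{-1}$; since this starting datum contains only one $\wt{L}$-input, no higher module operation $\ell^{\wt{M}}_k$ ($k\ge 2$) can contribute to the output, and the identity therefore holds for any CH module in general, in particular for $\wt{M}^\gamma$. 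The remainder of the argument is routine sign bookkeeping.
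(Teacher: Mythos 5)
Your proof is correct and follows essentially the same route as the paper: split $z\nabla^\ggm_X$ into its two summands, use Lemma \ref{5.7} for the bracket with $z\wt{\nabla}_X$, and use relation \eqref{ch3.4} together with Lemma \ref{5.6} for the bracket with $\io^\gamma_{\nabla_X\gamma}$. Your extra observation that \eqref{ch3.4} holds for the twisted module because it is the $(\epsilon\otimes\id)$-projection of $(\widehat{\ell}^{\wt{M}})^2$ on a single input $\epsilon y\otimes m\in F^{-1}$ is a correct justification of a point the paper leaves implicit.
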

\begin{proof}
By Equation (\ref{ch3.4}) and Lemma \ref{5.6}, we have 
$[d^{\gamma}, \io^\gamma_{\nabla_X\gamma}]+z\LL^\gamma_{\nabla_X\gamma}=0.$
By Lemma \ref{5.7}, we have 
$[d^\gamma, z\wt{\nabla}_X]=
(-1)^{1+|X|}z\LL^\gamma_{\nabla_X\gamma}.$
This proposition follows from these equations.
\end{proof}
%%%%%%%%%%%%%%%%%%%%%%%%%%  EULER CONNECTION %%%%%%%%%%%%%%%%%%%%%%%%%%%%%%%%%%%%%
\subsection{Euler connections}\label{subeu}
In \cite[\S 2.2.5]{kkp1}, Katzarkov-Kontsevich-Pantev introduced a connection in the ``$u$-direction'' on a cyclic homology (see also \cite{shknon}). 
Inspired by their construction
(but different from theirs as noted in Remark \ref{rmk:KKP}), we extend a GGM connection 
to a ``connection in the $z$-direction''.
In different contexts, 
similar constructions have been considered by many people (e.g., \cite[\S 4.1]{irimir}, \cite{seicon} 
and see also Remark \ref{rmk:GPSS}).

In \S\ref{subeu}, we continue to use the same notations as in \S\ref{subge}.
We note that 
\[\der_\ke(R) \cong \left\{ X \in \der_\ke(R[[z]])\relmiddle{|}Xz=0\right\} \subset \der_\ke(R[[z]])\]
and $\der_\ke(R[[z]])=\der_\ke(R) \oplus \langle \frac{d}{dz} \rangle.$
Let $\deg \in \End^0_\ke(\wt{M})$ be the degree operator, i.e., $\deg m := |m|m.$ 
\begin{definition}\label{def:Eu}
We define a morphism $z^2\nabla^{\eul} : \der_\ke(R[[z]]) \to \End_\ke(\wt{M})$ as follows:
\begin{align*}
z^2 \nabla^\eul_X&:=z^2\nabla^\ggm_X ,  \\
z^2 \nabla^\eul_{\frac{d}{dz}}&:=\frac{z}{2}\deg-z\nabla^\ggm_E,
\end{align*}
where $X \in \der_\ke(R)$ and $E$ is the Euler vector of $R.$
This morphism is called an {\it Euler connection}.
\end{definition}
Since $\deg(rm)=2(Er)m+2(z\frac{d}{dz}r)m+r\deg m$ and
$z\nabla^\ggm_E rm=z(Er)m+rz\nabla^\ggm_Em,$
we see that $\nabla^\eul_\frac{d}{dz}$ satisfies the following ``Leibniz rule'':
\[z^2\nabla^\eul_{\frac{d}{dz}}(rm)=(z^2\frac{d}{dz}r)m+rz^2\nabla^\eul_\frac{d}{dz}m.\]
\begin{proposition}[{cf. \cite[\S 2.2]{shknon}}]
$[z^2 \nabla^\eul_\frac{d}{dz}, d^\gamma]=\frac{z}{2}d^\gamma.$
\end{proposition}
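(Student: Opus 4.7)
The plan is to unfold Definition \ref{def:Eu} and dispatch the two summands of $z^2\nabla^\eul_{d/dz}$ separately. First I would write
\[
[z^2\nabla^\eul_{d/dz}, d^\gamma] \;=\; \tfrac{1}{2}[z\deg, d^\gamma] \;-\; [z\nabla^\ggm_E, d^\gamma],
\]
and observe that the Euler vector field $E$ belongs to $\der_\ke^0(R)$, so Proposition \ref{ggmcoh} applied with $X = E$ kills the second commutator outright. This reduces the identity to checking $[\tfrac{z}{2}\deg, d^\gamma] = \tfrac{z}{2} d^\gamma$.

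For this remaining grading piece I would first confirm that $d^\gamma$ is a homogeneous operator of degree $1$ on $\wt{M}$. Since $d = -\ell_0^{\wt M}$ has degree $1$ by construction and the Maurer-Cartan element $\gamma$ sits in degree $1$, the twisting formula for $\{\ell^{\wt{M},\gamma}_k\}_{0\le k}$ preserves degrees, so $d^\gamma$ remains of degree $1$. Then on a homogeneous $m \in \wt M$ the standard computation
\[
[\deg, d^\gamma]m \;=\; (|m|+1)\,d^\gamma m \;-\; |m|\,d^\gamma m \;=\; d^\gamma m
\]
shows $[\deg, d^\gamma] = d^\gamma$; multiplying by the central, even scalar $\tfrac{z}{2}$ yields $[\tfrac{z}{2}\deg, d^\gamma] = \tfrac{z}{2} d^\gamma$. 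Combining the two pieces gives $[z^2\nabla^\eul_{d/dz}, d^\gamma] = \tfrac{z}{2} d^\gamma$.

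I do not expect any serious obstacle. The only subtlety worth double-checking is the sign bookkeeping in the graded commutators: since $z\deg$ has total degree $2$ (even) and $z\nabla^\ggm_E$ has degree $|z|+|E| = 2$ (even) while $d^\gamma$ has degree $1$, in both cases the graded commutator collapses to the ordinary commutator, so no unexpected signs appear. All the real content of the statement is already contained in Proposition \ref{ggmcoh}; the rest is the elementary fact that $[\deg, -]$ measures the degree of a homogeneous operator.
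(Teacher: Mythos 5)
Your proposal is correct and follows essentially the same route as the paper: split $z^2\nabla^\eul_{d/dz}$ into its two summands, kill $[z\nabla^\ggm_E,d^\gamma]$ by Proposition \ref{ggmcoh}, and use that $d^\gamma$ is a degree-one operator so that $[\deg,d^\gamma]=d^\gamma$. The extra checks on the degree of $d^\gamma$ and on signs are fine but not needed beyond what the paper records.
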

\begin{proof}
Since $d^\gamma$ is a degree one operator, we see $[\deg, d^\gamma]=d^\gamma.$
Hence this proposition follows from Proposition \ref{ggmcoh}.
\end{proof}
From this lemma, we easily see that $z^2 \nabla^\eul$ descends to a morphism on cohomology.
Set $\wt{M}((z)):=\wt{M} \otimes_{R[[z]]}R((z)).$
By abuse of notation, we consider $\nabla^\ggm, \nabla^\eul$ as meromorphic connections (on complexes or cohomology).
\begin{remark}
The Euler connection has an irregular singularity at $z=0,$ but formal in the $z$-direction. 
Hence we can not consider the ``Stokes structure'' of the Euler connection.  
\end{remark}
%%%%%%%%%%%%%%%%%%%%%%%%%%%% FLATNESS OF EULER %%%%%%%%%%%%%%%%%%%%%%%%%%%%%%%%%%
\begin{proposition}\label{flat}
If $\nabla^\ggm$ is flat on cohomology, then $\nabla^\eul$ is also flat on cohomology.  
\end{proposition}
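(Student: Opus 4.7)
The plan is to verify that the graded curvature $R^{\nabla^\eul}$ vanishes on cohomology for all pairs of derivations in $\der_\ke(R[[z]]) = \der_\ke(R) \oplus \langle \tfrac{d}{dz} \rangle$. First I dispose of the easy cases. For $X, Y \in \der_\ke(R)$, the bracket $[X, Y]$ lies in $\der_\ke(R)$, and by the very definition of $\nabla^\eul$ we have $\nabla^\eul_X = \nabla^\ggm_X$, $\nabla^\eul_Y = \nabla^\ggm_Y$, and $\nabla^\eul_{[X,Y]} = \nabla^\ggm_{[X,Y]}$; hence $R^{\nabla^\eul}(X,Y) = R^{\nabla^\ggm}(X,Y)$, which vanishes on cohomology by hypothesis. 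Since $\tfrac{d}{dz}$ has even degree $-2$, the graded self-bracket $[\tfrac{d}{dz},\tfrac{d}{dz}]$ is zero, and so is $[\nabla^\eul_{d/dz},\nabla^\eul_{d/dz}]$, which handles the case $X = Y = \tfrac{d}{dz}$ trivially.

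The main case is to show $[\nabla^\eul_X, \nabla^\eul_{d/dz}] = 0$ on cohomology for $X \in \der_\ke(R)$, noting that $[X, \tfrac{d}{dz}] = 0$ in $\der_\ke(R[[z]])$ since $X$ annihilates $z$. Writing $\nabla^\eul_{d/dz} = \tfrac{1}{2z}\deg - \tfrac{1}{z}\nabla^\ggm_E$, I would compute the two pieces of the commutator separately. Four observations do all the work: $\nabla^\ggm_X$ is a homogeneous operator on $\wt{M}((z))$ of degree $|X|$, so $[\deg, \nabla^\ggm_X] = |X|\,\nabla^\ggm_X$; because $X(z) = 0$, the operator $\nabla^\ggm_X$ commutes with multiplication by $1/z$, and hence $[\nabla^\ggm_X, \tfrac{1}{2z}\deg] = -\tfrac{|X|}{2z}\nabla^\ggm_X$; a direct computation from $E(r) = \tfrac{|r|}{2}r$ gives the bracket $[X, E] = -\tfrac{|X|}{2}\,X$ in $\der_\ke(R)$; and the flatness hypothesis yields $[\nabla^\ggm_X, \nabla^\ggm_E] = \nabla^\ggm_{[X,E]} = -\tfrac{|X|}{2}\nabla^\ggm_X$ on cohomology. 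Combining these, one finds on cohomology
$$[\nabla^\ggm_X,\; \tfrac{1}{2z}\deg - \tfrac{1}{z}\nabla^\ggm_E] \;=\; -\tfrac{|X|}{2z}\nabla^\ggm_X + \tfrac{|X|}{2z}\nabla^\ggm_X \;=\; 0,$$
as required.

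The argument is thus very short once these graded commutator identities are assembled. The main thing to watch is bookkeeping: tracking the Koszul signs (which simplify here because $\deg$ has degree zero and $\tfrac{d}{dz}$ has even degree) and verifying that the ``degree-shift'' contribution from $[\deg, \nabla^\ggm_X]$ cancels precisely against the ``Euler contribution'' coming from $[X, E]$. No further input on the normed CH module is needed beyond the flatness of $\nabla^\ggm$.
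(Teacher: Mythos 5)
Your proof is correct and follows essentially the same route as the paper: the key identities $[\deg,\nabla^\ggm_X]=|X|\,\nabla^\ggm_X$ and $[E,X]=\tfrac{1}{2}|X|X$, combined with the flatness hypothesis to control $[\nabla^\ggm_E,\nabla^\ggm_X]$, are exactly what the paper's computation of $[\nabla^\eul_{d/dz},\nabla^\eul_X]$ uses. The only (harmless) divergence is in the $(\tfrac{d}{dz},\tfrac{d}{dz})$ case, which you dispose of by the parity observation that the graded self-commutator of an even operator vanishes, whereas the paper computes it explicitly and lands on $\tfrac{1}{z^2}R^{\nabla^\ggm}(E,E)$ --- which is likewise zero.
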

\begin{proof}
By the assumption, we have 
\begin{align*} [\nabla^\eul_\dz, \nabla^\eul_X]&=
[\frac{1}{2z}\deg, \nabla^\ggm_X]-[\frac{1}{z} \nabla^\ggm_E, \nabla^\ggm_X] \\
&=\frac{1}{2z}|X|\nabla^\ggm_X-\frac{1}{z}\nabla^\ggm_{[E, X]}-\frac{1}{z}R^{\nabla^\ggm}(E, X) \\
&=-\frac{1}{z}R^{\nabla^\ggm}(E, X),\end{align*}
where we use $[\deg, \nabla^\ggm_X]=|X|\nabla^\ggm_X$ and $[E, X]=\frac{1}{2}|X|X.$
Hence we have 
\begin{align*}[\nabla^\eul_\dz, \nabla^\eul_\dz]&=
 [\nabla^\eul_\dz, \frac{1}{z}\left(\frac{1}{2}\deg-\nabla^\eul_E\right)]\\
 &=
 -\frac{1}{z}\nabla^\eul_\dz
 +\frac{1}{z}[\nabla^\eul_\dz, \frac{1}{2}\deg-\nabla^\eul_E] \\
 &=-\frac{1}{z}\nabla^\eul_\dz+\frac{1}{z}\nabla^\eul_\dz+
 \frac{1}{z^2}R^{\nabla^\ggm}(E, E)\\
 &=\frac{1}{z^2}R^{\nabla^\ggm}(E, E).\end{align*}
This proposition follows from these equations.       
\end{proof}
\begin{remark}
By a direct calculation, we see that
\begin{equation}\label{eq:curv}
\aligned
R^{\nabla^\ggm}(X, Y)&=R^{\wt{\nabla}}(X,Y)\\
&-\frac{1}{z}\left((-1)^{|X|+|Y|} \io^\gamma_{R^\nabla(X, Y)\gamma}
  +(-1)^{|Y|}\rho^\gamma_{\nabla_X\gamma,  \nabla_Y\gamma}-(-1)^{|X|+|X||Y|}
  \rho^\gamma_{\nabla_Y\gamma, \nabla_X\gamma}\right)\\
&+\frac{1}{z^2}(-1)^{|X|+|Y|}[\io^\gamma_{\nabla_X\gamma}, \io^\gamma_{\nabla_Y \gamma}].
\endaligned
\end{equation}
%
%\begin{align*}R^{\nabla^\ggm}(X, Y)&=R^{\wt{\nabla}}(X,Y)\\
%&-\frac{1}{z}\left((-1)^{|X|+|Y|} \io^\gamma_{R^\nabla(X, Y)\gamma}
%  +(-1)^{|Y|}\rho^\gamma_{\nabla_X\gamma,  \nabla_Y\gamma}-(-1)^{|X|+|X||Y|}
%  \rho^\gamma_{\nabla_Y\gamma, \nabla_X\gamma}\right)\\
%&+\frac{1}{z^2}(-1)^{|X|+|Y|}[\io^\gamma_{\nabla_X\gamma}, \io^\gamma_{\nabla_Y \gamma}].
%\end{align*}
%
On the other hand, if the CH structure extends to an $\LI$ module structure$\mod \epsilon^3$, 
then the $\LI$ module relations mod $\epsilon^3$ implies
%we see the following: 
\begin{align*}
 &\ell^{\wt{M}, \gamma}_0\ell^{\wt{M}, \gamma}_2(\epsilon y_1, \epsilon y_2|m)
 +(-1)^{|y_1|+|y_2|}\ell^{\wt{M}, \gamma}_2(\epsilon y_1, \epsilon y_2|\ell^{\wt{M}, \gamma}_0m)\\
 +&(-1)^{|y_1|} \ell^{\wt{M}, \gamma}_1(\epsilon y_1|\ell^{\wt{M}, \gamma}_1(\epsilon y_2|m))
 +(-1)^{|y_1||y_2|+|y_2|} \ell^{\wt{M}, \gamma}_1(\epsilon y_2|\ell^{\wt{M}, \gamma}_1(\epsilon y_1|m))\\
 -&\ell^{\wt{M}, \gamma}_2(\epsilon \ell^\gamma_1 y_1, \epsilon y_2|m)
 -(-1)^{|y_1||y_2|}\ell^{\wt{M}, \gamma}_2(\epsilon \ell^\gamma_1 y_2, \epsilon y_1|m) \\
 +&\ell^{\wt{M}, \gamma}_2(zy_1, \epsilon y_2|m)
 +(-1)^{|y_1||y_2|}\ell^{\wt{M}, \gamma}_2(zy_2, \epsilon y_1|m)
 \\
 =&0,
\end{align*}
where $y_1, y_2 \in L[[z]].$ 
Set 
$\LL^\gamma _{y_1, y_2}:=
(-1)^{|y_1|+|y_2|}\ell^{\wt{M}, \gamma}_2(\epsilon y_1, \epsilon y_2|\, \cdot\,) \in \End(\wt{M}).$
Then the above formula is written as follows:
\begin{align*}
 [d, \LL^\gamma_{y_1, y_2}]+\LL^\gamma_{\delta y_1, y_2}+(-1)^{|y_1|}\LL^\gamma_{y_1, \delta y_2}
 =(-1)^{|y_1|}[\io^\gamma_{y_1}, \io^\gamma_{y_2}]+
 z\big{(}\rho^\gamma_{y_1, y_2}+(-1)^{|y_1||y_2|}\rho^\gamma_{y_2, y_1}\big{)}.
\end{align*}
Combing \eqref{eq:curv} with this formula, we find
%From this formula, it follows 
that $\nabla^\ggm$ is flat on cohomology if $\nabla$  and $\wt{\nabla}$ are flat and the CH structure extends to an $\LI$ module structure$\mod \epsilon^3.$  
\end{remark}
%%%%%%%%%%%%%%%%%%%%%%%%%%%%%%  EXAMPLE  HYPERCOMMU %%%%%%%%%%%%%%%%%%%%%%%%%%%%%%
\begin{remark}\label{5.19}
In Example \ref{5.20} below, we will use the following variant 
of the GGM connection.
%Getzler-Gauss-Manin connection.
Let $L$ be an $\LI$ algebra $($not normed$)$ over a graded ring $R$ 
and $\wt{M}$ a CH module over $L$, 
and let $\nabla$ be a connection
on $L$ (not necessary an $\LI$ connection)
and  
$\wt{\nabla} :  \der_\ke(R) \to \End_{\ke[[z]]}(\wt{M})$
a connection on $\wt{M}$ regarded as a graded $R$ module 
(not necessary a CH-connection).
Suppose that we have a degree one morphism
\[c_1 : \der_\ke(R) \to L, \]
for which $\nabla$ satisfies
\begin{equation}\label{eq:nabla}
(\nabla_X\ell_k)(y_1, \dots, y_k)=(-1)^{|X|}\ell_{k+1}(c_1(X), y_1, \dots, y_k)
\end{equation}
and $\wt{\nabla}$ satisfies 
\begin{equation}\label{eq:wtnabla}
(\wt{\nabla}_X\ell^{\wt{M}}_k)(y_1+\epsilon y'_1, \dots, y_k+\epsilon y'_k|m)=(-1)^{|X|}\ell^{\wt{M}}_{k+1}(c_1(X), y_1+\epsilon y'_1, \dots, y_k+\epsilon y'_k|m).
\end{equation}
Then the connections $\nabla$ and $\wt{\nabla}$ satisfy the properties in 
Lemma \ref{5.6} and Lemma \ref{5.7}. 
Thus in Definition \ref{defggm}, replacing the CH connection by the connection $\wt{\nabla}$ above and
$\nabla_X \gamma$ by $c_1(X)$, we can define a (variant of) GGM connection 
on cohomology. Similarly, we can define a (variant of) Euler connection as well in this setting.
%(cf. Lemma \ref{5.6} and Lemma \ref{5.7}).
%In this setting, replacing $\nabla_X \gamma$ in Definition \ref{defggm} by $c_1(X)$, 
%we define a Getler-Gauss-Manin connection.
%Similarly, we can define an Euler connection.
\end{remark}
\begin{example}\label{5.20}
({\bf Continued from Example \ref{5.10}}.)
Suppose that there exists an element \[c_1 \in A^2 \subset A^{e, 2}\] 
with
\begin{equation}\label{eq:divaxm}
(c_1, x_1, \dots, x_k)=e\frac{d}{de}(x_1, \dots, x_k).
\end{equation}
We define a degree one $R$ module morphism \[c_1 :\der_\ke(R) \to A^e[1]\]
by $c_1(e\frac{d}{de}):=c_1$ and $\ c_1(\frac{\partial}{\partial t_i}):=(-1)^{|T_i|}T_i.$
Then the trivial connections $\nabla$ and $\wt{\nabla}$ satisfy the condition of Remark \ref{5.19}.
Set 
$$
E:=c_1+\sum_{i=0}^m (-1)^{|T_i|}(\frac{2-|T_i|}{2})T_i t_i \in A^e[[t]].
$$
Then we easily see that 
\[\nabla^\eul_{\frac{\partial}{\partial t_i}}=\frac{\partial}{\partial t_i}+\frac{1}{z}T_i *_{\bf t}, \  
\nabla^\eul_{\dz}=\dz+\frac{\deg}{2z}-\frac{E*_{\bf t}}{z^2},\]
where ${\bf t} =\sum_{i=0}^m T_it_i$ and $\deg$ is the degree operator of $A$.
\end{example}
%%%%%%%%%%%%%%%%%%%%%% MORPHISM INTERTWINE CONNECTION %%%%%%%%%%%%%%%%%%%%%%%%%%%
Now let $\wt{N}$ be another normed CH module equipped with a CH connection and $\{f_k\}_{0 \le k}$ be a morphism of normed CH modules from $\wt{M}$ to $\wt{N}$ preserving 
the CH connections on $\wt{M}$ and $\wt{N}$ in the sense of Definition \ref{def:chhompresconn}. 
Then recalling \eqref{dgamma}, \eqref{fgamma}, \eqref{chmor5.4},
%and notions $F_y,F_y^{\epsilon}$ in \eqref{chmor3.7}, 
we have 
\begin{proposition}\label{prop5.21}
$$
\nabla^\ggm_X \circ f^\gamma_0-f^\gamma_0 \circ \nabla^\ggm_X
 = 
     (-1)^{|X|+1} d^\gamma \circ 
     F^{\epsilon,\gamma}_{\nabla^\ggm_X\gamma}
     -F^{\epsilon,\gamma}_{\nabla^\ggm_X \gamma}  \circ d^\gamma.
$$
\end{proposition}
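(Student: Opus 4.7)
The plan is to compute $[z\nabla^\ggm_X, f_0^\gamma]$ at the chain level by expanding via Definition~\ref{defggm},
\[
z\nabla^\ggm_X = z\wt{\nabla}_X - (-1)^{|X|}\io^\gamma_{\nabla_X\gamma},
\]
evaluating the two resulting brackets separately, and observing that the non-$d^\gamma$-exact obstructions cancel by virtue of the Maurer--Cartan equation (via Lemma~\ref{5.6}).

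For the first bracket, I use that the preservation hypothesis $\wt{\nabla}_X\check f = \check f\wt{\nabla}_X$ descends to $\wt{\nabla}_X\check f^\gamma = \check f^\gamma\wt{\nabla}_X$ after twisting by $\gamma$. Projecting this identity onto the $\wt{N}[1]$-factor applied to the input $(|m)$ and expanding $f_0^\gamma(|m) = \sum_k f_k(\gamma,\dots,\gamma|m)/k!$, I use the symmetry of $f_k$ in its $\wt{L}$-arguments together with $\wt{\nabla}_X\gamma = \nabla_X\gamma$ (since $\gamma\in L$ has no $\epsilon$-part in $\wt L$) to combine the $k$ copies of $\nabla_X\gamma$ into a single term, obtaining
\[
[\wt{\nabla}_X, f_0^\gamma] = f_1^\gamma(\nabla_X\gamma \,|\, \cdot\,) = (-1)^{|X|+1} F^\gamma_{\nabla_X\gamma}.
\]

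For the second bracket, I use that $\{f_k^\gamma\}_{0\le k}$ is a CH morphism from $\wt{M}^\gamma$ to $\wt{N}^\gamma$ (already established), hence satisfies the $\gamma$-twisted analogue of~\eqref{chmo}. Specializing that twisted relation to $y = \nabla_X\gamma \in L$ and invoking Lemma~\ref{5.6} (which gives $\ell_1^\gamma(\nabla_X\gamma)=0$, hence $\delta^\gamma(\nabla_X\gamma)=0$, killing the $F^{\epsilon,\gamma}_{\delta^\gamma y}$-term) yields
\[
[\io^\gamma_{\nabla_X\gamma}, f_0^\gamma] = d^\gamma F^{\epsilon,\gamma}_{\nabla_X\gamma} - (-1)^{|X|+1} F^{\epsilon,\gamma}_{\nabla_X\gamma} d^\gamma - z F^\gamma_{\nabla_X\gamma}.
\]

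Plugging both into $[z\nabla^\ggm_X, f_0^\gamma] = z[\wt{\nabla}_X, f_0^\gamma] - (-1)^{|X|}[\io^\gamma_{\nabla_X\gamma}, f_0^\gamma]$, the two $zF^\gamma_{\nabla_X\gamma}$ contributions acquire signs $(-1)^{|X|+1}$ and $(-1)^{|X|}$ respectively; these are opposite and cancel, leaving the chain-level identity
\[
[z\nabla^\ggm_X, f_0^\gamma] = (-1)^{|X|+1} d^\gamma F^{\epsilon,\gamma}_{\nabla_X\gamma} - F^{\epsilon,\gamma}_{\nabla_X\gamma} d^\gamma,
\]
from which the proposition follows upon dividing by $z$ in the meromorphic sense. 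The main obstacle is careful sign tracking, especially in deriving the $\gamma$-twisted version of~\eqref{chmo}; the structural content, however, is clean: the ``$zF^\gamma_{\nabla_X\gamma}$'' obstruction to $f_0^\gamma$ being a strict chain map for $\nabla^\ggm$ comes in equal measure from both summands of $z\nabla^\ggm_X$ with opposite signs, and its cancellation is precisely what the Maurer--Cartan equation (Lemma~\ref{5.6}) ensures.
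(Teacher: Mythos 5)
Your proof is correct and takes essentially the same route as the paper's: the paper likewise combines (i) the identity $\wt{\nabla}_X\circ f_0^\gamma=f_0^\gamma\circ\wt{\nabla}_X-(-1)^{|X|}F^\gamma_{\nabla_X\gamma}$ obtained from preservation of the CH connections with (ii) the $\gamma$-twisted form of \eqref{chmo} at $y=\nabla_X\gamma$, where Lemma \ref{5.6} kills the $F^{\epsilon,\gamma}_{\delta^\gamma y}$ term, and then cancels the two $zF^\gamma_{\nabla_X\gamma}$ contributions exactly as you do. One sentence should be repaired: the preservation hypothesis does \emph{not} descend to $\wt{\nabla}_X\circ\check f^\gamma=\check f^\gamma\circ\wt{\nabla}_X$ --- that identity would force $[\wt{\nabla}_X,f_0^\gamma]=0$, contradicting your very next display --- rather, conjugation by $\e^{\gamma}$ breaks the commutation precisely by the term where $\wt{\nabla}_X$ hits the copies of $\gamma$, which is the correction $f_1^\gamma(\nabla_X\gamma|\,\cdot\,)=(-1)^{|X|+1}F^\gamma_{\nabla_X\gamma}$ you then correctly compute.
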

\begin{proof}
Since the morphism preserves the CH connections, we see that 
$$
\wt{\nabla}_X \circ f_0^\gamma 
= f_0^\gamma \circ \wt{\nabla}_X-(-1)^{|X|}
F^\gamma_{\nabla_X \gamma}.
$$
By Equation \ref{chmo} and Lemma \ref{5.6}, we have 
\[
zF^\gamma_{\nabla_X\gamma}+\io^\gamma_{\nabla_X\gamma} \circ f^\gamma_0=
   f^\gamma_0 \circ \io^\gamma_{\nabla_X\gamma}+d^\gamma \circ 
F^{\epsilon,\gamma}_{\nabla_X \gamma}
   -(-1)^{|X|+1} F^{\epsilon,\gamma}_{\nabla_X \gamma} \circ d^\gamma.\]
The proposition follows from these equations.
\end{proof}
Since $f^\gamma_0$ is degree zero, we obtain $\deg \circ f^\gamma_0=f^\gamma_0 \circ \deg.$   
Hence we have the following:
\begin{corollary}\label{cor5.22}
$$
\nabla^\eul_{\frac{d}{dz}}\circ f^\gamma_0-f^\gamma_0 \circ \nabla^\eul_{\frac{d}{dz}}=
   \frac{1}{z}\left(d^\gamma \circ 
   F^{\epsilon,\gamma}_{\nabla^\eul_E\gamma}
     -F^{\epsilon,\gamma}_{\nabla^\eul_E \gamma}  \circ d^\gamma\right).
$$ 
\end{corollary}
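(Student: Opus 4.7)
The plan is to specialize Proposition \ref{prop5.21} to $X=E$ and combine it with the defining formula for $\nabla^\eul_\dz$, so that the corollary becomes an essentially formal consequence.

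First, I rewrite Definition \ref{def:Eu} in the form
\begin{equation*}
\nabla^\eul_\dz = \frac{1}{2z}\deg - \frac{1}{z}\nabla^\ggm_E
\end{equation*}
by dividing the second defining equation by $z^2$, and then compute the commutator of each summand with $f^\gamma_0$. This produces
\begin{equation*}
\nabla^\eul_\dz \circ f^\gamma_0 - f^\gamma_0 \circ \nabla^\eul_\dz
= \frac{1}{2z}\bigl(\deg \circ f^\gamma_0 - f^\gamma_0 \circ \deg\bigr) - \frac{1}{z}\bigl(\nabla^\ggm_E \circ f^\gamma_0 - f^\gamma_0 \circ \nabla^\ggm_E\bigr).
\end{equation*}

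Second, I invoke the observation made in the sentence immediately preceding the corollary: because $f^\gamma_0$ is a degree zero morphism, one has $\deg \circ f^\gamma_0 = f^\gamma_0 \circ \deg$, so the first bracket vanishes identically. Only the $\nabla^\ggm_E$ term contributes.

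Third, I apply Proposition \ref{prop5.21} to the Euler vector field $X=E$ to evaluate the remaining bracket. Here $|E|=0$, so the sign $(-1)^{|X|+1}$ reduces to $-1$; moreover $\nabla^\eul_E = \nabla^\ggm_E$ because $E \in \der_\ke(R) \subset \der_\ke(R[[z]])$ lies in the domain of the first defining equation of Definition \ref{def:Eu}, hence $\nabla^\eul_E \gamma$ coincides with the argument of $F^{\epsilon,\gamma}$ appearing in the proposition. Substituting the resulting expression and multiplying by $-1/z$ yields the stated identity.

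The main (and rather mild) obstacle is just keeping track of the signs coming from $(-1)^{|X|+1}$ at $X=E$ and from the outer factor $-1/z$; there is no new homological content beyond Proposition \ref{prop5.21}, and no auxiliary computation at chain level is required because the degree-zero nature of $f^\gamma_0$ kills the $\deg$ contribution at the very first step.
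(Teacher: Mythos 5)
Your proposal is correct and follows essentially the same route as the paper: the paper's own proof consists precisely of dividing the second defining equation of Definition \ref{def:Eu} by $z^2$, noting that $\deg \circ f^\gamma_0 = f^\gamma_0 \circ \deg$ because $f^\gamma_0$ has degree zero, and then invoking Proposition \ref{prop5.21} at $X=E$ (where $|E|=0$ and $\nabla^\eul_E=\nabla^\ggm_E$). Your write-up simply makes explicit the sign bookkeeping that the paper leaves implicit.
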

Proposition \ref{prop5.21} and Corollary \ref{cor5.22} imply that $f^\gamma_0$ intertwines the Euler connections on cohomology. 
\begin{remark}\label{rem:varComp}
({\bf Continued from Remark \ref{5.19}}.)
We consider the situation of 
the variants of the GGM connection and the Euler connection in  
Remark \ref{5.19}. Instead of assuming that 
a morphism $\{f_k\}_{0 \le k}$ of normed CH modules from $\wt{M}$ to $\wt{N}$ preserves 
the CH connections, we assume that $\{f_k\}_{0 \le k}$ satisfies 
$$
(\wt{\nabla}_X \circ f_k -f_k \circ \wt{\nabla}_X)
(y_1+\epsilon y'_1, \dots, y_k+\epsilon y'_k|m) = 
f_{k+1}(c_1(X), y_1+\epsilon y'_1, \dots, y_k+\epsilon y'_k|m) 
$$
for any $X \in \der_\ke(R)$.
Here $\wt{\nabla}$ in the first term is the connection on $\wt{N[1]}$ as in \eqref{eq:wtnabla} 
and one in the second term is the connection on 
$\wt{L}[1]^{\odot k}\otimes \wt{M}[1]$ induced by the connections $\nabla, \wt{\nabla}$ on $L, \wt{M}$ 
in \eqref{eq:nabla}, \eqref{eq:wtnabla} respectively.
Then Proposition \ref{prop5.21} for the variant of the GGM connection $\nabla^\ggm$
holds 
and Corollary \ref{cor5.22} for the variant of the Euler connection $\nabla^\eul$ also holds.
\end{remark}
%%%%%%%%%%%%%%%%%%%%%%%%%%%%%%%%%%%%%%%%%%%%%%%%%%%%%%%%%%%%%%%%%%%%%%%%%%%%%%%%%%%%

\section{CH structures on Hochschild invariants}\label{sec:CH}
\subsection{CH structures on Hochschild invariants}
In \S 6.1., we give an explicit formula of a CH structure on Hochschild invariants of an $A_\infty$ category (cf. \cite{dalope}, \cite{tsygau}).
We mainly follow \cite{getcar} for the definitions of operators on Hochschild invariants.
%%%%%%%%%%%%%%%%%%%%%%%%%%%%%%%  DEF OF HOCHSCHILD %%%%%%%%%%%%%%%%%%%%%%%%%%%%%%%%%
Let $R$ be a graded ring (not normed).
Let $\A$ be a set of objects $\ob\A$ with a set of morphisms  $\Hom_\A(X, Y)$ for each $X,Y \in \ob\A,$
where $\Hom_\A(X, Y)$ is a graded $R$ module.
We assume $\Hom_\A(X, X)$ is equipped with a degree preserving $R$ module morphism 
$\eta : R \to \Hom_\A(X, X)$ for each $X \in \ob\A.$
This morphism $\eta$ is called a unit and $\eta(1)$ is denoted by $\mathbbm{1}.$
Set \[\A(X_0, \dots, X_k):=\Hom_\A(X_0, X_1)[1] \otimes \cdots \otimes \Hom_\A(X_{k-1}, X_k)[1].\]  
The Hochschild cochain complex and the Hochschild chain complex are defined by 
\begin{align*}
 \hoc^l&:=\!\!\!\!\prod_{X_0, \dots, X_k \in \ob\A}\!\!\!\! 
 \Hom_R^l\big(\A(X_0, \dots, X_k), \Hom_\A(X_0, X_k)\big), 
 &\hoc:=\bigoplus_{l \in \Z} \hoc^l, \\
 \hoh^l&:=\bigoplus_{X_0, \dots X_k \in \ob\A} 
 \big(\Hom_\A(X_0, X_1) \otimes \A(X_1, \dots, X_k, X_0)\big)^l, 
 &\hoh:= \bigoplus_{l \in \Z} \hoh^l.
\end{align*}
%%%%%%%%%%%%%%%%%%%%%%%%%%%%%%%  DGLA, MODULE STRUCTURE       %%%%%%%%%%%%%%%%%%%%%%%
For $\varphi, \psi \in \hoc[1],$ we define a composition $\varphi \circ \psi$ by 
\begin{align*}
 \varphi \circ \psi(x_1, \dots, x_k):=
 \sum_ {0 \le i \le j \le k} (-1)^\# \varphi(x_1, \dots, x_i, \psi(x_{i+1}, \dots, x_j), x_{j+1}, \dots, x_k),
\end{align*}
where $x_i \in \Hom_A(X_{i-1}, X_i)[1]$ and the sign $\#$ is $|\psi|'(|x_1|'+\cdots+|x_i|').$
Set \[[\varphi, \psi]:=\varphi \circ \psi- (-1)^{|\varphi|'|\psi|'}\psi \circ \varphi.\] 
For $\X=x_0 \otimes \cdots \otimes x_k \in \A(X_0, \dots, X_k, X_0) \subset \hoh[1],$
we define $\LL_{\varphi}(\X)$ by
\begin{align*}
\LL_\varphi(\X):=&\sum_{0 \le i \le j \le k} (-1)^{\#_1}x_0 \otimes \cdots \otimes x_i \otimes \varphi(x_{i+1}, \dots, x_j) \otimes
  x_{j+1} \cdots \otimes x_k \\
 +&\sum_{0 \le i \le j \le k} (-1)^{\#_2}\varphi(x_{j+1}, \dots, x_k, x_0, \dots, x_i) \otimes x_{i+1} \otimes \cdots \otimes x_j,
\end{align*}
where 
$$
\aligned
\#_1 & :=|\varphi|'(|x_0|'+\cdots+|x_i|'), \\
\#_2 & :=(|x_0|'+ \cdots + |x_j|')(|x_{j+1}|'+ \cdots +|x_k|').
\endaligned
$$
\begin{proposition}[see, e.g., \cite{getcar}, {\cite[Proposition 3.1.]{iwaper}}]\label{ch0}
The shifted Hochschild cochain complex $(\hoc[1], [\cdot,\cdot])$ is a graded Lie algebra and $(\hoh[1], \LL)$ is a graded Lie module over it.
\end{proposition}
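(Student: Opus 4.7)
My plan is to reduce both claims to a single combinatorial identity for the Gerstenhaber circle product $\circ$ on $\hoc[1]$, namely that $\circ$ is a graded right pre-Lie (right-symmetric) product, together with a compatible ``action'' identity expressing how $\circ$-insertion and the cyclic operator $\LL$ interact. With those two identities in hand, the Lie algebra axioms and the Lie module axiom drop out by purely formal manipulations.

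First, for part (a), graded antisymmetry of $[\cdot,\cdot]$ is immediate from the definition, so the content is the graded Jacobi identity. The plan is to verify that the associator
\[
a(\varphi,\psi,\chi):=(\varphi\circ\psi)\circ\chi-\varphi\circ(\psi\circ\chi)
\]
is graded symmetric in its last two arguments:
\[
a(\varphi,\psi,\chi)=(-1)^{|\psi|'|\chi|'}\,a(\varphi,\chi,\psi).
\]
This is checked by expanding $(\varphi\circ\psi)\circ\chi(x_1,\dots,x_k)$ and classifying the resulting terms according to the relative positions of the $\psi$- and $\chi$-insertions in $x_1,\dots,x_k$: those in which $\chi$ lies strictly inside the $\psi$-block cancel with $\varphi\circ(\psi\circ\chi)$, while the remaining terms (with non-overlapping $\psi$ and $\chi$ blocks) are graded symmetric under swapping $\psi$ and $\chi$. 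Standard pre-Lie yoga then gives the Jacobi identity for $[\cdot,\cdot]$.

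For part (b), the strategy is to introduce the auxiliary insertion operator
\[
\varphi\bullet\X:=\sum_{0\le i\le j\le k}(-1)^{\#_1}\,x_0\otimes\cdots\otimes\varphi(x_{i+1},\dots,x_j)\otimes\cdots\otimes x_k
\]
(the first sum defining $\LL_\varphi$), so that $\LL_\varphi$ decomposes as $\varphi\bullet + $ (the ``cyclically wrapped'' insertion). By the same kind of case analysis as in the pre-Lie identity, one verifies
\[
\varphi\bullet(\psi\bullet\X)-(-1)^{|\varphi|'|\psi|'}\psi\bullet(\varphi\bullet\X)=(\varphi\circ\psi)\bullet\X-(-1)^{|\varphi|'|\psi|'}(\psi\circ\varphi)\bullet\X,
\]
together with the analogous identity that also absorbs the wrap-around terms, yielding
\[
[\LL_\varphi,\LL_\psi]=\LL_{[\varphi,\psi]}.
\]
The non-overlapping terms on both sides match in pairs under the $(\varphi,\psi)$ swap, and the overlapping terms are precisely what reconstructs $\LL_{\varphi\circ\psi}$, including the contributions from the wrap-around summand that uses cyclicity of the chain $\X=x_0\otimes\cdots\otimes x_k$.

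The main obstacle is purely bookkeeping: keeping track of the Koszul signs $\#$, $\#_1$, $\#_2$ through the case-by-case comparison. In particular, the swap from $|x_0|'+\cdots+|x_i|'$ in $\#_1$ to $(|x_0|'+\cdots+|x_j|')(|x_{j+1}|'+\cdots+|x_k|')$ in $\#_2$, after a cyclic rotation, requires careful use of the shifted-degree identities; a clean way to handle this is to work throughout with the bar-comodule description and check the identities at the level of coderivations on $T(\A[1])$ and its cyclic quotient, where signs are determined by the Koszul rule applied to coderivations rather than reproduced by hand.
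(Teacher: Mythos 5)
Your proposal is correct, and it is the standard argument: the graded right pre-Lie (right-symmetric associator) identity for the Gerstenhaber circle product gives the Jacobi identity on $\hoc[1]$, and the block/wrap-around case analysis gives $[\LL_\varphi,\LL_\psi]=\LL_{[\varphi,\psi]}$. The paper does not prove Proposition \ref{ch0} itself (it is quoted from Getzler and Iwanari), but your method is exactly the direct-calculation style the paper uses for the neighbouring Propositions \ref{ch1} and \ref{ch2}, where the same bookkeeping is organized via the brace/operadic notation $\varphi\{\psi\}$, $\{\vin,\varphi\}+\{\varphi\{\vin\}\}$; your suggestion to verify the identities at the level of coderivations on the bar construction is an equivalent way of packaging the Koszul signs.
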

%%%%%%%%%%%%%%%%%%%%%%%%%%%%%% CONSTRUCTION OF CH %%%%%%%%%%%%%%%%%%%%%%%%%%%%%%%%%
Following \cite[\S 2]{getcar}, we define a morphism $\rho$ by
\begin{align*}
\rho_{\varphi, \psi}:=\!\sum_{0 \le i \le j \le s \le t \le k}\!(-1)^\#
  \varphi(x_{j+1}, \dots, x_s, \psi(x_{s+1}, \dots, x_t),x_{t+1}, \dots, x_k, x_0, \dots, x_i) 
   \otimes x_{i+1} \otimes \cdots \otimes x_j,
\end{align*}
where $\#=|\psi|'(|x_{j+1}|'+ \cdots + |x_s|')+(|x_0|'+ \cdots + |x_j|')(|x_{j+1}|'+ \cdots + |x_k|').$
The next proposition will be proved in \S \ref{proof}.
\begin{proposition}\label{ch1}
\begin{align*}
\rho_{[\varphi_1, \varphi_2], \psi}-\rho_{\varphi_1,[\varphi_2, \psi]}+
 (-1)^{|\varphi_1|'|\varphi_2|'}\rho_{\varphi_2, [\varphi_1, \psi]}=
 [\LL_{\varphi_1}, \rho_{\varphi_2, \psi}]-(-1)^{|\varphi_1|'|\varphi_2|'}
 [\LL_{\varphi_2}, \rho_{\varphi_1, \psi}].
\end{align*}
\end{proposition}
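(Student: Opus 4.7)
The strategy is a direct combinatorial verification by expanding both sides of the identity in terms of the underlying definitions of $[\cdot,\cdot]$, $\LL$, and $\rho$. Each term on either side will be rewritten as a sum, indexed by cutting points on the cyclic tensor $\X=x_0\otimes\cdots\otimes x_k$, of configurations involving three nested insertions coming from $\varphi_1,\varphi_2,\psi$. I will classify these configurations by the relative nesting pattern of the three cochains and match them side to side.

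The left hand side expands as follows. By the definition of the Gerstenhaber bracket, $\rho_{[\varphi_1,\varphi_2],\psi}=\rho_{\varphi_1\circ\varphi_2,\psi}-(-1)^{|\varphi_1|'|\varphi_2|'}\rho_{\varphi_2\circ\varphi_1,\psi}$, and each summand further unfolds, via the definition of $\rho$ and of $\circ$, into configurations where the wrap-around insertion at the cyclic point holds a nested cochain pair (either $\varphi_1$ enclosing $\varphi_2$ or $\varphi_2$ enclosing $\varphi_1$), with $\psi$ plugged into the innermost one. The term $\rho_{\varphi_1,[\varphi_2,\psi]}$ (and its antisymmetric partner) produces configurations with $\varphi_1$ at the wrap-around position, $\varphi_2$ nested inside it, and $\psi$ nested inside $\varphi_2$. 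Altogether the left hand side accounts for all linearly-nested triples.

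For the right hand side, I expand $\LL_{\varphi_1}\rho_{\varphi_2,\psi}$ by inserting $\varphi_1$ into the tensor produced by $\rho_{\varphi_2,\psi}$: the insertion can land in one of four regions, namely (i) disjoint from both the $\varphi_2$-block and $\psi$, (ii) inside the $\varphi_2$-argument but outside $\psi$, (iii) inside $\psi$, or (iv) itself wrapping around the cyclic point so that it encloses the whole $\varphi_2$-block. The companion $(-1)^{|\varphi_1|'|\psi\varphi_2|'}\rho_{\varphi_2,\psi}\LL_{\varphi_1}$ produces the configurations where $\varphi_1$ is inserted first, and $\rho_{\varphi_2,\psi}$ is then applied. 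The cases in which $\varphi_1$ does not interact with $\varphi_2$ or $\psi$ (region (i)) cancel between the two summands of the commutator. Regions (ii) and (iv) reproduce, after combining with the antisymmetric counterpart, precisely $\rho_{\varphi_1\circ\varphi_2,\psi}-(-1)^{|\varphi_1|'|\varphi_2|'}\rho_{\varphi_2\circ\varphi_1,\psi}$; region (iii) reproduces $\rho_{\varphi_1,[\varphi_2,\psi]}$ after antisymmetrization.

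The main obstacle is sign bookkeeping under the Koszul convention with shifted degrees $|\cdot|'$. The raw list of configurations on both sides is easy to enumerate, but each position contributes a product of partial-degree signs, and the cancellations only become manifest after grouping terms according to the classification above and repeatedly applying $|\varphi\circ\psi|'=|\varphi|'+|\psi|'$. Since no analytic input is needed, once the sign conventions are fixed consistently, verifying the identity reduces to checking that each of the four regions (i)--(iv) on the right correctly matches the corresponding nested pattern on the left, with matching signs; this is the computational core of the proof and is the one place where a clean diagrammatic notation (as in \cite{getcar}) significantly streamlines the bookkeeping.
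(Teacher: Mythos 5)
Your overall strategy coincides with the paper's: Proposition \ref{ch1} is proved there by brute-force expansion of every term into configurations of nested insertions (written in the brace notation of the Kontsevich--Soibelman operad) and term-by-term matching, so there is no methodological divergence. The paper merely organizes the computation differently: it first establishes the half-symmetrized identity
\begin{align*}
 \rho_{\varphi_1 \circ \varphi_2, \psi}
 -\rho_{\varphi_1, [\varphi_2, \psi]}
 +(-1)^{|\varphi_1|'|\varphi_2|'} [\LL_{\varphi_2}, \rho_{\varphi_1, \psi}]
 =\{\varphi_1\{\varphi_2\{\psi, \vin\}\}\}
 +(-1)^{|\varphi_1|'|\varphi_2|'}\{\varphi_2\{\varphi_1\{\psi, \vin\}\}\},
\end{align*}
whose right-hand side is a genuinely nonzero residual, symmetric under $\varphi_1\leftrightarrow\varphi_2$ up to the Koszul sign, and then deduces the proposition by antisymmetrizing, which kills that residual.

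However, the specific configuration matching you announce is not the one that actually occurs, and if you ran the check exactly as described it would fail. Three corrections. First, $\rho_{\varphi_1,[\varphi_2,\psi]}$ contributes \emph{two} distinct nesting types, $\{\varphi_1\{\varphi_2\{\psi\},\vin\}\}$ and $\{\varphi_1\{\psi\{\varphi_2\},\vin\}\}$; your description records only the first. Second, your region (iii) of $[\LL_{\varphi_1},\rho_{\varphi_2,\psi}]$ (insertion of $\varphi_1$ inside $\psi$) produces $\{\varphi_2\{\psi\{\varphi_1\},\vin\}\}$, which matches the $\rho_{\varphi_2,\psi\circ\varphi_1}$ half of $\rho_{\varphi_2,[\varphi_1,\psi]}$, not $\rho_{\varphi_1,[\varphi_2,\psi]}$ as you state. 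Third, and most importantly, the correspondence is not a clean left-to-right bijection: regions (ii) and (iv) of the two commutators do \emph{not} reproduce all of $\rho_{[\varphi_1,\varphi_2],\psi}$, because the configurations $\{\varphi_1\{\varphi_2\{\psi\},\vin\}\}$ and $\{\varphi_2\{\varphi_1\{\psi\},\vin\}\}$ (where the marked input $\vin$ sits outside the inner cochain) never arise from either commutator --- neither $\LL$ nor $\rho$ can place $\psi$ strictly inside a cochain that does not also contain the wrap-around point; these terms instead occur twice on the left-hand side, once in $\rho_{[\varphi_1,\varphi_2],\psi}$ and once in the $\rho_{\varphi_i,[\varphi_j,\psi]}$ terms, and cancel entirely there. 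Conversely, the configurations $\{\varphi_1\{\varphi_2\{\psi,\vin\}\}\}$ and $\{\varphi_2\{\varphi_1\{\psi,\vin\}\}\}$, in which one cochain swallows the other's whole block including $\vin$, survive on both sides and balance only after the two commutators are combined with the correct relative sign --- this is exactly the residual in the paper's lemma. Your plan is salvageable, since it is at bottom the same direct calculation, but the bookkeeping must follow this corrected ledger (or, more efficiently, the paper's device of proving the half-symmetrized lemma and then antisymmetrizing).
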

The reduced Hochschild cochain complex $\rhoc$ is defined by 
\[\rhoc:=\left\{ \varphi \in \hoc \relmiddle| \varphi( \dots, \mathbbm{1}, \dots)=0\right\}\]
and the reduced Hochschild chain complex $\rhoh$ is defined by the relation
\[x_0 \otimes x_1 \otimes \cdots x_{i-1} \otimes \mathbbm{1}
   \otimes x_{i+1} \otimes \cdots \otimes  x_k=0 \ \ (1 \le i \le k).\]
Then $\rhoc[1]$ is a sub graded Lie algebra and 
$\LL$ naturally induces a graded Lie module structure on $\rhoh[1],$ which is also denoted by $\LL.$
We define the following morphisms:
\begin{align*}
B(\X)&:=\sum_{0 \le i \le k}(-1)^{\#_1} \mathbbm{1}\otimes x_{i+1} \otimes \cdots \otimes x_k \otimes x_0 \otimes \cdots \otimes x_i, \\
B^1_\varphi(\X)&:=\sum_{0 \le i \le j \le s \le k}(-1)^{\#_1+\#_2}
  \mathbbm{1}\otimes x_{i+1} \otimes \cdots \otimes x_j \otimes 
  \varphi(x_{j+1}, \dots, x_s) \otimes \cdots \otimes x_k\otimes x_0 \otimes \cdots \otimes x_i,
\end{align*}
where
$\#_1=(|x_0|'+ \cdots + |x_i|')(|x_{i+1}|'+ \cdots + |x_k|')$ and
$\#_2=|\varphi|'(|x_{i+1}|'+ \cdots + |x_j|').$
By the definition, 
we can check the following (e.g.,\cite[Definition 2.1 and Theorem 2.2]{getcar}):
\begin{align}
[B, B]=0, \label{ch6.1} \\ 
[B, \LL_\varphi]=0, \label{ch6.2}\\ 
[B, B^1_\varphi]=0. \label{ch6.3}
\end{align}
The next proposition will be proved in \S \ref{proof}.
\begin{proposition}\label{ch2}
$
[B, \rho_{\varphi, \psi}]+B^1_{[\varphi, \psi]}-(-1)^{|\varphi|'}[\LL_\varphi, B^1_\psi]=0
$
\end{proposition}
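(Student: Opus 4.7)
The plan is to prove Proposition \ref{ch2} by direct expansion of each operator on a generic Hochschild chain $\mathbb{X} = x_0 \otimes x_1 \otimes \cdots \otimes x_k$, followed by a term-by-term matching argument. Every term produced by the four operators in the identity is a tensor of the shape $\mathbbm{1} \otimes (\text{some entries}) \otimes \Phi \otimes (\text{some entries})$ where $\Phi$ is either a nested factor $\varphi(\ldots,\psi(\ldots),\ldots)$ or two separate factors $\varphi(\ldots)$ and $\psi(\ldots)$. I would classify all terms on the left-hand side according to this shape and according to the index ranges (and cyclic shifts) involved.

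First I would enumerate the terms in $[\LL_\varphi, B^1_\psi](\mathbb{X})$. In $\LL_\varphi \circ B^1_\psi$, the operator $B^1_\psi$ first inserts $\mathbbm{1}$ at position $0$, cyclically permutes the remaining entries, and lets $\psi$ act on a consecutive range; then $\LL_\varphi$ acts either on a range lying entirely inside the $B^1_\psi$-output (no wrap-around) or on a range wrapping around the inserted $\mathbbm{1}$. The non-wrap-around terms split into three sub-cases: the $\varphi$-range is disjoint from the $\psi$-range, the $\varphi$-range absorbs the $\psi(\ldots)$ slot giving a $\varphi \circ \psi$ composition, or the $\varphi$-range lies inside arguments of $\psi$. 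The analogous enumeration for $B^1_\psi \circ \LL_\varphi$ produces a parallel list; the disjoint terms cancel in the commutator, while the composition terms combine to yield exactly $B^1_{\varphi \circ \psi} - (-1)^{|\varphi|'|\psi|'} B^1_{\psi \circ \varphi} = B^1_{[\varphi,\psi]}$, matching the middle summand after multiplying by $-(-1)^{|\varphi|'}$.

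The remaining wrap-around terms of $[\LL_\varphi, B^1_\psi]$ must match $[B, \rho_{\varphi,\psi}]$. When $\LL_\varphi$ acts wrapping around position $0$ of the $B^1_\psi$-output and its range contains the $\psi(\ldots)$ slot, the resulting tensor has a nested factor $\varphi(x_{j+1}, \ldots, \psi(\ldots), \ldots, x_k, x_0, \ldots, x_i)$ preceded by $\mathbbm{1}$ and followed by $x_{i+1} \otimes \cdots \otimes x_j$, which is precisely the shape of $(B \circ \rho_{\varphi,\psi})(\mathbb{X})$. The other wrap-around sub-case, where the $\psi(\ldots)$ slot lies outside the $\varphi$-range, matches $(\rho_{\varphi,\psi} \circ B)(\mathbb{X})$: the $\mathbbm{1}$ produced by $B$ separates the $\varphi$- and $\psi$-factors. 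An explicit bijection of index tuples $(i,j,s,t)$ identifies these wrap-around terms with $-[B, \rho_{\varphi,\psi}](\mathbb{X})$.

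The main obstacle is Koszul sign bookkeeping. Each of $B$, $\LL_\varphi$, $\rho_{\varphi,\psi}$, and $B^1_\varphi$ carries signs depending on the shifted degrees $|x_i|'$, on $|\varphi|'$, $|\psi|'$, and on the cyclic-permutation data, and the above matchings work only if the overall signs conspire. I would tabulate, for each matched pair of terms, the signs $\#_1$, $\#_2$, $\#$ coming from the definitions of $\rho$, $B$, $B^1$, $\LL$, and verify that together with the prefactor $-(-1)^{|\varphi|'}$ in front of $[\LL_\varphi, B^1_\psi]$ and the sign splitting $[\varphi,\psi] = \varphi \circ \psi - (-1)^{|\varphi|'|\psi|'}\psi \circ \varphi$ all terms indeed cancel. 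This is a lengthy but direct combinatorial check.
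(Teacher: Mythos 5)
Your overall strategy --- expand every operator on a chain $\X$ and match terms by shape --- is the same direct calculation the paper performs; the paper merely organizes the bookkeeping with the Kontsevich--Soibelman brace notation $\{\cdots\}$ instead of raw index ranges. Most of your matchings agree with the paper's: the disjoint terms cancel inside the commutator, the two ``composition'' families give $B^1_{\varphi\circ\psi}-(-1)^{|\varphi|'|\psi|'}B^1_{\psi\circ\varphi}=B^1_{[\varphi,\psi]}$, and the wrap-around terms in which the $\varphi$-range swallows the $\psi(\ldots)$ slot give exactly $B\circ\rho_{\varphi,\psi}$.

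The gap is in your treatment of the remaining wrap-around terms. You propose to match the terms of $[\LL_\varphi,B^1_\psi]$ in which $\varphi$ wraps through the original $x_0$ but does \emph{not} contain the $\psi(\ldots)$ slot against $(\rho_{\varphi,\psi}\circ B)(\X)$. This cannot work, for two reasons. First, by its definition $\rho_{\varphi,\psi}$ always produces a \emph{nested} factor $\varphi(\ldots,\psi(\ldots),\ldots,x_0,\ldots,x_i)$; it never outputs $\varphi$ and $\psi$ as separate tensor factors, so no term of $\rho_{\varphi,\psi}\circ B$ has the shape you describe (``the $\mathbbm{1}$ produced by $B$ separates the $\varphi$- and $\psi$-factors''). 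Second, and more decisively, $\rho_{\varphi,\psi}\circ B=0$ on the reduced complex: $B$ places $\mathbbm{1}$ in position $0$ of its output, and every summand of $\rho_{\varphi,\psi}$ feeds position $0$ into $\varphi$, which kills the term since $\varphi$ is a reduced cochain. (This is why the paper simply writes $[B,\rho_{\varphi,\psi}]=B\circ\rho_{\varphi,\psi}=\{\mathbbm{1},\varphi\{\psi,\vin\}\}$.) The terms you are trying to place --- of shape $\{\mathbbm{1},\psi,\varphi\{\vin\}\}$ in the paper's notation --- in fact occur in \emph{both} $\LL_\varphi\circ B^1_\psi$ and $B^1_\psi\circ\LL_\varphi$ and cancel against each other inside the commutator, alongside the genuinely disjoint terms. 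With that correction (and the sign table you promise) the computation closes up; as written, your accounting leaves those terms uncancelled.
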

Now we recall the definition of $A_\infty$ structures.
\begin{definition}
A {\it strictly unital curved $A_\infty$ structure} on $\A$ is an element $m \in \hoc[1]^1$ with
\[ [m, m]=0, \ \ 
   m_2(\mathbbm{1}, x)=(-1)^{|x|}m_2(x, \mathbbm{1}), \ \ 
   m_k(\dots, \mathbbm{1}, \dots)=0 \ \ (k \not= 2).\]
Here the length $k$ part of $m$ is denoted by $m_k$.
In case $m_0=0$, $m$ is called a 
{\it strictly unital $A_\infty$ structure} on $\A$.
\end{definition}
Let $m$ be a strictly unital curved $A_{\infty}$ structure on $\A$ and set
\[b:=\LL_m, \ \ b^1_\varphi:=\rho_{m, \varphi}, \ \ \delta:=[m, \,\cdot\,].\]
Then these morphisms naturally induce morphisms on reduced Hochschild complexes, which are denoted by the same symbols.
Note that $(\delta, [\cdot,\cdot])$ makes $\rhoc[1]$ into a DGLA and $(b, \LL)$ makes $\rhoh[1]$
into a DGLA module over the DGLA $\rhoc[1]$.
By Proposition \ref{ch1} and $[m, m]=0$, we have
\begin{align}
\rho_{\delta\varphi, \psi}-b^1_{[\varphi, \psi]}+(-1)^{|\varphi|'}\rho_{\varphi, \delta\psi}&=
[b, \rho_{\varphi, \psi}]-(-1)^{|\varphi|'}[\LL_\varphi, b^1_\psi] \label{ch6.4},\\ 
b^1_{\delta \psi}+[b, b^1_\psi]&=0.\label{ch6.5}
\end{align} 
We use the following identity
(see, e.g., \cite[Definition 2.1 and Theorem 2.2]{getcar}) 
\begin{align} \label{ch6.6}
[B, b^1_\varphi]+[b, B^1_\varphi]+B^1_{\delta\varphi}+\LL_\varphi=0.
\end{align}

Now we set 
$$
L:=\rhoc[1], \quad \wt{M}:=\rhoh[1][[z]].
$$
We note that the natural $z$-linear extensions of morphisms $\LL, \rho,$ etc. are denoted by the same symbols. 
For $\varphi \in L[[z]]$
we put 
$$
d:=b+zB, \quad \io_\varphi:=b^1_\varphi+zB^1_\varphi.
$$
Then 
combining Propositions \ref{ch0}, \ref{ch1}, \ref{ch2}, 
and the equalities \eqref{ch6.1}, \eqref{ch6.2}, \eqref{ch6.3}, \eqref{ch6.4}, \eqref{ch6.5}, (\ref{ch6.6})
we obtain the following theorem: 
\begin{theorem}\label{thm:68}
The morphisms $\LL, \io, \rho$ give a CH structure on $\wt{M}$ over the DGLA $L.$
\end{theorem}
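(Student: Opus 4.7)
My plan is to apply Proposition \ref{chprop3.7}. Take $L = \rhoc[1]$ with the DGLA structure $(\delta, [\cdot,\cdot])$ supplied by Proposition \ref{ch0}, and equip $\wt{M} = \rhoh[1][[z]]$ with the differential $d := b + zB$ and the $z$-linearly extended action $\LL$ of $L[[z]]$. Take $\io_\varphi := b^1_\varphi + zB^1_\varphi$ and let $\rho$ be the $z$-linear extension of the map of \S\ref{sec:CH}. By Proposition \ref{chprop3.7} it suffices to check that $(\wt{M}, d, \LL)$ is a DGLA module over $L[[z]]$ and that the three relations \eqref{ch3.4}--\eqref{ch3.6} hold.

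\textbf{DGLA module structure.} One has $d^2 = b^2 + z[b, B] + z^2 B^2 = 0$: indeed $b^2 = 0$ because $b = \LL_m$ and $[m, m] = 0$, $B^2 = 0$ by \eqref{ch6.1}, and $[b, B] = [\LL_m, B] = 0$ by \eqref{ch6.2}. Moreover $[d, \LL_\varphi] = \LL_{\delta\varphi}$, since $[b, \LL_\varphi] = [\LL_m, \LL_\varphi] = \LL_{[m, \varphi]} = \LL_{\delta\varphi}$ by the graded Lie-module property in Proposition \ref{ch0}, and $[B, \LL_\varphi] = 0$ by \eqref{ch6.2}. Compatibility of $\LL$ with the bracket is contained in Proposition \ref{ch0}.

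\textbf{The three CH identities.} Expanding
\[
[d, \io_y] \;=\; [b, b^1_y] \;+\; z\bigl([B, b^1_y] + [b, B^1_y]\bigr) \;+\; z^2[B, B^1_y],
\]
the three summands reduce, by \eqref{ch6.5}, \eqref{ch6.6}, and \eqref{ch6.3} respectively, to $-b^1_{\delta y}$, $-zB^1_{\delta y} - z\LL_y$, and $0$. Thus $[d, \io_y] = -\io_{\delta y} - z\LL_y$, which is \eqref{ch3.4}. Similarly, $[d, \rho_{y_1, y_2}] = [b, \rho_{y_1, y_2}] + z[B, \rho_{y_1, y_2}]$: the first bracket is computed by \eqref{ch6.4} and the second by Proposition \ref{ch2}, and after regrouping $b^1_{[y_1, y_2]} + zB^1_{[y_1, y_2]} = \io_{[y_1, y_2]}$ and $[\LL_{y_1}, b^1_{y_2}] + z[\LL_{y_1}, B^1_{y_2}] = [\LL_{y_1}, \io_{y_2}]$ one recovers \eqref{ch3.5}. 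Finally, \eqref{ch3.6} is simply Proposition \ref{ch1} $z$-linearly extended to $L[[z]]$; neither $d$ nor $z$ enters.

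\textbf{The main obstacle.} The substantive content is already packaged in Propositions \ref{ch0}, \ref{ch1}, \ref{ch2} and the identities \eqref{ch6.1}--\eqref{ch6.6}, so what remains is purely bookkeeping. The one delicate point is the translation between the two degree conventions: in Section \ref{sect:CH} the symbol $|y_i|$ denotes the degree of $y_i$ in the \emph{shifted} space $L = \rhoc[1]$, while in Section \ref{sec:CH} the symbol $|\varphi|'$ denotes $|\varphi|-1$ for $\varphi \in \hoc$. Under the identification $y_i \leftrightarrow \varphi_i$, the degree of $y_i$ in $L$ equals $|\varphi_i|'$, so $(-1)^{|y_1|}$ in \eqref{ch3.5} matches the sign $(-1)^{|\varphi_1|'}$ appearing in \eqref{ch6.4} and Proposition \ref{ch2}, and likewise for the signs in \eqref{ch3.6} versus Proposition \ref{ch1}. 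Once this is tracked, every sign falls into place and the verification is mechanical.
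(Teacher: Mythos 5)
Your proposal is correct and follows essentially the same route as the paper's own proof: reduce to Proposition \ref{chprop3.7}, verify the DGLA module structure from Proposition \ref{ch0} together with \eqref{ch6.1}, \eqref{ch6.2} and $[m,m]=0$, and then check \eqref{ch3.4}, \eqref{ch3.5}, \eqref{ch3.6} by expanding in powers of $z$ and invoking \eqref{ch6.3}--\eqref{ch6.6} and Propositions \ref{ch1}, \ref{ch2} exactly as you do. Your remark matching the shifted degree $|y|$ on $L=\rhoc[1]$ with $|\varphi|'$ is a correct and worthwhile clarification that the paper leaves implicit.
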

\begin{proof}
Using Proposition \ref{ch0}, the equalities \eqref{ch6.1}, \eqref{ch6.2}, and $[m, m]=0$, we see that 
$(\wt{M}, d, \LL)$ is a DGLA module over the DGLA 
$L[[z]],$ where the DGLA structure on $L$ is $z$-linearly extended to $L[[z]].$
By Proposition \ref{chprop3.7}, it is sufficient to show the equalities \eqref{ch3.4}), \eqref{ch3.5}, (\ref{ch3.6}). 
We note that the equality \eqref{ch3.6} is a direct consequence of Proposition \ref{ch1}.
We check \eqref{ch3.4} and \eqref{ch3.5}.
Since
\begin{align*}
&[d, \io_\varphi]+\io_{\delta \varphi}+z\LL_\varphi 
  =[b, b^1_\varphi]+b^1_{\delta \varphi}
  +z([b, B^1_\varphi]+[B, b^1_\varphi]+B^1_{\delta \varphi}+\LL_\varphi)
  +z^2[B, B^1_\varphi],
\end{align*}
the equality \eqref{ch3.4} follows from \eqref{ch6.3}, \eqref{ch6.5}, \eqref{ch6.6}.
Since
\begin{align*}  
&\io_{[\varphi, \psi]}\!-(-1)^{|\varphi|'}\![\LL_{\varphi}, \io_{\psi}]+
        [d, \rho_{\varphi, \psi}]-\rho_{\delta \varphi, \psi}-(-1)^{|\varphi|'}\!\rho_{\varphi, \delta \psi}\\
=&b^1_{[\varphi, \psi]}\!-(-1)^{|\varphi|'}\![\LL_\varphi, b^1_\psi]+[b, \rho_{\varphi, \psi}]
  -\rho_{\delta\varphi, \psi}-(-1)^{|\varphi|'}\!\rho_{\varphi, \delta\psi}
+\!z(B^1_{[\varphi, \psi]}\!-\!(-1)^{|\varphi|'}\![\LL_\varphi, B^1_\psi]+[B, \rho_{\varphi, \psi}]),         
\end{align*}
the equality \eqref{ch3.5} follows from \eqref{ch6.4} and Proposition \ref{ch2}.
\end{proof}
\begin{definition}
The cohomology of $(\rhoh[[z]], b+zB)$ is called a {\it negative cyclic homology} and the cohomology of 
$(\rhoh((z)), b+zB)$ is called a {\it periodic cyclic homology}.
\end{definition}

%%%%%%%%%%%%%%%%%%%%%%%%%%% NORMED  %%%%%%%%%%%%%%%%%%%%%%%%%%%%%%%%%%%%%%%%%%%%%%
\begin{remark}\label{filainf}
Let $\ke$ be an ungraded field and $R$ be a graded algebra over $\ke^e =\ke ((e))$. 
Note that $\ke$ and $\ke^e$ are equipped with the trivial norms.
Assume that $R$ is equipped with a norm $\no{\cdot}$ such that $R$ is a graded complete normed algebra over $\ke^e$, i.e., $\no{\cdot}$ is a ring norm and a $\ke^e$ module norm. 
We also assume $\no{R} \le 1$.
Let $m$ be a strictly unital $A_{\infty}$ structure on $\A$ over $\ke$.
Then $m$ gives a DGLA structure on $\rhoc[1]$ and this DGLA structure naturally extends to 
$\rhoc[1] \widehat{\otimes}_\ke R,$ where $\rhoc[1]$ is considered as a graded $\ke$ module with the trivial norm.
In this note, we define a {\it gapped filtered graded $\ainf$ structure on $\A$ by a Maurer-Cartan element 
$m_+ \in \rhoc[1] \hat{\otimes}_\ke R$ with $\no{m_+} <1$
and} we call such a triple $(\A, m, m_+)$ a 
{\it gapped filtered graded $\ainf$ category over $R$}. 
See \cite[Definition 3.2.26]{fooo1} for the definition of gappedness of a filtered 
$\ainf$ algebra over the universal Novikov ring, and see also \cite[Definition 3.7.5]{fooo1}.
\end{remark}
\begin{remark}\label{filcon}
We consider the same setting as in Remark \ref{filainf}.
Let $(\A, m, m_+)$ be a gapped filtered graded $\ainf$ category over $R$ as above.
Then the trivial connection $\nabla$ on the normed DGLA $\rhoc[1] \widehat{\otimes}_{\ke} 
R$
is an $\LI$ connection and 
the trivial ``connection'' $\wt{\nabla}$ on the normed CH module 
$(\rhoh[1] \widehat{\otimes}_\ke R)[[z]]$ 
is a CH connection.
Here the normed DGLA structure and the normed CH structure are determined by $m$ (not twisted by the Maurer-Cartan element $m_+$).
Since $m_+$ is a Maurer-Cartan element of $\rhoc[1]\widehat{\otimes}_\ke R$, we can consider the Euler connection
on the cohomology of the ($m_+$-twisted) normed CH module 
$(\rhoh[1] \widehat{\otimes}_\ke R)[[z]]^{m_+}$.
See Subsection \ref{MC} for this notation. 
In a way similar to \cite{getcar}, we can prove flatness of $\nabla^\ggm$ on cohomology and 
also flatness of $\nabla^\eul$ by Proposition \ref{flat}.
\end{remark}

%%%%%%%%%%%%%%%%%%%%%%%%%%%% PROOF OF CH %%%%%%%%%%%%%%%%%%%%%%%%%%%%%%%%%%%%%%%%%%%
\subsection{Proofs of Propositions \ref{ch1} and \ref{ch2}}\label{proof} 
Propositions \ref{ch1} and \ref{ch2} follow by direct calculation.
To write down the proofs, we use operadic notation.
In \cite{konnot}, they introduce a 2-colored operad called the Kontsevich-Soibelman operad
and show that this operad naturally acts on Hochschild invariants
(see also \cite{dolfor}, \cite{wilhom}).
In this article, we will follow \cite[\S 3]{wilhom}.
They use ``trees'' to describe the operadic structure. 
We use parentheses notation to describe the operadic structure.
We note that there is a one to one correspondence between trees and parentheses.
To simplify notation, the special vertex $\vout$ is omitted, i.e.,$\{\vout\{\cdots\}\}$ is denoted by $\{\cdots\}$ (see also Example \ref{6.9} below). 
\begin{remark}
Precisely, we consider the degree one shift of the Kontsevich-Soibelman operad which acts on 
the pair $(\rhoc[1], \rhoh[1]).$
The $\rhoc[1]$ part of this structure is closely related to the brace algebra \cite{gerhom}. 
\end{remark}
\begin{example}\label{6.9}
\begin{align*}
\varphi \circ\psi=\varphi\{\psi\},\ 
\LL_\varphi=\{\vin, \varphi\}+\{\varphi\{\vin\}\},\ 
\rho_{\varphi,\psi}=\{\varphi\{\psi, \vin\}\},\ 
B=\{\mathbbm{1}, \vin \},\ 
B^1_\varphi=\{\mathbbm{1}, \varphi, \vin\}.
\end{align*}
\end{example}
\par\noindent
{\it Proof of Proposition \ref{ch1}.} 
Proposition $\ref{ch1}$ easily follows from the next lemma.
\begin{lemma}
\begin{align*}
 \rho_{\varphi_1 \circ \varphi_2, \psi}
 -\rho_{\varphi_1, [\varphi_2, \psi]}
 +(-1)^{|\varphi_1|'|\varphi_2|'} [\LL_{\varphi_2}, \rho_{\varphi_1, \psi}]
 =\{\varphi_1\{\varphi_2\{\psi, \vin\}\}\}
 +(-1)^{|\varphi_1|'|\varphi_2|'}\{\varphi_2\{\varphi_1\{\psi, \vin\}\}\}
\end{align*}
\end{lemma}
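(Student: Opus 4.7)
The plan is to expand every term on the left-hand side in the brace (operadic) notation from Example \ref{6.9} and match the result with the right-hand side. The proof is essentially a direct manipulation inside a brace/Kontsevich--Soibelman algebra.

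First I would rewrite the three terms on the left using the definitions:
\begin{align*}
\rho_{\varphi_1\circ\varphi_2,\psi}&=\{\varphi_1\{\varphi_2\}\{\psi,\vin\}\},\\
\rho_{\varphi_1,[\varphi_2,\psi]}&=\{\varphi_1\{\varphi_2\{\psi\},\vin\}\}-(-1)^{|\varphi_2|'|\psi|'}\{\varphi_1\{\psi\{\varphi_2\},\vin\}\},\\
[\LL_{\varphi_2},\rho_{\varphi_1,\psi}]&=\bigl[\{\vin,\varphi_2\}+\{\varphi_2\{\vin\}\},\,\{\varphi_1\{\psi,\vin\}\}\bigr].
\end{align*}
The first formula needs the higher brace relation (a.k.a.\ pre-Jacobi identity)
\[
x\{y\}\{z_1,\ldots,z_n\}=\sum_{0\le i\le j\le n}\pm\,x\{z_1,\ldots,z_i,\,y\{z_{i+1},\ldots,z_j\},\,z_{j+1},\ldots,z_n\},
\]
which is the fundamental compatibility of the Kontsevich--Soibelman operad. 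Applied with $x=\varphi_1$, $y=\varphi_2$, and $(z_1,z_2)=(\psi,\vin)$, this expresses $\{\varphi_1\{\varphi_2\}\{\psi,\vin\}\}$ as a sum of six terms classified by where $\varphi_2$ absorbs $\psi$ and $\vin$: the outer term $\{\varphi_1\{\varphi_2\{\psi,\vin\}\}\}$, the two partial-absorption terms $\{\varphi_1\{\varphi_2\{\psi\},\vin\}\}$ and $\{\varphi_1\{\psi,\varphi_2\{\vin\}\}\}$, and three "pure insertion" terms $\{\varphi_1\{\varphi_2,\psi,\vin\}\}$, $\{\varphi_1\{\psi,\varphi_2,\vin\}\}$, $\{\varphi_1\{\psi,\vin,\varphi_2\}\}$.

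Second, I would expand the commutator $[\LL_{\varphi_2},\rho_{\varphi_1,\psi}]$ by letting each of the two summands of $\LL_{\varphi_2}$ act on $\{\varphi_1\{\psi,\vin\}\}$ and subtracting the reverse order with the Koszul sign $(-1)^{|\varphi_2|'(|\varphi_1|'+|\psi|'+|\vin|')}$. The summand $\{\varphi_2\{\vin\}\}$ acts on the $\vin$-slot of $\rho_{\varphi_1,\psi}$, while $\{\vin,\varphi_2\}$ acts by either inserting $\varphi_2$ as an extra argument of $\varphi_1$ or pre-composing on the outside. In the reverse order (acting $\rho_{\varphi_1,\psi}$ on $\LL_{\varphi_2}$) only the outside composition survives; this yields the term $(-1)^{|\varphi_1|'|\varphi_2|'}\{\varphi_2\{\varphi_1\{\psi,\vin\}\}\}$.

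Third, I would collect. The three "pure insertion" terms from step~1 cancel exactly with the three terms from $[\{\vin,\varphi_2\},\{\varphi_1\{\psi,\vin\}\}]$ in which $\varphi_2$ is inserted as an additional argument of $\varphi_1$ alongside $\psi$ and $\vin$. The two partial-absorption terms from step~1 cancel with the expansion of $\rho_{\varphi_1,[\varphi_2,\psi]}$ after using that $\{\varphi_1\{\psi,\varphi_2\{\vin\}\}\}$ is what comes out of $[\{\varphi_2\{\vin\}\},\{\varphi_1\{\psi,\vin\}\}]$ modulo outside terms; here one uses $\psi\{\varphi_2\}=\varphi_2\{\vin\}$-type identifications carried through via the brace relations. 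The surviving terms are exactly $\{\varphi_1\{\varphi_2\{\psi,\vin\}\}\}$ (from step~1) and $(-1)^{|\varphi_1|'|\varphi_2|'}\{\varphi_2\{\varphi_1\{\psi,\vin\}\}\}$ (from the reverse-order commutator), matching the right-hand side.

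The main obstacle is purely bookkeeping: sorting the six (resp.\ many) terms produced by the brace relation and keeping Koszul signs consistent under the shift $|\cdot|'=|\cdot|-1$. I would fix a convention once and recompute each sign as the shuffle sign of the permutation that moves $\varphi_2$ (or $\vin$) past the intermediate arguments, and verify the cancellations pairwise. No deeper structural input is needed beyond the pre-Jacobi identity of the Kontsevich--Soibelman operad recalled in \cite{wilhom}.
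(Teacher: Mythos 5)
Your proposal is correct and follows essentially the same route as the paper: both expand $\rho_{\varphi_1\circ\varphi_2,\psi}$, $\rho_{\varphi_1,[\varphi_2,\psi]}$ and the commutator $[\LL_{\varphi_2},\rho_{\varphi_1,\psi}]$ term by term in brace notation and cancel pairwise, the only cosmetic difference being that you package the expansion of $(\varphi_1\{\varphi_2\})\{\psi,\vin\}$ as the pre-Jacobi identity of the brace operad while the paper obtains the same six terms by direct calculation from the definition of $\rho$. One small bookkeeping slip: the partial-absorption term $\{\varphi_1\{\psi,\varphi_2\{\vin\}\}\}$ cancels against the $\{\varphi_2\{\vin\}\}$-part of the commutator rather than against $\rho_{\varphi_1,[\varphi_2,\psi]}$, whose $\psi\{\varphi_2\}$-term instead cancels against the corresponding term of $\rho_{\varphi_1,\psi}\circ\LL_{\varphi_2}$; this does not affect the validity of the strategy.
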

\begin{proof}
By the definition, we have 
\begin{align*}
 \rho_{\varphi_1, \varphi_2 \circ \psi}=\{\varphi_1\{\varphi_2\{\psi\}, \vin\}\}, \ \ 
 \rho_{\varphi_1, \psi \circ \varphi_2}=\{\varphi_1\{\psi\{\varphi_2\}, \vin\}\}.
\end{align*}
By direct calculation, we see that
\begin{align*}
 \rho_{\varphi_1 \circ \varphi_2, \psi}
&=\{\varphi_1\{\varphi_2, \psi, \vin\}\}
 +(-1)^{|\varphi_2|'|\psi|'}(\{\varphi_1\{\psi, \varphi_2, \vin\}\}
 +\{\varphi_1\{\psi, \vin, \varphi_2\}\}
 +\{\varphi_1\{\psi, \varphi_2\{\vin\}\}\}) \\
&+\{\varphi_1\{\varphi_2\{\psi\}, \vin\}\}
 +\{\varphi_1\{\varphi_2\{\psi, \vin\}\}\}. \\
 \rho_{\varphi_1, \psi} \circ \LL_{\varphi_2}
&=\{\varphi_1\{\psi, \vin\}, \varphi_2\}
  +\{\varphi_1\{\psi\{\varphi_2\}, \vin\}\} \\
&+\{\varphi_1\{\psi, \varphi_2, \vin\}\}
  +\{\varphi_1\{\psi, \vin, \varphi_2\}\}
  +(-1)^{|\varphi_2|'|\psi|'}\{\varphi_1\{\varphi_2, \psi, \vin\}\}
  +\{\varphi_1\{\psi, \varphi_2\{\vin\}\}\}. \\
\LL_{\varphi_2} \circ\rho_{\varphi_1, \psi}
&=(-1)^{|\varphi_1|'|\varphi_2|'+|\varphi_2|'|\psi|'}\{\varphi_1\{\psi, \vin\}, \varphi_2\}
  +\{\varphi_2\{\varphi_1\{\psi, \vin\}\}\}.
\end{align*}
This lemma follows from these equations.
Note that $\LL_{\varphi_2}$ is degree $|\varphi_2|'$ and $\rho_{\varphi_1, \psi}$ is degree 
$|\varphi_1|'+|\psi|'.$
We also note that in this calculation we do not need to assume $\varphi_1, \varphi_2, \psi$ are reduced cochains. 
\end{proof}
We next prove Proposition \ref{ch2}.
\begin{proof}[Proof of Proposition \ref{ch2}]
By the definition, we have 
\begin{align*}
[B, \rho_{\varphi, \psi}]=B \circ \rho_{\varphi, \psi}=\{\mathbbm{1}, \varphi\{\psi, \vin\}\},\ \ 
B^1_{\varphi \circ \psi}=\{\mathbbm{1}, \varphi\{\psi\}, \vin\},\ \ 
B^1_{\psi \circ \varphi}=\{\mathbbm{1}, \psi\{\varphi\}, \vin\}.
\end{align*}
By direct calculation, we have
\begin{align*}
B^1_\psi \circ \LL_\varphi
&=\{ \mathbbm{1},  \psi, \varphi \{\vin\} \}  \\
&+\{ \mathbbm{1},  \psi, \vin, \varphi \} 
  +\{ \mathbbm{1},  \psi, \varphi, \vin \}
  +(-1)^{|\varphi|'|\psi|'} \{ \mathbbm{1}, \varphi, \psi, \vin\}
  +\{ \mathbbm{1},  \psi \{ \varphi \}, \vin \},\\
\LL_\varphi \circ B^1_\psi
&=(-1)^{|\varphi|'}(\{ \mathbbm{1},  \varphi, \psi, \vin\}
  +\{\mathbbm{1}, \varphi\{\psi, \vin\}\}
  +\{\mathbbm{1}, \varphi\{\psi\}, \vin\}) \\
&+(-1)^{|\varphi|'+|\varphi|'|\psi|'}(\{ \mathbbm{1},  \psi, \varphi \{\vin\} \} 
  +\{ \mathbbm{1}, \psi, \vin, \varphi \} 
  +\{ \mathbbm{1}, \psi, \varphi, \vin \}).  
\end{align*}
Since $B^1_\psi$ is degree $|\psi|'+1$, we have 
\[ (-1)^{|\varphi|'}[\LL_\varphi, B^1_\psi]
  =\{\mathbbm{1}, \varphi\{\psi, \vin\}\}
  +\{\mathbbm{1}, \varphi\{\psi\}, \vin\}
  -(-1)^{|\varphi|'|\psi|'} \{ \mathbbm{1},  \psi \{ \varphi \}, \vin \}.\]
The proposition follows from these equations.  
\end{proof}

%%%%%%%%%%%%%%%%%%%%%%%%%%%% PRIMITIVE FORM %%%%%%%%%%%%%%%%%%%%%%%%%%%%%%%%%%%%

\section{Primitive forms without higher residue pairings}\label{prim}
In this section, we briefly recall a part of the definition of the primitive forms \cite{saiper}.
Let $\ke$ be a field (ungraded) and $R$ be a graded algebra over the graded ring $\ke^e :=\ke((e))$.
Let $\wt{M}$ be a finitely generated free graded $R[[z]]$ module equipped with an integrable (flat) meromorphic connection 
\[\nabla : \der_\ke \big{(}R((z))\big{)} \to 
 \End_\ke \big{(}\wt{M}((z)) \big{)},\]
where $\wt{M}((z)):=\wt{M}\otimes_{R[[z]]}R((z))$. 
We assume that $\nabla$ has pole order at most $2$ in the $z$-direction and pole order at most $1$ in another direction 
(i.e., $z\nabla_X m \in \wt{M}$ for $X \in \der_\ke(R)$ and $m \in \wt{M}$).
An element $\zeta \in \wt{M}^r$ is said to be {\it primitive} if the morphism 
\[ [z\nabla\zeta] : \der_{\ke^e}(R) \to (\wt{M}/z\wt{M})[r+2]\]
is an isomorphism as $R$ modules.
If $\zeta \in \wt{M}^r$ is primitive, then $z\nabla\zeta : \der_{\ke^e} (R)[[z]] \to \wt{M}[r+2]$
is also an isomorphism as $R[[z]]$ modules.  
By this isomorphism, $\nabla$ gives a meromorphic connection on $\der_{\ke^e}(R)((z))$
which is also denoted by $\nabla.$
For the Euler vector field $E \in \der^0_\ke(R)$, 
there exists a unique element $E' \in \der^0_{\ke^e}(R)$
such that $[z\nabla_E \zeta]=[z \nabla_{E'} \zeta].$
By abuse of  notation, this vector field $E'$ is also denoted by $E$ and called an Euler vector field.
\begin{definition}[{cf. \cite[Definition 6.1, (P1), (P3), (P4), (P5)]{saifro}}]
An element $\zeta \in \wt{M}^r$ is called a {\it primitive form without higher residue pairing} if 
\begin{itemize}
\item $\zeta$ is primitive.
\item $z \nabla_\dz\zeta=-\nabla_E\zeta+\frac{r}{2}\zeta$.
\item $z\nabla_XY \in \der_{\ke^e}(R) \oplus z\der_{\ke^e}(R)$ for all $X, Y \in \der_{\ke^e}(R).$
\item $z^2\nabla_\frac{d}{dz}Y \in  \der_{\ke^e}(R) \oplus z\der_{\ke^e}(R) \oplus z^2\der_{\ke^e}(R)$ 
          for all $Y \in \der_{\ke^e}(R).$
\end{itemize} 
\end{definition}
\begin{remark}
In this note, there is no specific reason to exclude the parts related to higher residue parings 
from the definition of the primitive forms.
\end{remark}
\begin{remark}({\bf Continued from Remarks \ref{filainf} and \ref{filcon}}.)
Let $(\A, m, m_+)$ be a gapped filtered graded $\ainf$ category over $R$
as in Remark \ref{filainf}. 
We define a graded $R[[z]]$ module $\wt{M}$ by the cohomology of the normed CH module $\rhoh[1]\widehat{\otimes}_\ke R)[[z]]^{m_+}.$
The graded $R[[z]]$ module $\wt{M}$ is equipped with the Euler connection $\nabla^\eul$.
   If $\wt{M}$ is finitely generated and free over $R[[z]]$, then we can consider primitive forms without higher residue pairings on $(\wt{M}, \nabla^\eul)$.
We note that, in this categorical setting, primitive forms are closely related to (smooth or proper) Calabi-Yau structures (see, e.g., \cite{ganmir}).   
\end{remark}
\begin{example}
({\bf Continued from Example \ref{5.20}}.)
An element $\mathbbm{1}$ in $A^0$ is called a unit $($of the hyper commutative algebra $A^e$$)$
if \[(\mathbbm{1}, x_1, \dots, x_k)=\begin{cases} x_1 \ &k=1, \\ 0 \ &k \ge 2.  \end{cases}\]
Suppose that $A^e$ has a unit $\mathbbm{1}$, then $\mathbbm{1}$ is also a unit of $A^e[[t]].$
Since $z\nabla^\eul_{\frac{\partial}{\partial t_i}} \mathbbm{1}=T_i$, we see that $\mathbbm{1}$ is primitive $($with respect to the Euler connection$)$ and ${\frac{\partial}{\partial t_i}}$ is identified with $T_i.$
Hence $\mathbbm{1}$ is a primitive form without a higher residue pairing.
\end{example}

%\bibliographystyle{plain}   
%\bibliography{Econnection}
%%%%%%%%%%%

\end{document}